\theoremstyle{definition}
\newtheorem{theorem}{Theorem}[section]
\newtheorem*{theorem*}{Theorem}
\newtheorem{proposition}[theorem]{Proposition}
\newtheorem{lemma}[theorem]{Lemma}
\newtheorem{corollary}[theorem]{Corollary}
\newtheorem{conjecture}[theorem]{Conjecture}
\newtheorem{definition}[theorem]{Definition}
\newtheorem{example}[theorem]{Example}
\theoremstyle{remark}
\newtheorem{remark}[theorem]{Remark}
\numberwithin{equation}{section}
\newcommand{\Sym}{\ensuremath{\operatorname{Sym}}}
\newcommand{\eps}{\varepsilon}
\newcommand{\ups}{\upsilon}
\newcommand{\p}{P}
\newcommand{\dju}{\oplus} 
\newcommand{\grad}{\nabla}
\newcommand{\ndiv}{\partial}
\newcommand{\sgn}{\operatorname{sgn}}
\newcommand{\nL}{\mathcal L}
\newcommand{\cut}{\operatorname{Cut}}
\newcommand{\cyc}{\operatorname{Cyc}}
\newcommand{\ex}{\mathbb E}
\newcommand{\ind}{1}
\newcommand{\dire}{\vec} 
\newcommand{\g}{g}
\newcommand{\indb}{\mathbf 1}
\DeclareMathOperator{\im}{im}
\newlength\cellsize \setlength\cellsize{15\unitlength}
\newcommand\cellify[1]{\def\thearg{#1}\def\nothing{}%
\ifx\thearg\nothing
\vrule width0pt height\cellsize depth0pt\else
\hbox to 0pt{\usebox2\hss}\fi%
\vbox to 15\unitlength{
\vss
\hbox to 15\unitlength{\hss$#1$\hss}
\vss}}
\newcommand\tableau[1]{\vtop{\let\\=\cr
\setlength\baselineskip{-16000pt}
\setlength\lineskiplimit{16000pt}
\setlength\lineskip{0pt}
\halign{&\cellify{##}\cr#1\crcr}}}
\newcommand\expath[1]{%
\hbox to 0pt{\usebox3\hss}%
\vbox to 15\unitlength{
\vss
\hbox to 15\unitlength{\hss$#1$\hss}
\vss}}
\newcommand\bas[1]{\omit \vbox to \cellsize{ \vss \hbox to \cellsize{\hss$#1$\hss} \vss}}
\begin{document}

\title[Deletion-contraction for a unified Laplacian and applications]{Deletion-contraction for a unified Laplacian\\ and applications}

\author{Farid Aliniaeifard}
\address{
 Department of Mathematics,
 University of British Columbia,
 Vancouver BC V6T 1Z2, Canada}
\email{farid@math.ubc.ca}

\author{Victor Wang}
\address{
 Department of Mathematics,
 University of British Columbia,
 Vancouver BC V6T 1Z2, Canada}
\email{vyzwang@student.ubc.ca}

\author{Stephanie van Willigenburg}
\address{
 Department of Mathematics,
 University of British Columbia,
 Vancouver BC V6T 1Z2, Canada}
\email{steph@math.ubc.ca}

\thanks{
All authors were supported  in part by the National Sciences and Engineering Research Council of Canada.}
\subjclass[2010]{05C22, 05C31, 05C50, 05C60}
\keywords{deletion-contraction, interlacing, Laplacian matrix, spanning forest, weighted graph}

\begin{abstract} 
We define a graph Laplacian with vertex weights in addition to the more classical edge weights, which unifies the combinatorial Laplacian and the normalised Laplacian. Moreover, we give a combinatorial interpretation for the coefficients of the weighted Laplacian characteristic polynomial in terms of weighted spanning forests and use this to prove a deletion-contraction relation. We prove various interlacing theorems relating to deletion and contraction, as well as to rectangular tilings, drawing on the work of Brooks, Smith, Stone and Tutte on square tilings. Additionally, we show that the weighted Laplacian also satisfies a vertex analogue of deletion-contraction. We give applications of weighted Laplacian eigenvalues to sparse cuts, independent sets and graph colouring, and establish new cases of a conjecture of Stanley on distinguishing nonisomorphic trees.
\end{abstract}

\maketitle
\tableofcontents

\section{Introduction}\label{sec:intro}  
The combinatorial Laplacian matrix $L_G=D_G-A_G$ of a graph $G$, defined by the difference between the degree and adjacency matrices of $G$, was perhaps first studied by Kirchhoff \cite{Kir} in his 1847 paper on electrical networks. In the same paper, Kirchhoff discovered the celebrated matrix-tree theorem, which states that the number of spanning trees of a graph $G$ is equal to any cofactor of $L_G$. Intriguingly, the matrix-tree theorem also arises in the work of Brooks, Smith, Stone and Tutte \cite{BSST} on square tilings, which they also found were related to electrical circuits.

In the 1990's, Chung \cite{ChungBook} popularised the normalised Laplacian matrix, sometimes defined by $\nL_G=D_G^{-1}L_G$, which has connections to random walks as $\nL_G = I-D_G^{-1}A_G$ where $D_G^{-1}A_G$ is the random walk matrix. The combinatorial and normalised Laplacians may be viewed as discrete analogues of the (negative) Laplace-Beltrami operator on Riemannian manifolds, defined by the negative divergence of the gradient, and this connection has been used to relate the second combinatorial and normalised Laplacian eigenvalues of a graph to sparse (normalised) cuts \cite{ChungBook, Dodziuk}, giving a discrete analogue of Cheeger's isoperimetric inequality in Riemannian geometry \cite{Cheeger}. In addition, combinatorial and normalised Laplacian eigenvalues can be used to obtain bounds on various parameters of a graph, including its chromatic number \cite{ChungBook,CGP}, diameter \cite{ChungBook,Mohar} and independence number \cite{GN}.

Another topic of interest in graph theory is the notion of deletion-contraction. Many graph polynomials and invariants, including number of acyclic orientations \cite{Stan73}, spanning trees and spanning forests, as well as a graph's chromatic and flow polynomials \cite{TutChrom}, satisfy a recursive formula relating a graph and the two graphs obtained by deleting and contracting an edge. The combinatorial and normalised Laplacian characteristic polynomials, however, are not known to satisfy a deletion-contraction recursion. The effects of edge deletion, contraction and related operations on combinatorial and normalised Laplacian eigenvalues are studied across \cite{Butler, CDHLPS, GMS}.

The definitions of the combinatorial and normalised Laplacians can be naturally extended to edge-weighted graphs $(G,\eps)$, and these have been studied by several authors, including by Kirchhoff in his original paper \cite{Kir}, where the edge weights represent the conductances of wires. In contrast, graph Laplacians with vertex weights are not so commonly studied, and there exist competing definitions for such a matrix \cite{CL, FT, Lovasz}. Vertex-weighted graphs appear in the work of Noble and Welsh \cite{NW}, in which they define a polynomial associated to integer vertex-weighted graphs satisfying a deletion-contraction rule, motivated by the study of Vassiliev invariants of knots \cite{CDL}. Vertex weights also arise in graph theory applications; when studying independent sets in job scheduling, the weight of a vertex can be used to represent the relative reward for the completion of each job.

Our paper defines a weighted Laplacian for graphs with both vertex and edge weights, which specialises to the combinatorial and normalised Laplacians for certain choices of vertex weights, unifying the discussion of the two Laplacian matrices. Moreover, the incorporation of vertex weights gives rise to a deletion-contraction recursion for the weighted Laplacian characteristic polynomial. Our paper is more precisely structured as follows. 

We cover the necessary background in Section~\ref{sec:bg}. In Section~\ref{sec:calc} we introduce the inner product spaces $L^2(V,\ups)$ and $L^2(E,\eps)$ of scalar fields and vector fields, respectively, associated with a weighted graph $(G,\ups,\eps)$, and define discrete analogues of negative divergence and gradient. We use these to define the weighted Laplacian $L_{(G,\ups,\eps)}$ and edge Laplacian $K_{(G,\ups,\eps)}$ in Definitions~\ref{def:wlap} and \ref{def:elap}, and briefly develop their theory. Then in Section~\ref{sec:delcon} we give a combinatorial interpretation of weighted Laplacian characteristic polynomial coefficients in terms of weighted spanning forests in Theorem~\ref{the:mforest}, and deduce a deletion-contraction recurrence in Theorem~\ref{the:delcon}. In Section~\ref{sec:interlace} we use deletion-contraction to obtain an interlacing theorem relating the weighted Laplacian eigenvalues of a weighted graph and those obtained by deleting and contracting an edge in Theorem~\ref{the:conlace}. We then apply our techniques to obtain various interlacing results on combinatorial and normalised Laplacian eigenvalues of quotient graphs in Corollaries~\ref{cor:cquot} and \ref{cor:nquot}, as well as graphs obtained by subgraph deletion in Corollaries~\ref{cor:subgraph} and \ref{cor:nsubgraph}. In Section~\ref{sec:elec} we study interlacing theorems arising from rectangular tilings and Kron reduction in Theorems~\ref{the:rect} and \ref{the:kron}, and prove a vertex analogue of deletion-contraction in Theorem~\ref{the:addred}, which we call addition-reduction. We give applications of weighted Laplacian eigenvalues in Section~\ref{sec:bounds}, proving an isoperimetric inequality in Corollary~\ref{cor:cheeger} and bounds on independent sets and the chromatic number in Theorems~\ref{the:indep} and \ref{the:chrom}. Finally, in Section~\ref{sec:csf} we use deletion-contraction to relate the weighted Laplacian characteristic polynomial to the chromatic symmetric function and other graph polynomials, and use this to establish new cases of Stanley's tree isomorphism conjecture in Corollary~\ref{cor:newtrees}.

\section{Background}\label{sec:bg}
A \textit{graph} $G$ consists of a vertex set $V(G)$ and an edge set $E(G)$ representing connections between pairs of vertices. When the underlying graph $G$ is clear, we may denote the vertex and edge sets of a graph simply by $V$ and $E$, respectively. All graphs in this paper will be \textit{finite multigraphs}, with finite vertex and edge sets, the latter of which may include \textit{loops} (edges connecting some vertex to itself) or \textit{multiple edges} (two or more edges incident to the same pair of vertices). A graph is \textit{simple} if it has no loops or multiple edges.

For $u,v\in V$, we write $uv$ to mean an edge connecting $u$ and $v$. A \textit{directed edge} consists of an edge together with a choice of one of two orientations; an edge $uv$ may either be oriented from $u$ to $v$, with corresponding directed edge written $\vec {uv}$, or from $v$ to $u$, with corresponding directed edge written $\vec {vu}$. The set of directed edges of a graph $G$ is denoted $\dire E(G)$, or $\dire E$ when the underlying graph $G$ is clear.

If $e$ is an edge not in $E$, we write $G+e$ to denote the graph obtained by adding $e$ to the edge set of $G$. We write $G-S$ for $S\subseteq V$ to denote the graph obtained from $G$ by removing all vertices in $S$ and any incident edges. Similarly, for $R\subseteq E$, we write $G-R$ to denote the graph obtained from $G$ by removing the edges in $R$. When $R=\{e\}$, we may also denote this by $G-e$. Additionally, we write $G[R]$ for $R\subseteq E$ to denote the subgraph of $G$ \textit{induced} by $R$, consisting of the edges in $R$ and all incident vertices.

A \textit{weighted graph} $(G,\ups,\eps)$ consists of a graph $G$ together with \textit{vertex weights} and \textit{edge weights} given by weight functions $\ups:V\to \mathbb R_{>0}$ and $\eps:E\to\mathbb R_{>0}$, respectively. An \textit{edge-weighted graph} $(G,\eps)$ is a weighted graph in which vertex weights are negligible and taken to equal $1$. Similarly, a \textit{vertex-weighted graph} $(G,\ups)$ is a weighted graph in which edge weights are negligible and taken to equal $1$. We may also write $G$ to denote an \textit{unweighted graph}, in which all vertex and edge weights are taken to equal $1$.

If $G$ and $G'$ are two graphs, we write $G\dju G'$ to denote their disjoint union. Similarly, if $(G,\ups,\eps)$ and $(G',\ups',\eps')$ are two weighted graphs, we denote their disjoint union by $(G,\ups,\eps)\dju(G',\ups',\eps')$.

Given a graph $G$ and an equivalence relation $\sim$ on its vertex set $V$, the \textit{quotient graph} $G/\sim$ is the graph obtained by formally identifying the vertices in each equivalence class of $\sim$. If $(G,\ups,\eps)$ is a weighted graph, the \textit{quotient weighted graph} $(G,\ups,\eps)/\sim$ is $(G/\sim,\ups/\sim,\eps)$, where the weight under $\ups/\sim$ of an element of $V/\sim$ is the sum of the weights under $\ups$ of the vertices in the equivalence class of $V$. For $S\subseteq V$, we write $S^c$ to denote its \textit{complement} $V\setminus S$.

Given an edge-weighted graph $(G,\eps)$ and two vertices $u,v$, we write $\eps(u,v)$ to denote the total edge weight between $u$ and $v$, with loops counted twice. The \textit{degree} $d_{(G,\eps)}(u)$ of a vertex $u$ is given by the sum $\sum_{v\in V}\eps(u,v)$. An \textit{isolated vertex} is a vertex of degree zero. If $(G,\eps)$ is an edge-weighted graph with vertices arbitrarily ordered as $v_1,\dots,v_n$, the \textit{adjacency matrix} of $(G,\eps)$ is the $n\times n$ matrix $(a_{ij})$, where each $a_{ij}=\eps(v_i,v_j)$, and the \textit{degree matrix} $D_{(G,\eps)}$ is the $n\times n$ diagonal matrix with $i$th diagonal entry given by $d_{(G,\eps)}(v_i)$.

The study of spectral graph theory relates the eigenvalues of matrices associated to graphs and weighted graphs to graph parameters. Two important matrices associated to an edge-weighted graph are the combinatorial Laplacian and the normalised Laplacian.

\begin{definition}
The \textit{combinatorial Laplacian} of an edge-weighted multigraph $(G,\eps)$ with vertex set $\{v_1,\dots,v_n\}$ is
$$L_{(G,\eps)}= D_{(G,\eps)}-A_{(G,\eps)}.$$
\end{definition}

One application of the combinatorial Laplacian is the celebrated matrix-tree theorem. We state a version on principal minors of the combinatorial Laplacian for edge-weighted graphs, which may be deduced from the all minors matrix-tree theorem in \cite{Chaiken}. We will see that each principal minor of $L_{(G,\eps)}$ has a combinatorial interpretation in terms of \textit{rooted spanning forests}, which consist of a spanning forest of $G$, together with a choice of a vertex from each connected component of the spanning forest, constituting the \textit{roots}. A spanning forest is \textit{$k$-rooted} if it has $k$ roots, and \textit{$S$-rooted} for $S\subseteq V$ if its roots are given by $S$.

\begin{theorem}[Principal minors matrix-tree theorem]
\label{the:mtree}
Let $(G,\eps)$ be an edge-weighted multigraph and let $S\subseteq V$. Let $L_{(G,\eps)}^{\hat S}$ denote the principal submatrix of $L_{(G,\eps)}$ obtained by deleting the rows and columns corresponding to $S$. Then
$$\det L^{\hat S}_{(G,\eps)} = \sum_F \prod_{e\in E(F)}\eps(e),$$
where the sum is over all $S$-rooted spanning forests $F$ of $G$.
\end{theorem}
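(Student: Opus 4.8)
The plan is to reduce this to a known matrix-tree theorem. The cleanest route is to invoke the all-minors matrix-tree theorem of Chaiken~\cite{Chaiken}, which computes an arbitrary minor (not just a principal one) of the combinatorial Laplacian as a signed sum over forests with specified root sets, and to observe that in the principal case all the signs collapse to $+1$. Concretely, I would first recall that for an edge-weighted multigraph $(G,\eps)$ the entries of $L_{(G,\eps)}$ are $\ell_{ii}=d_{(G,\eps)}(v_i)$ and $\ell_{ij}=-\eps(v_i,v_j)$ for $i\ne j$; note that loops contribute equally to the degree and (doubled) to the diagonal adjacency entry, so they cancel in $L_{(G,\eps)}$ and play no role, consistent with the fact that loops never appear in a spanning forest. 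Then I would set up the statement of Chaiken's theorem in the special case where the deleted row set equals the deleted column set $S$: the minor $\det L^{\hat S}_{(G,\eps)}$ is a sum over spanning forests $F$ of $G$ with exactly $|S|$ components, each component containing exactly one vertex of $S$, weighted by $\prod_{e\in E(F)}\eps(e)$ times a sign determined by a permutation matching $S$-vertices to components; when rows and columns are deleted according to the same set $S$, that matching is forced to be the identity-type pairing and the sign is $+1$.

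The second step is to reconcile the "one vertex of $S$ per component" condition in Chaiken's formulation with the notion of $S$-rooted spanning forest defined in the excerpt, namely a spanning forest together with a choice of one root per component whose root set is exactly $S$. These are literally the same objects: a spanning forest in which each component meets $S$ in exactly one vertex is the same as a spanning forest whose component count is $|S|$ together with the canonical choice of roots $=S$, and conversely. Hence the sum over Chaiken's forests is exactly the sum over $S$-rooted spanning forests $F$, and the weight $\prod_{e\in E(F)}\eps(e)$ matches on the nose. This yields the claimed identity $\det L^{\hat S}_{(G,\eps)} = \sum_F \prod_{e\in E(F)}\eps(e)$.

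An alternative, more self-contained approach, should one wish to avoid quoting the all-minors version, is a direct induction on $|E(G)|$ using deletion-contraction of a single non-loop edge $e=uv$ with $u,v\notin S$ (if no such edge exists the identity is immediate): expanding the degree term of $L$ along the split $\eps(e) + (\text{rest})$ gives $\det L^{\hat S}_{(G,\eps)} = \eps(e)\det L^{\hat S}_{(G/e,\eps)} + \det L^{\hat S}_{(G-e,\eps)}$ after identifying the contracted vertex; on the forest side, $S$-rooted spanning forests of $G$ either use $e$ (in bijection with $S$-rooted spanning forests of $G/e$, contributing the factor $\eps(e)$) or do not (these are the $S$-rooted spanning forests of $G-e$), closing the induction. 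One must also handle the base-case bookkeeping when $u$ or $v$ lies in $S$ or when edges become loops after contraction, which is routine.

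The main obstacle is purely one of sign bookkeeping: one must be careful that the minor in question is genuinely \emph{principal} (same rows and columns removed), since the all-minors theorem produces nontrivial signs for non-principal minors, and one must correctly track how loops and multiple edges interact with the degree term so that the edge-weight products come out exactly as $\prod_{e\in E(F)}\eps(e)$ with no stray factors. Everything else is a direct translation between the two descriptions of rooted spanning forests.
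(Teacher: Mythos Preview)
Your proposal is correct and matches the paper's own treatment: the paper does not give a proof of this theorem but simply states that it ``may be deduced from the all minors matrix-tree theorem in \cite{Chaiken},'' which is precisely your primary route. Your additional deletion-contraction alternative is a nice self-contained bonus beyond what the paper provides.
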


\begin{definition}
The \textit{normalised Laplacian} of an edge-weighted multigraph $(G,\eps)$ with no isolated vertices and vertex set $\{v_1,\dots,v_n\}$ is
$$\nL_{(G,\eps)}=D_{(G,\eps)}^{-1}L_{(G,\eps)}.$$
\end{definition}

Many authors define the normalised Laplacian instead by $D_{(G,\eps)}^{-1/2}L_{(G,\eps)}D_{(G,\eps)}^{-1/2}$, which is a real symmetric matrix. However, the two possible definitions of the normalised Laplacian give similar matrices, as $D_{(G,\eps)}^{-1/2}L_{(G,\eps)}D_{(G,\eps)}^{-1/2}=D_{(G,\eps)}^{1/2}\nL_{(G,\eps)}D_{(G,\eps)}^{-1/2}$. In particular, the two matrices share the same eigenvalues, and the eigenvectors of one may be recovered from the eigenvectors of the other by the change of basis matrix $D_{(G,\eps)}^{1/2}$. The motivation for having the normalised Laplacian be symmetric is given by the spectral theorem on self-adjoint operators.

If $A:V\to W$ is a linear map between two finite-dimensional inner product spaces, then its \textit{adjoint} is the unique linear map $A^*:W\to V$ satisfying $\langle Ax,y\rangle = \langle x,A^*y\rangle$ for all $x\in V$ and $y\in W$. A linear map $A:V\to V$ is \textit{self-adjoint} if it is equal to its own adjoint. Note that when the inner product space $V$ is given by $\mathbb R^n$ with the Euclidean inner product, the self-adjoint maps $V\to V$ are exactly the $n\times n$ real symmetric matrices.

\begin{theorem}[Spectral theorem]
    Let $V$ be an $n$-dimensional inner product space, and let $A:V\to V$ be a self-adjoint linear map. Then $A$ has $n$ real eigenvalues, and there exists an orthonormal basis of $V$ consisting of eigenvectors of $A$.
\end{theorem}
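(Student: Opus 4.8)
The plan is to argue by induction on $n=\dim V$, the base cases $n\le 1$ being immediate. The crux of the inductive step is to produce a single real eigenvalue together with a corresponding eigenvector; once this is in hand, self-adjointness lets us pass to an orthogonal complement of one dimension lower and invoke the inductive hypothesis.

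First I would establish that $A$ has a real eigenvalue. The cleanest route is complexification: extend $A$ to $A_{\mathbb{C}}$ on $V\otimes_{\mathbb{R}}\mathbb{C}$, which by the fundamental theorem of algebra has an eigenvalue $\lambda\in\mathbb{C}$ with eigenvector $v\neq 0$. Extending the inner product to a Hermitian form and using that $A$ (hence $A_{\mathbb{C}}$) is self-adjoint, one computes $\lambda\langle v,v\rangle=\langle A_{\mathbb{C}}v,v\rangle=\langle v,A_{\mathbb{C}}v\rangle=\overline{\lambda}\langle v,v\rangle$, and since $\langle v,v\rangle>0$ this forces $\lambda=\overline{\lambda}$, i.e.\ $\lambda\in\mathbb{R}$. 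As $\lambda$ is then a real root of the real characteristic polynomial $\det(xI-A)$, the operator $A-\lambda I$ is singular on $V$ itself, so there is a nonzero $v_1\in V$ with $Av_1=\lambda v_1$; normalise so that $\langle v_1,v_1\rangle=1$. (Alternatively, one could maximise the Rayleigh quotient $\langle Ax,x\rangle/\langle x,x\rangle$ over the unit sphere, which is compact, and verify via a first-order condition that a maximiser is an eigenvector; this avoids complexification at the cost of a little analysis.)

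Next I would set $W=\{w\in V:\langle w,v_1\rangle=0\}$, an $(n-1)$-dimensional subspace, and check that $A(W)\subseteq W$: for $w\in W$ we have $\langle Aw,v_1\rangle=\langle w,Av_1\rangle=\lambda\langle w,v_1\rangle=0$. Thus $A|_W:W\to W$ is well defined, and it is self-adjoint for the restricted inner product, since the defining identity $\langle Ax,y\rangle=\langle x,Ay\rangle$ holds for all $x,y\in V$, in particular for $x,y\in W$. By the inductive hypothesis $W$ admits an orthonormal basis $v_2,\dots,v_n$ of eigenvectors of $A|_W$ — which are a fortiori eigenvectors of $A$ — with real eigenvalues. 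Then $v_1,v_2,\dots,v_n$ is an orthonormal basis of $V$ consisting of eigenvectors of $A$, and the $n$ associated eigenvalues, counted with multiplicity, are precisely the $n$ real roots of the characteristic polynomial. This closes the induction.

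The main obstacle is the very first step: guaranteeing that a self-adjoint operator on a \emph{real} inner product space has a real eigenvalue at all, since everything afterwards is formal. I expect the write-up to either invoke complexification together with the Hermitian computation above, or the compactness argument on the unit sphere; the former keeps the exposition purely algebraic and is likely the better fit here.
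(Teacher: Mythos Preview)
Your argument is correct and is the standard inductive proof of the spectral theorem. However, there is nothing to compare against: in the paper this theorem is stated in the background section without proof, as a classical result being quoted rather than established. So your write-up supplies a proof where the paper gives none; as such it is sound, and either the complexification route or the Rayleigh-quotient route you outline would be acceptable.
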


In particular, for any edge-weighted graph $(G,\eps)$ on $n$ vertices, $L_{(G,\eps)}$ has $n$ real eigenvalues and there exists an orthonormal basis of $\mathbb R^n$ consisting of eigenvectors of $L_{(G,\eps)}$. Since $\nL_{(G,\eps)}$ is similar to symmetric $D_{(G,\eps)}^{-1/2}L_{(G,\eps)}D_{(G,\eps)}^{-1/2}$, it also has $n$ real eigenvalues.

Given a self-adjoint linear map $A:V\to V$, where $V$ is a finite-dimensional inner product space, the \textit{Rayleigh quotient} of nonzero $x\in V$ is the value $\frac{\langle Ax,x\rangle}{\langle x,x\rangle}$. Note when $x$ is an eigenvector of $A$, the Rayleigh quotient of $x$ is the corresponding eigenvalue. Rayleigh quotients can be used to bound the extreme eigenvalues of a self-adjoint operator.

\begin{lemma} \label{lem:rq}
    Let $V$ be an $n$-dimensional inner product space, and let $A:V\to V$ be a self-adjoint linear map. Let $\lambda_1$ and $\lambda_n$ denote the smallest and largest eigenvalues of $A$, respectively. Then for any nonzero $x\in V$, we have
$$\lambda_1\le \frac{\langle Ax,x\rangle}{\langle x,x\rangle} \le \lambda_n.$$
\end{lemma}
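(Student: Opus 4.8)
The plan is to diagonalise $A$ via the spectral theorem and then expand an arbitrary nonzero $x$ in the resulting eigenbasis, which turns both inner products into weighted sums of the eigenvalues. First I would apply the spectral theorem to obtain an orthonormal basis $\{u_1,\dots,u_n\}$ of $V$ with $Au_i=\mu_i u_i$, where the $\mu_i$ are the eigenvalues of $A$ listed with multiplicity; reindexing if necessary, I may assume $\mu_1=\lambda_1$ is the smallest and $\mu_n=\lambda_n$ is the largest, so that $\lambda_1\le\mu_i\le\lambda_n$ for every $i$.

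Next, for nonzero $x\in V$ I would write $x=\sum_{i=1}^n c_i u_i$ with $c_i=\langle x,u_i\rangle$. Using bilinearity of the inner product together with the orthonormality $\langle u_i,u_j\rangle=\delta_{ij}$, I get $\langle x,x\rangle=\sum_{i=1}^n c_i^2$ and $\langle Ax,x\rangle=\big\langle \sum_i c_i\mu_i u_i,\ \sum_j c_j u_j\big\rangle=\sum_{i=1}^n \mu_i c_i^2$. Since $x\ne 0$, some $c_i$ is nonzero, so $\langle x,x\rangle=\sum_i c_i^2>0$.

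Then I would sandwich the numerator using $\lambda_1\le\mu_i\le\lambda_n$ termwise and $c_i^2\ge 0$, obtaining $\lambda_1\sum_i c_i^2 \le \sum_i \mu_i c_i^2 \le \lambda_n\sum_i c_i^2$, that is, $\lambda_1\langle x,x\rangle \le \langle Ax,x\rangle \le \lambda_n\langle x,x\rangle$. Dividing through by the strictly positive quantity $\langle x,x\rangle$ gives the desired inequality.

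I do not expect any real obstacle here: essentially all of the content has been packaged into the spectral theorem, which is available by assumption. The only points needing a word of care are that the inner product spaces in question are real, so the coefficients $c_i$ and eigenvalues $\mu_i$ are real numbers and $c_i^2\ge 0$, and that $\langle x,x\rangle\ne 0$ for $x\ne 0$; both are immediate from the definition of an inner product.
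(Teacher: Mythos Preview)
Your proof is correct and is the standard argument via the spectral theorem. The paper states this lemma as background without supplying a proof, so there is nothing further to compare.
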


An \textit{isometry} $\iota: V\to W$ between finite-dimensional inner product spaces $V$ and $W$ is a linear map satisfying $\langle x,y\rangle=\langle\iota x,\iota y\rangle$ for all $x,y\in V$. Note that an isometry is necessarily injective.

\begin{theorem}[Cauchy interlacing theorem] \label{the:cauchy}
Let $V$ and $W$ be finite-dimensional inner product spaces, and let $\iota:V\to W$ be an isometry. Let $\lambda_1\le\dots\le\lambda_n$ and $\mu_1\le\dots\le\mu_{n-k}$ denote the eigenvalues of $A$ and $\iota^*A\iota$, respectively, where $A:W\to W$ is a self-adjoint linear map. Then
$$\lambda_i\le\mu_i\le\lambda_{i+k}$$
for all $1\le i \le n-k$.
\end{theorem}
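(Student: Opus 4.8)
The plan is to deduce this from the Courant--Fischer min-max characterisation of eigenvalues, which follows from the spectral theorem and extends Lemma~\ref{lem:rq} (the cases $i=1$ and $i=n$): for a self-adjoint map $B:U\to U$ on an $m$-dimensional inner product space with eigenvalues $\nu_1\le\dots\le\nu_m$ and each $1\le j\le m$,
$$\nu_j=\min_{\substack{Z\subseteq U\\ \dim Z=j}}\ \max_{\substack{z\in Z\\ z\ne 0}}\frac{\langle Bz,z\rangle}{\langle z,z\rangle}.$$
First I would record two preliminaries. Writing $B=\iota^*A\iota$, we have $B^*=\iota^*A^*\iota^{**}=\iota^*A\iota=B$, so $B$ is self-adjoint on $V$ and the statement makes sense; here $\dim V=n-k$. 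Secondly, because $\iota$ is an isometry, for every nonzero $x\in V$ one has $\langle Bx,x\rangle=\langle A\iota x,\iota x\rangle$ and $\langle x,x\rangle=\langle\iota x,\iota x\rangle$, so the Rayleigh quotient of $x$ under $B$ equals that of $\iota x$ under $A$; moreover $\iota$ is injective, hence carries a $j$-dimensional subspace of $V$ to a $j$-dimensional subspace of $W$.

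For the lower bound $\lambda_i\le\mu_i$, I would pick $Z^*\subseteq V$ of dimension $i$ attaining $\mu_i$ in the formula above. Then $\iota(Z^*)\subseteq W$ is $i$-dimensional, so Courant--Fischer applied to $A$ together with the Rayleigh quotient identity gives
$$\lambda_i\le\max_{\substack{y\in\iota(Z^*)\\ y\ne0}}\frac{\langle Ay,y\rangle}{\langle y,y\rangle}=\max_{\substack{x\in Z^*\\ x\ne0}}\frac{\langle Bx,x\rangle}{\langle x,x\rangle}=\mu_i.$$
For the upper bound $\mu_i\le\lambda_{i+k}$, I would instead pick $Y^*\subseteq W$ of dimension $i+k$ attaining $\lambda_{i+k}$. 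Since $\dim\iota(V)=n-k$, the intersection $Y^*\cap\iota(V)$ has dimension at least $(i+k)+(n-k)-n=i$; choosing an $i$-dimensional subspace $Y_0$ of it and setting $Z_0=\iota^{-1}(Y_0)$, which is $i$-dimensional with $\iota(Z_0)=Y_0\subseteq Y^*$, Courant--Fischer for $B$ yields
$$\mu_i\le\max_{\substack{x\in Z_0\\ x\ne0}}\frac{\langle Bx,x\rangle}{\langle x,x\rangle}=\max_{\substack{y\in Y_0\\ y\ne0}}\frac{\langle Ay,y\rangle}{\langle y,y\rangle}\le\max_{\substack{y\in Y^*\\ y\ne0}}\frac{\langle Ay,y\rangle}{\langle y,y\rangle}=\lambda_{i+k}.$$

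The main obstacle is the bookkeeping in the upper bound: getting the dimension inequality for $Y^*\cap\iota(V)$ and transporting the extremal subspace back along $\iota$ so that Rayleigh quotients can be compared. There is also the preliminary matter of having Courant--Fischer available; if one does not wish to cite it, it can be proved from the spectral theorem by the standard argument, intersecting an arbitrary $j$-dimensional subspace with the span of the eigenvectors for $\nu_j,\dots,\nu_m$ to obtain the $\ge$ direction and testing on the span of the eigenvectors for $\nu_1,\dots,\nu_j$ for the $\le$ direction.
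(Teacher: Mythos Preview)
Your proof is correct and is the standard Courant--Fischer argument for Cauchy interlacing. The paper, however, does not prove this theorem at all: it is stated in the background section as a classical result and used as a black box throughout, so there is no proof in the paper to compare against.
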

\begin{remark}\label{rem:preserve}
Under the hypotheses of the theorem, in order for a linear map $B:V\to V$ to be equal to $\iota^*A\iota$, it is sufficient that $B$ be self-adjoint and preserve quadratic form. To see this, note if
$$\langle Bx,x\rangle = \langle A\iota x, \iota x\rangle$$ for all $x\in V$, then $B-\iota^*A\iota$ is self-adjoint and satisfies $\langle(B-\iota^*A\iota)x,x\rangle = 0$ for all $x\in V$, implying that $B-\iota^*A\iota$ is the zero matrix, having all eigenvalues zero.
\end{remark}

One common application of the Cauchy interlacing theorem is when $V$ and $W$ are given by $\mathbb R^{n-k}$ and $\mathbb R^n$ with the usual Euclidean inner product. If $A$ is an $n\times n$ real symmetric matrix and $B$ is an $(n-k)\times(n-k)$ principal submatrix, then there is an isometry $\iota:\mathbb R^{n-k}\to\mathbb R^n$ for which $B=\iota^*A\iota$. Note that the Cauchy interlacing theorem also applies when taking a principal submatrix of the matrix of a self-adjoint map in an orthogonal, and not necessarily orthonormal, basis.

Finally, we will also write $[n]$ for $n\in\mathbb Z_{>0}$ to denote the set $\{1,\dots,n\}$.

\section{Discrete vector calculus and the weighted Laplacian}\label{sec:calc}
In this section, we will introduce important inner product spaces and linear maps associated with a weighted graph $(G,\ups,\eps)$, and describe analogies to concepts from classical vector calculus.

Since $\ups$ is a map $V\to\mathbb R_{>0}$, it can be extended to define a measure on $V$. We define $L^2(V,\ups)$ to be the real inner product space of \textit{scalar fields} $f:V\to\mathbb R$, with inner product given by
$$\langle f,g\rangle = \int_V fgd\ups = \sum_{v\in V}f(v)g(v)\ups(v).$$

Similarly, $\eps: E\to\mathbb R_{>0}$ can also be extended to define a measure on $E$. We next define the real inner product space $L^2(E,\eps)$ of \textit{vector fields} on our graph. If we make an arbitrary choice of orientation for each edge of our graph, then to a first approximation we can think of vector fields $\mathbf F\in L^2(E,\eps)$ as real-valued functions $\mathbf F: E\to\mathbb R$. Then the definition of $L^2(E,\eps)$ and its inner product coincides with the usual definitions for the $L^2$ space associated with a measure space.

However, it is more natural to think of elements of $L^2(E,\eps)$ as vector-valued functions on $E$. To each edge $uv\in E$, we associate a unique real inner product space spanned by the unit vector
$$\mathbf v_{\vec {uv}} = -\mathbf v_{\vec{vu}},$$
where we think of $\mathbf v_{\vec{uv}}$ as a vector pointing towards $v$ and away from $u$, and of $\mathbf v_{\vec{vu}}$ as a vector pointing towards $u$ and away from $v$. Then a vector field $\mathbf F\in L^2(E,\eps)$ is a choice for each edge $e\in E$ of a vector $\mathbf F(e)$ in the space associated to $e$, and the inner product on $L^2(E,\eps)$ is given by
$$\langle\mathbf F,\mathbf G\rangle = \int_E \mathbf F\cdot\mathbf G d\eps = \sum_{e\in E}\mathbf F(e)\cdot\mathbf G(e)\eps(e).$$

We will next introduce two important linear maps between the spaces defined. The first map is defined by
\begin{align*}
\ndiv: L^2(E,\eps)&\to L^2(V,\ups)\\
\ndiv\mathbf F(v) &= \sum_{\substack{uv\in E\\ u\neq v}}\frac{\eps(uv)}{\ups(v)}\mathbf F(uv)\cdot \mathbf v_{\vec{uv}}.
\end{align*}
Applying $\ndiv$ to a vector field gives a measure of the net flow into each vertex per unit vertex weight, so $\ndiv$ is the discrete analogue of negative divergence from vector calculus.

\begin{example}\label{ex:ndiv}
Let $(G,\ups,\eps)$ be the weighted graph depicted on the left, and let $\mathbf F\in L^2(E,\eps)$ be the vector field depicted in the centre. Then $\ndiv\mathbf F\in L^2(V,\ups)$ is the scalar field depicted on the right. The value of $\ndiv\mathbf F$ at, for example, the vertex of weight $2$ in $(G,\ups,\eps)$ is obtained by computing $\frac{-2-6-2+3}{2}=-\frac{7}{2}$.
\begin{center}
\begin{tikzpicture}
\node[shape=circle, draw=black](1) at (4.75,-1.05) {\scriptsize 1};
\node[shape=circle, draw=black](2) at (7.14,-1.05) {\scriptsize 2};
\node[shape=circle,  draw=black] (3) at (5.9,-.35) {\scriptsize 1};
\node[shape=circle,  draw=black] (4) at (5.9,1.05) {\scriptsize 3};
\path[-] (1) edge (4);
\path[-] (3) edge (4);
\path[-] (2) edge (4);
\path[-] (3) edge (2);
\path[-, bend left=18.75] (1) edge (2);
\path[-, bend right=18.75] (1) edge (2);
\draw[-] (1)  to[in=205,out=155,loop] (1);
\node[] at (5.15,0) {\scriptsize 1};
\node[] at (6.7,0) {\scriptsize 1};
\node[] at (5.75,.3) {\scriptsize 2};
\node[] at (6.5,-.5) {\scriptsize 3};
\node[] at (5.9,-.95) {\scriptsize 2};
\node[] at (5.9,-1.5) {\scriptsize 1};
\node[] at (3.7,-1.05) {\scriptsize 3};
\node[] at (5.9,-1.9) { $(G,\ups,\eps)$};
\end{tikzpicture}\quad\quad
\begin{tikzpicture}
\node[shape=circle](1) at (4.75,-1.05) {};
\filldraw [black] (1) circle (2pt);
\node[shape=circle](2) at (7.14,-1.05) {};
\filldraw [black] (2) circle (2pt);
\node[shape=circle] (3) at (5.9,-.35) {};
\filldraw [black] (3) circle (2pt);
\node[shape=circle] (4) at (5.9,1.05) {};
\filldraw [black] (4) circle (2pt);
\path[-] (1) edge (4);
\path[->] (3) edge (4);
\path[->] (2) edge (4);
\path[<-] (3) edge (2);
\path[<-, bend left=20] (1) edge (2);
\path[->, bend right=20] (1) edge (2);
\path[->, scale=2.2] (1) edge[in=207.5,out=152.5,loop] (1);
\node[] at (5.15,0) {\scriptsize 0};
\node[] at (6.7,0) {\scriptsize 2};
\node[] at (5.75,.3) {\scriptsize 1};
\node[] at (6.5,-.5) {\scriptsize 2};
\node[] at (5.9,-.95) {\scriptsize 1};
\node[] at (5.9,-1.5) {\scriptsize 3};
\node[] at (3.7,-1.05) {\scriptsize 2};
\node[] at (5.9,-1.9) { $\mathbf F$};
\end{tikzpicture}\quad\quad
\begin{tikzpicture}
\coordinate (1) at (4.75,-1.05);
\coordinate (2) at (7.14,-1.05);
\coordinate (3) at (5.9,-.35);
\coordinate (4) at (5.9,1.05);
\filldraw [black] (1) circle (2pt);
\filldraw [black] (2) circle (2pt);
\filldraw [black] (3) circle (2pt);
\filldraw [black] (4) circle (2pt);
\draw[-] (1) to [bend left=22.5] (2);
\draw[-] (1) to [bend right=22.5] (2);
\draw[scale=2.7] (1)  to[in=212.5,out=147.5,loop] (1);
\draw[-] (1)--(4);
\draw[-] (3)--(4);
\draw[-] (2)--(4);
\draw[-] (3)--(2);
\node[] at (5.7,1.05) {\scriptsize $\frac{4}{3}$};
\node[] at (4.75,-1.35) {\scriptsize $-1$};
\node[] at (7.14,-1.35) {\scriptsize $-\frac{7}{2}$};
\node[] at (5.7,-.35) {\scriptsize $4$};
\node[] at (5.9,-1.9) { $\ndiv\mathbf F$};
\end{tikzpicture}
\end{center}
\end{example}

The second map is defined by
\begin{align*}
\grad: L^2(V,\ups)&\to L^2(E,\eps)\\
\grad f(uv)&= (f(v)-f(u))\mathbf v_{\vec{uv}}.
\end{align*}
Applying $\grad$ to a scalar field gives at each edge a vector pointing in the direction of steepest ascent, so $\grad$ is the discrete analogue of gradient from vector calculus.

\begin{example}
Let $(G,\ups,\eps)$ be the weighted graph in Example~\ref{ex:ndiv}, and let $f\in L^2(V,\ups)$ be the scalar field depicted on the left. Then $\grad f\in L^2(E,\eps)$ is the vector field depicted on the right. The value of $\grad f$ on, for example, the edge between the vertex $u$ of weight $3$ and the vertex $v$ of weight $2$ in $(G,\ups,\eps)$ is obtained by computing $(1+2)\mathbf v_{\vec{uv}}=3\mathbf v_{\vec {uv}}.$
\begin{center}
\begin{tikzpicture}
\coordinate (1) at (4.75,-1.05);
\coordinate (2) at (7.14,-1.05);
\coordinate (3) at (5.9,-.35);
\coordinate (4) at (5.9,1.05);
\filldraw [black] (1) circle (2pt);
\filldraw [black] (2) circle (2pt);
\filldraw [black] (3) circle (2pt);
\filldraw [black] (4) circle (2pt);
\draw[-] (1) to [bend left=22.5] (2);
\draw[-] (1) to [bend right=22.5] (2);
\draw[scale=2.7] (1)  to[in=212.5,out=147.5,loop] (1);
\draw[-] (1)--(4);
\draw[-] (3)--(4);
\draw[-] (2)--(4);
\draw[-] (3)--(2);
\node[] at (5.55,1.05) {\scriptsize $-2$};
\node[] at (4.75,-1.35) {\scriptsize $2$};
\node[] at (7.14,-1.35) {\scriptsize $1$};
\node[] at (5.7,-.35) {\scriptsize $1$};
\node[] at (5.9,-1.9) { $f$};
\end{tikzpicture}\quad\quad
\begin{tikzpicture}
\node[shape=circle](1) at (4.75,-1.05) {};
\filldraw [black] (1) circle (2pt);
\node[shape=circle](2) at (7.14,-1.05) {};
\filldraw [black] (2) circle (2pt);
\node[shape=circle] (3) at (5.9,-.35) {};
\filldraw [black] (3) circle (2pt);
\node[shape=circle] (4) at (5.9,1.05) {};
\filldraw [black] (4) circle (2pt);
\path[<-] (1) edge (4);
\path[<-] (3) edge (4);
\path[<-] (2) edge (4);
\path[-] (3) edge (2);
\path[<-, bend left=20] (1) edge (2);
\path[<-, bend right=20] (1) edge (2);
\draw[scale=2.2] (1)  to[in=207.5,out=152.5,loop] (1);
\node[] at (5.15,0) {\scriptsize 4};
\node[] at (6.7,0) {\scriptsize 3};
\node[] at (5.75,.3) {\scriptsize 3};
\node[] at (6.5,-.5) {\scriptsize 0};
\node[] at (5.9,-.95) {\scriptsize 1};
\node[] at (5.9,-1.5) {\scriptsize 1};
\node[] at (3.7,-1.05) {\scriptsize 0};
\node[] at (5.9,-1.9) { $\grad f$};
\end{tikzpicture}
\end{center}
\end{example}

\begin{proposition} \label{prop:adj}
The maps $\ndiv: L^2(E,\eps)\to L^2(V,\ups)$ and $\grad: L^2(V,\ups)\to L^2(E,\eps)$ are adjoint.
\end{proposition}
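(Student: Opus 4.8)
The plan is to verify directly the identity $\langle \grad f,\mathbf F\rangle = \langle f,\ndiv\mathbf F\rangle$ for every $f\in L^2(V,\ups)$ and every $\mathbf F\in L^2(E,\eps)$. Since $\grad$ and $\ndiv$ are linear maps between $L^2(V,\ups)$ and $L^2(E,\eps)$ in opposite directions, this single identity is exactly the assertion that $\ndiv$ is the adjoint of $\grad$ (equivalently, that $\grad$ is the adjoint of $\ndiv$). The strategy is to rewrite each side as a sum indexed by the edges of $G$ and to observe that the two sums agree term by term.

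First I would expand the left-hand side. Fix an edge $e\in E$ with distinct endpoints $u$ and $v$, and abbreviate by $\mathbf F(e)\cdot\mathbf v_{\vec{uv}}$ the component of $\mathbf F(e)$ toward $v$; note $\mathbf F(e)\cdot\mathbf v_{\vec{vu}} = -\,\mathbf F(e)\cdot\mathbf v_{\vec{uv}}$ since $\mathbf v_{\vec{vu}} = -\mathbf v_{\vec{uv}}$, and $\mathbf v_{\vec{uv}}$ is a unit vector. Then from the definition of $\grad$ we get $\grad f(e)\cdot\mathbf F(e) = (f(v)-f(u))\,(\mathbf F(e)\cdot\mathbf v_{\vec{uv}})$, which is symmetric under swapping $u$ and $v$ (both factors change sign) and vanishes when $e$ is a loop. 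Hence
\[
\langle\grad f,\mathbf F\rangle \;=\; \sum_{e\in E}\grad f(e)\cdot\mathbf F(e)\,\eps(e) \;=\; \sum_{\substack{e=uv\in E\\ u\ne v}}(f(v)-f(u))\,(\mathbf F(e)\cdot\mathbf v_{\vec{uv}})\,\eps(e).
\]

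For the right-hand side I would substitute the definitions of $\ndiv$ and of the inner product on $L^2(V,\ups)$, and cancel the factor $\ups(v)$, obtaining
\[
\langle f,\ndiv\mathbf F\rangle \;=\; \sum_{v\in V}f(v)\,\ups(v)\!\!\sum_{\substack{uv\in E\\ u\ne v}}\!\!\frac{\eps(uv)}{\ups(v)}\,\mathbf F(uv)\cdot\mathbf v_{\vec{uv}} \;=\; \sum_{v\in V}\ \sum_{\substack{e=uv\in E\\ u\ne v}} f(v)\,(\mathbf F(e)\cdot\mathbf v_{\vec{uv}})\,\eps(e).
\]
This double sum runs over all pairs consisting of a non-loop edge together with one of its two endpoints; grouping the two pairs arising from a single edge $e=uv$, their combined contribution is $f(v)(\mathbf F(e)\cdot\mathbf v_{\vec{uv}})\eps(e) + f(u)(\mathbf F(e)\cdot\mathbf v_{\vec{vu}})\eps(e) = (f(v)-f(u))(\mathbf F(e)\cdot\mathbf v_{\vec{uv}})\eps(e)$, so summing over edges recovers precisely the expression found for $\langle\grad f,\mathbf F\rangle$. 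The computation is routine; the only points needing care are the multigraph bookkeeping (summing over edges, not over vertex pairs), the fact that loops contribute nothing to either side, and checking that each edge summand does not depend on which endpoint is singled out — which is exactly what makes the term-by-term comparison work. I do not expect a genuine obstacle here.
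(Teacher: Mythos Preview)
Your proof is correct and follows essentially the same approach as the paper: a direct verification that $\langle \grad f,\mathbf F\rangle = \langle f,\ndiv\mathbf F\rangle$ by expanding both inner products, cancelling the weight $\ups(v)$, and regrouping the resulting sum over (edge, endpoint) pairs into a sum over edges. The paper presents this as a single chain of equalities starting from $\langle\ndiv\mathbf F,g\rangle$, whereas you expand the two sides separately and match them, but the content is the same.
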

\begin{proof}
For $\mathbf F\in L^2(E,\eps)$ and $g\in L^2(V,\ups)$, we have
\begin{align*}
    \langle \ndiv \mathbf F,g\rangle = \int_V \ndiv\mathbf F g d\ups &= \sum_{v\in V}\sum_{\substack{uv\in E\\ u\neq v}} \frac{\eps(uv)}{\ups(v)} \mathbf F(uv)\cdot\mathbf v_{\vec{uv}} g(v)\ups(v)\\
    &=\sum_{\substack{uv\in E\\ u\neq v}} \mathbf F(uv)\cdot (g(u)\mathbf v_{\vec{vu}} + g(v)\mathbf v_{\vec{uv}})\eps(uv)\\
    &= \sum_{uv\in E} \mathbf F(uv)\cdot (g(v)-g(u))\mathbf v_{\vec{uv}}\eps(uv) = \int_E \mathbf F\cdot\grad gd\eps = \langle \mathbf F,\grad g\rangle,
\end{align*}
so $\ndiv$ and $\grad$ are adjoint.
\end{proof}

Given a linear combination $S=\sum_{i=1}^kc_iv_i$ of vertices in $V$, we define the \textit{characteristic function} of $S$ to be $\ind_S=\sum_{i=1}^kc_i \ind_{v_i}\in L^2(V,\ups)$, where each $\ind_v\in L^2(V,\ups)$ is the scalar field evaluating to $1$ on $v$ and zero on every other vertex. Then, we can define integration on $S$ by
$$\int_S fd\ups = \int_V f\ind_S d\ups$$
for $f\in L^2(V,\ups)$. We also identify $S\subseteq V$ with the linear combination $\sum_{v\in S}v$.

Similarly, given a linear combination $R=\sum_{i=1}^kc_i\vec e_i$ of directed edges in $\vec E$, its \textit{characteristic function} is $\indb_R =\sum_{i=1}^kc_i\indb_{\vec{e}_i}\in L^2(E,\eps)$, where each $\indb_{\vec {uv}}\in L^2(E,\eps)$ is the vector field evaluating to $\mathbf v_{\vec{uv}}$ on $uv$ and zero on every other edge. We can define integration on $R$ by
$$\int_R \mathbf F\cdot d\eps = \int_E \mathbf F\cdot \indb_R d\eps$$
for $\mathbf F\in L^2(E,\eps)$. We also identify $R\subseteq\vec E$ with the linear combination $\sum_{\vec e\in R}\vec e$.

A \textit{walk} $C$ from $u$ to $v$ for vertices $u,v\in V$ is a sequence of directed edges $(\vec e_i)_{i=1}^k$ for which there exists a sequence of vertices $(v_i)_{i=0}^k$ so that $u=v_0$, $v=v_k$, and each $\vec{e_i}$ is a directed edge from $v_{i-1}$ to $v_i$. Note for every vertex $v\in V$ there exists the empty walk from $v$ to itself. Given a walk $C=(\vec e_i)_{i=1}^k$ from $u$ to $v$, the (oriented) \textit{curve} $\frac{C}{\eps}$ from $u$ to $v$ is the linear combination $\sum_{i=1}^k\frac{\vec e_i}{\eps(e_i)}$. A curve $\frac{C}{\eps}$ is \textit{closed} if $v_0=v_k$, and a closed curve is \textit{simple} if the sequence $(v_i)_{i=0}^k$ has repeated vertex $v_0=v_k$ (in particular, $k>0$), and otherwise no other repeated vertices.

Given a curve $\frac{C}{\eps}$ from $u$ to $v$, its \textit{boundary} $\ndiv\frac{C}{\eps}$ is the linear combination $\frac{v}{\ups(v)}-\frac{u}{\ups(u)}$. For $S\subseteq V$, we let $\grad S\subseteq \vec E$ denote the set of all directed edges $\vec {uv}$ satisfying $u\not\in S$ and $v\in S$. The \textit{boundary} of $S$ is the linear combination $-\grad S$. Note these are defined exactly so that for any curve $\frac{C}{\eps}$ and any $S\subseteq V$ that
$$\ndiv\indb_{\frac{C}{\eps}}=\ind_{\ndiv\frac{C}{\eps}}\quad\text{and}\quad\grad\ind_S = \indb_{\grad S}.$$

From these definitions and Proposition~\ref{prop:adj}, we obtain discrete analogues of the divergence and gradient theorems of vector calculus.

\begin{proposition}[Discrete divergence theorem]\label{prop:div}
Let $\mathbf F \in L^2(E,\eps)$ and let $S\subseteq V$. Then
$$\int_{S}-\ndiv \mathbf Fd\ups = \int_{-\grad S} \mathbf F \cdot d\eps=\sum_{\substack{uv\in E\\ u\in S,v\not\in S}} \mathbf F(uv)\cdot \mathbf v_{\vec{uv}} \eps(uv).$$
\end{proposition}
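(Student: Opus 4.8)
The plan is to recognise both ends of the displayed identity as inner products in $L^2(V,\ups)$ and $L^2(E,\eps)$ and to reduce the first equality to the adjointness of $\ndiv$ and $\grad$ established in Proposition~\ref{prop:adj}. Throughout I will use the definitions of integration on a formal combination of vertices and of directed edges, which give $\int_S h\,d\ups=\langle h,\ind_S\rangle$ for $h\in L^2(V,\ups)$ and $\int_R \mathbf G\cdot d\eps=\langle\mathbf G,\indb_R\rangle$ for $\mathbf G\in L^2(E,\eps)$, together with the compatibility identity $\grad\ind_S=\indb_{\grad S}$ noted just before the statement.

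For the first equality I would start from the left-hand side and compute
\[
\int_S -\ndiv\mathbf F\,d\ups=\langle -\ndiv\mathbf F,\ind_S\rangle=-\langle \ndiv\mathbf F,\ind_S\rangle=-\langle \mathbf F,\grad\ind_S\rangle=-\langle \mathbf F,\indb_{\grad S}\rangle,
\]
where the first step is the definition of integration on $S$, the third is Proposition~\ref{prop:adj}, and the fourth is $\grad\ind_S=\indb_{\grad S}$. Since $\indb_{(\cdot)}$ is defined to be linear in its subscript we have $\indb_{-\grad S}=-\indb_{\grad S}$, so this last quantity equals $\langle\mathbf F,\indb_{-\grad S}\rangle=\int_{-\grad S}\mathbf F\cdot d\eps$, which is the middle term of the claimed identity.

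For the second equality I would expand the boundary $-\grad S$ as an explicit combination of directed edges. By definition $\grad S=\sum_{\vec{uv}\in\vec E,\ u\notin S,\ v\in S}\vec{uv}$, and since $\mathbf v_{\vec{uv}}=-\mathbf v_{\vec{vu}}$ gives $\indb_{\vec{uv}}=-\indb_{\vec{vu}}$, we get $\indb_{-\grad S}=\sum_{uv\in E,\ u\in S,\ v\notin S}\indb_{\vec{uv}}$, the sum now running over undirected edges crossing from $S$ to $S^c$, each oriented out of $S$. Evaluating the inner product against this vector field using the definition of $\langle\cdot,\cdot\rangle$ on $L^2(E,\eps)$ yields
\[
\int_{-\grad S}\mathbf F\cdot d\eps=\sum_{\substack{uv\in E\\ u\in S,\ v\notin S}}\langle\mathbf F,\indb_{\vec{uv}}\rangle=\sum_{\substack{uv\in E\\ u\in S,\ v\notin S}}\mathbf F(uv)\cdot\mathbf v_{\vec{uv}}\,\eps(uv).
\]
I do not expect a genuine obstacle; the only care needed is the orientation bookkeeping, namely that passing from $\grad S$ to its negative flips every directed edge, which is precisely cancelled by the antisymmetry $\mathbf v_{\vec{uv}}=-\mathbf v_{\vec{vu}}$, and that loops (where $u=v$) never occur in $\grad S$ nor in the boundary sum, so the multigraph setting introduces nothing new. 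Alternatively one can prove the identity directly, by expanding $\int_S-\ndiv\mathbf F\,d\ups$ from the defining formula for $\ndiv$ and observing that every edge with both endpoints in $S$ is counted twice with opposite signs and hence cancels, leaving exactly the edges crossing the boundary; I would mention this as the conceptual reason the statement holds.
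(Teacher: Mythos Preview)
Your proof is correct and is exactly the argument the paper intends: the paper does not write out a proof but simply states that the result follows from the definitions together with Proposition~\ref{prop:adj}, and your expansion via $\int_S(-\ndiv\mathbf F)\,d\ups=\langle-\ndiv\mathbf F,\ind_S\rangle=-\langle\mathbf F,\grad\ind_S\rangle=\langle\mathbf F,\indb_{-\grad S}\rangle$ is precisely that. The orientation bookkeeping and the remark about loops are handled correctly.
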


\begin{proposition}[Discrete gradient theorem]\label{prop:grad}
Let $f\in L^2(V,\ups)$ and let $\frac{C}{\eps}$ be a curve from $u$ to $v$. Then
$$\int_{\frac{C}{\eps}}\grad f\cdot d\eps = \int_{\ndiv \frac{C}{\eps}}fd\ups=f(v)-f(u).$$
\end{proposition}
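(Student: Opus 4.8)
The plan is to read both equalities straight off the adjointness of $\ndiv$ and $\grad$ established in Proposition~\ref{prop:adj}, together with the two identities recorded just before the statement, namely $\ndiv\indb_{\frac{C}{\eps}}=\ind_{\ndiv\frac{C}{\eps}}$ and $\ndiv\frac{C}{\eps}=\frac{v}{\ups(v)}-\frac{u}{\ups(u)}$. First I would unwind the definition of integration over a curve: by definition
$$\int_{\frac{C}{\eps}}\grad f\cdot d\eps = \int_E \grad f\cdot\indb_{\frac{C}{\eps}}\,d\eps = \langle\grad f,\indb_{\frac{C}{\eps}}\rangle$$
in $L^2(E,\eps)$. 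Applying Proposition~\ref{prop:adj} to move $\grad$ across the inner product as $\ndiv$ turns this into $\langle f,\ndiv\indb_{\frac{C}{\eps}}\rangle$ in $L^2(V,\ups)$.

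Next I would invoke $\ndiv\indb_{\frac{C}{\eps}}=\ind_{\ndiv\frac{C}{\eps}}$ to rewrite the last expression as
$$\langle f,\ind_{\ndiv\frac{C}{\eps}}\rangle = \int_V f\,\ind_{\ndiv\frac{C}{\eps}}\,d\ups = \int_{\ndiv\frac{C}{\eps}}f\,d\ups,$$
which is the first claimed equality. For the second, I would substitute $\ndiv\frac{C}{\eps}=\frac{v}{\ups(v)}-\frac{u}{\ups(u)}$ and expand using linearity of the characteristic function:
$$\int_{\ndiv\frac{C}{\eps}}f\,d\ups = \frac{1}{\ups(v)}\int_V f\,\ind_v\,d\ups - \frac{1}{\ups(u)}\int_V f\,\ind_u\,d\ups = \frac{f(v)\ups(v)}{\ups(v)} - \frac{f(u)\ups(u)}{\ups(u)} = f(v)-f(u),$$
where the $\ups$ factors from the integral cancel the $1/\ups$ factors coming from the boundary of the curve.

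For transparency I would also point out the direct computation underlying the mechanism: writing $C=(\vec e_i)_{i=1}^k$ with underlying vertex sequence $u=v_0,\dots,v_k=v$, the $1/\eps(e_i)$ scaling built into $\frac{C}{\eps}$ exactly cancels the edge-weight factor in the $L^2(E,\eps)$ inner product, so that $\int_{\frac{C}{\eps}}\grad f\cdot d\eps = \sum_{i=1}^k \grad f(e_i)\cdot\mathbf v_{\vec e_i} = \sum_{i=1}^k\bigl(f(v_i)-f(v_{i-1})\bigr)$, which telescopes to $f(v)-f(u)$; this is simultaneously a proof of the identity $\ndiv\indb_{\frac{C}{\eps}}=\ind_{\ndiv\frac{C}{\eps}}$ for anyone who prefers not to cite it. The only point needing care is that a walk may traverse an edge more than once, so $\indb_{\frac{C}{\eps}}$ must be read with multiplicity; leaning on the pre-established identities sidesteps this bookkeeping entirely. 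I do not expect a genuine obstacle here — the whole content is that, under the chosen normalizations, the statement that $\grad$ is adjoint to $\ndiv$ specializes to the telescoping that is the fundamental theorem of calculus.
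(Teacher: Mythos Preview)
Your proof is correct and is exactly the argument the paper has in mind: the paper does not write out a separate proof but simply remarks that Propositions~\ref{prop:div} and~\ref{prop:grad} follow ``from these definitions and Proposition~\ref{prop:adj},'' i.e.\ from the identity $\ndiv\indb_{\frac{C}{\eps}}=\ind_{\ndiv\frac{C}{\eps}}$ together with the adjointness of $\ndiv$ and $\grad$. Your additional telescoping computation is a welcome but optional unpacking of that same mechanism.
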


In analogy with the (negative) Laplacian from vector calculus and the (negative) Laplace-Beltrami operator on Riemannian manifolds, we will define the Laplacian on a weighted graph to be the negative divergence of the gradient.

\begin{definition}\label{def:wlap}
The \textit{weighted Laplacian} $L_{(G,\ups,\eps)}:L^2(V,\ups)\to L^2(V,\ups)$ of a weighted multigraph $(G,\ups,\eps)$ is
$$L_{(G,\ups,\eps)}=\ndiv\grad.$$
\end{definition}

Whenever we want to think of $L_{(G,\ups,\eps)}$ as a matrix, we will consider the matrix of $L_{(G,\ups,\eps)}$ in the basis of characteristic functions of vertices. Then the matrix of $L_{(G,\ups,\eps)}$ can be computed to find that
$$L_{(G,\ups,\eps)}=W_{\ups}^{-1}L_{(G,\eps)},$$
where $W_\ups$ is the diagonal matrix with vertex weights as its diagonal entries. Therefore, the combinatorial Laplacian $L_{(G,\eps)}$ can be thought of as the special case where $\ups$ is the counting measure $\#_V$, i.e. all vertex weights are $1$, and the normalised Laplacian $\nL_{(G,\eps)}$ can be thought of as the special case where $\ups$ coincides with the measure obtained by extending the degree map $d_{(G,\eps)}:V\to \mathbb R_{\ge 0}$ to a measure on $V$.

\begin{remark}
The same weighted Laplacian matrix has been studied in \cite{BHG}, as well as a symmetric version in \cite{CWW, XFL}. The Laplace operator $L_{(G,\ups,\eps)}$ coincides with the Laplacian studied by Friedman and Tillich \cite{FT}, who also considered graphs with general vertex and edge measures, and the operator of Horak and Jost \cite{HJ}, who defined a Laplacian for weighted simplicial complexes, for graphs.
\end{remark}

Since $\ndiv$ and $\grad$ are adjoint by Proposition~\ref{prop:adj}, we have for $f,g\in L^2(V,\ups)$ that
$$\langle L_{(G,\ups,\eps)}f,g\rangle = \langle \grad f,\grad g\rangle = \langle f,L_{(G,\ups,\eps)}g\rangle,$$
and so the weighted Laplacian is self-adjoint. Therefore, $L_{(G,\ups,\eps)}$ has all real eigenvalues, and there exists an orthonormal basis for $L^2(V,\ups)$ consisting of eigenfunctions of $L_{(G,\ups,\eps)}$.

The weighted Laplacian Rayleigh quotient of nonzero $f\in L^2(V,\ups)$ is given by
$$\frac{\langle L_{(G,\ups,\eps)}f,f\rangle}{\langle f,f\rangle} = \frac{\langle \grad f,\grad f\rangle}{\langle f,f\rangle} = \frac{\int_E \lVert \grad f\rVert^2d\eps}{\int_V |f|^2d\ups},$$
which is always nonnegative, and is zero if and only if $f$ is constant on every connected component of $G$. Thus all eigenvalues of $L_{(G,\ups,\eps)}$ are nonnegative, and a basis for $\ker L_{(G,\ups,\eps)}$ is given by the characteristic functions of the connected components of $G$.

Because the matrix of the weighted Laplacian is given by $L_{(G,\ups,\eps)}=W_\ups^{-1}(D_{(G,\eps)}-A_{(G,\eps)}),$ we deduce the following.

\begin{lemma}\label{lem:simplify}
Let $(G,\ups,\eps)$ be a weighted multigraph. Then
\begin{itemize}
    \item[(i)] $L_{(G,\ups,\eps)\dju(G',\ups',\eps')} = L_{(G,\ups,\eps)}\oplus L_{(G',\ups',\eps')}$ whenever $(G',\ups',\eps')$ is another weighted graph,
    \item[(ii)]
    $L_{(G-e,\ups,\eps)} = L_{(G,\ups,\eps)}$ whenever $e$ is a loop,
    \item[(iii)]
    $L_{(G-e_1-e_2+e,\ups,\eps')} = L_{(G,\ups,\eps)}$ whenever $e_1,e_2$ and $e$ have the same endpoints, $\eps'$ inherits edge weights from $\eps$ and satisfies $\eps'(e)=\eps(e_1)+\eps(e_2)$,
    \item[(iv)]
    $L_{(G,c\ups,\eps)} = \frac{1}{c}L_{(G,\ups,\eps)}$ whenever $c\in\mathbb R_{>0}$, and
    \item[(v)]
    $L_{(G,\ups,c\eps)} = {c}L_{(G,\ups,\eps)}$ whenever $c\in \mathbb R_{>0}$.
\end{itemize}
\end{lemma}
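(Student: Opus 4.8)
The plan is to reduce all five parts to the matrix identity $L_{(G,\ups,\eps)} = W_\ups^{-1}(D_{(G,\eps)} - A_{(G,\eps)})$ recorded just before the statement, and then in each case simply track how the three factors $W_\ups$, $D_{(G,\eps)}$ and $A_{(G,\eps)}$ are affected by the operation. Since none of the operations in (i)--(iii) alters the vertex set, all matrices keep the same size and the comparisons are entrywise; no genuinely new ideas are needed.

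For (i), order the vertices of $G\dju G'$ as those of $G$ followed by those of $G'$. Then each of $W_\ups$, $D_{(G,\eps)}$, $A_{(G,\eps)}$ is block diagonal, with the matrix for $(G,\ups,\eps)$ in the top-left block and that for $(G',\ups',\eps')$ in the bottom-right block; block-diagonal matrices multiply blockwise, so $W_{\ups\dju\ups'}^{-1}(D_{(G\dju G',\eps\dju\eps')} - A_{(G\dju G',\eps\dju\eps')})$ is the direct sum of the two weighted Laplacians. For (iv), note $W_{c\ups} = cW_\ups$ while $D_{(G,\eps)}$ and $A_{(G,\eps)}$ are unchanged, so $L_{(G,c\ups,\eps)} = \tfrac1c W_\ups^{-1}(D_{(G,\eps)}-A_{(G,\eps)}) = \tfrac1c L_{(G,\ups,\eps)}$. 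For (v), note $D_{(G,c\eps)} = cD_{(G,\eps)}$ and $A_{(G,c\eps)} = cA_{(G,\eps)}$ while $W_\ups$ is unchanged, so $L_{(G,\ups,c\eps)} = cL_{(G,\ups,\eps)}$.

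The two parts that take a moment's care are (ii) and (iii), where the convention that $\eps(u,v)$ is the total edge weight between $u$ and $v$ "with loops counted twice" intervenes. For (ii): a loop $e$ at $v$ of weight $\eps(e)$ contributes $2\eps(e)$ to the diagonal entry $d_{(G,\eps)}(v)$ of $D_{(G,\eps)}$ and also $2\eps(e)$ to the diagonal entry $a_{vv}=\eps(v,v)$ of $A_{(G,\eps)}$, hence $0$ to $D_{(G,\eps)}-A_{(G,\eps)}$; deleting it changes neither $D_{(G,\eps)}-A_{(G,\eps)}$ nor $W_\ups$. (Alternatively one can argue directly from $L_{(G,\ups,\eps)} = \ndiv\grad$: the sum defining $\ndiv$ omits the terms with $u=v$, and $\grad f$ vanishes on every loop, so loops are invisible to $\ndiv\grad$.) For (iii): replacing the parallel edges $e_1,e_2$ between $u$ and $v$ by the single edge $e$ with $\eps'(e)=\eps(e_1)+\eps(e_2)$ leaves the total edge weight $\eps(u,v)$ between $u$ and $v$ unchanged, hence leaves the off-diagonal entries $a_{uv}=a_{vu}$ and the degrees $d_{(G,\eps)}(u),d_{(G,\eps)}(v)$ unchanged; $W_\ups$ is also unchanged, so $L_{(G,\ups,\eps)}$ is unchanged. (If $e_1,e_2$ are loops, the claim reduces to (ii).)

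I do not expect any real obstacle here: the statement is essentially a bookkeeping exercise. The only points requiring vigilance are the loop-counting convention in (ii) and (iii) and the fact that $A_{(G,\eps)}$ records the total weight of a parallel class of edges rather than individual edges, which is exactly why merging parallel edges with additive weights is invisible to the Laplacian.
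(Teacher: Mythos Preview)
Your argument is correct and is precisely the approach the paper takes: the sentence immediately preceding the lemma records the identity $L_{(G,\ups,\eps)}=W_\ups^{-1}(D_{(G,\eps)}-A_{(G,\eps)})$ and asserts that the lemma follows from it, without spelling out the individual checks. Your treatment of the loop convention in (ii) and the parallel-edge merge in (iii) fills in exactly the details the paper leaves implicit.
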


Another operator we will be interested in is the edge Laplacian, which we define next.

\begin{definition}\label{def:elap}
The \textit{edge Laplacian} $K_{(G,\ups,\eps)}:L^2(E,\eps)\to L^2(E,\eps)$ of a weighted multigraph $(G,\ups,\eps)$ is
$$K_{(G,\ups,\eps)}=\grad\ndiv.$$
\end{definition}

Note that $K_{(G,\ups,\eps)}$ is also self-adjoint. Moreover, because $L_{(G,\ups,\eps)}=\ndiv\grad$ and $K_{(G,\ups,\eps)}=\grad\ndiv$, they share the same nonzero spectrum. The edge Laplacian Rayleigh quotient of $\mathbf F\in L^2(E,\eps)$ is given by
$$\frac{\langle K_{(G,\ups,\eps)}\mathbf F,\mathbf F\rangle}{\langle \mathbf F,\mathbf F\rangle} = \frac{\langle \ndiv\mathbf F,\ndiv\mathbf F\rangle}{\langle \mathbf F,\mathbf F\rangle} = \frac{\int_V |\ndiv\mathbf F|^2d\ups}{\int_E \lVert\mathbf F\rVert^2d\eps}.$$

We next define two important subspaces of $L^2(E,\eps)$. Given a weighted graph $(G,\ups,\eps)$, the \textit{cycle space} is $\cyc(E,\eps) = \ker\ndiv$ and the \textit{cut space} is $\cut(E,\eps) = \im \grad$. See \cite[Section II.3]{Bollobas} for a treatment of these spaces in the unweighted case. Since $\ndiv$ and $\grad$ are adjoint by Proposition~\ref{prop:adj}, the cycle space and the cut space are orthogonal complements.

Moreover, $\ker K_{(G,\ups,\eps)}=\cyc(E,\eps)$, noting that $\ker K_{(G,\ups,\eps)}\supseteq \cyc(E,\eps)$ since $K_{(G,\ups,\eps)}=\grad\ndiv$, and $\ker K_{(G,\ups,\eps)}\subseteq \cyc(E,\eps)$ since $$\langle K_{(G,\ups,\eps)}\mathbf F,\mathbf F\rangle = \langle \ndiv \mathbf F,\ndiv\mathbf F\rangle$$
is zero if and only if $\mathbf F\in \cyc(E,\eps)$. Since $K_{(G,\ups,\eps)}$ is self-adjoint, $\im K_{(G,\ups,\eps)}$ is the orthogonal complement of $\ker K_{(G,\ups,\eps)}=\cyc(E,\eps)$, and so $\im K_{(G,\ups,\eps)} = \cut(E,\eps)$. If $G$ has $n$ vertices, $m$ edges and $k$ components, then the dimension of $\cut(E,\eps)$ is $n-k$, the number of nonzero eigenvalues of $L_{(G,\ups,\eps)}$, and the dimension of $\cyc(E,\eps)$ is $m-n+k$. The former is zero if and only if $G$ has no nonloop edges, and the latter is zero if and only if $G$ is a forest.

The following proposition, which is known in the unweighted case, states that $\cyc(E,\eps)$ and $\cut(E,\eps)$ are spanned by certain vector fields known as \textit{cycles} and \textit{cuts}, respectively, explaining their names. We include a proof here for completeness, which interestingly uses deletion and contraction.

\begin{proposition}\label{prop:cyccut}
Let $(G,\ups,\eps)$ be a weighted graph. Then
\begin{itemize}
    \item[(i)] $\cyc(E,\eps)$ is equal to the subspace of $L^2(E,\eps)$ spanned by vector fields of the form $\indb_{\frac{C}{\eps}}$ for simple closed curves $\frac{C}{\eps}$, and
    \item[(ii)] $\cut(E,\eps)$ is equal to the subspace of $L^2(E,\eps)$ spanned by vector fields of the form $\indb_{\grad S}$ for vertex subsets $S\subseteq V$.
\end{itemize}
\end{proposition}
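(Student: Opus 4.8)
The plan is to read off (ii) directly from the identity $\grad\ind_S=\indb_{\grad S}$, and to prove (i) by induction on the number of edges of $G$, peeling off one edge at a time. For (ii): since $\grad$ is linear and $L^2(V,\ups)$ is spanned by the characteristic functions $\ind_v$, $v\in V$, the cut space $\cut(E,\eps)=\im\grad$ is spanned by the vector fields $\grad\ind_v=\indb_{\grad\{v\}}$, each of which already has the required form $\indb_{\grad S}$; conversely every $\indb_{\grad S}$ equals $\grad\ind_S\in\im\grad$. Hence $\cut(E,\eps)=\spam\{\indb_{\grad S}:S\subseteq V\}$, with no induction needed.

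For (i), the inclusion $\supseteq$ is immediate: for a simple closed curve $\frac{C}{\eps}$ the boundary $\ndiv\frac{C}{\eps}$ vanishes, so $\ndiv\indb_{\frac{C}{\eps}}=\ind_{\ndiv\frac{C}{\eps}}=0$ and thus $\indb_{\frac{C}{\eps}}\in\ker\ndiv=\cyc(E,\eps)$. For the reverse inclusion I would induct on $|E(G)|$, the case $|E(G)|=0$ being trivial since then $L^2(E,\eps)=0$. In the inductive step, fix an edge $e=uv$ and identify $L^2(E\setminus e,\eps)$ with the subspace of $L^2(E,\eps)$ of vector fields vanishing on $e$; from the defining formula for $\ndiv$, the divergence operators of $G$ and of $G-e$ agree on this subspace, so the cycle space $\cyc(E\setminus e,\eps)$ of $G-e$ equals $\cyc(E,\eps)\cap L^2(E\setminus e,\eps)$, and every simple closed curve of $G-e$ is one of $G$. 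It then suffices, given $\mathbf F\in\cyc(E,\eps)$, to subtract from it a linear combination of vectors $\indb_{\frac{C}{\eps}}$ with each $C$ a simple closed curve of $G$, so that the remainder $\mathbf F'$ vanishes on $e$: for then $\mathbf F'\in\cyc(E\setminus e,\eps)$ and the inductive hypothesis finishes the job. If $e$ lies in a simple closed curve $C$ of $G$ that traverses $e$ exactly once --- this includes a loop, taken as its length-one walk --- then the $e$-component of $\indb_{\frac{C}{\eps}}$ is a nonzero multiple of the spanning vector of the one-dimensional space attached to $e$, so one can subtract a scalar multiple of $\indb_{\frac{C}{\eps}}$ to kill the $e$-component of $\mathbf F$ while staying inside $\cyc(E,\eps)$ (since $\indb_{\frac{C}{\eps}}\in\cyc(E,\eps)$). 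Otherwise $e$ is a bridge, and the discrete divergence theorem (Proposition~\ref{prop:div}), applied with $S$ the vertex set of the component of $u$ in $G-e$, whose only outgoing edge in $G$ is $e$, forces $\mathbf F(e)=0$ outright, so one may take $\mathbf F'=\mathbf F$.

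I expect the only genuine work to be the case where $e$ is neither a loop nor a bridge: there $C$ is produced by prepending $\vec e$ to a (shortest, hence simple) path from $v$ to $u$ in $G-e$, and one must check this is a simple closed curve in the paper's sense --- its vertex sequence is $u,v$, then the interior vertices of the path, then $u$, with $u$ the unique repeat, using $u\neq v$ --- and that it uses $e$ exactly once, so that $\indb_{\frac{C}{\eps}}$ is nonzero on $e$. The remaining points are routine: that $\ndiv$ is unchanged on vector fields vanishing on $e$, and that for a bridge the divergence theorem leaves $\mathbf F(e)$ as the sole surviving term. As an alternative to deletion in the non-loop, non-bridge case one could instead contract $e$: restriction to $E\setminus e$ is an isomorphism from $\cyc(E,\eps)$ onto the cycle space of the contraction $G/e$, under which a simple closed curve of $G/e$ pulls back, after adding a suitable multiple of $\indb_{\vec e}$, to a simple closed curve of $G$ --- this is where contraction would enter the argument.
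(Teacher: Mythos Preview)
Your proof is correct. For part (i) it is essentially the paper's argument: both delete an edge lying on a cycle and use that $\cyc(E\setminus e,\eps)$ sits inside $\cyc(E,\eps)$ as the subspace vanishing on $e$. The paper inducts on $\dim\cyc(E,\eps)$ rather than on $|E|$, which lets it always choose $e$ on a cycle and thereby skip your separate bridge case; your handling of bridges via the divergence theorem is a fine alternative and costs only a line.

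For part (ii) you take a genuinely shorter route. The paper proves (ii) by induction using \emph{contraction}: it identifies $\cut(E/e,\eps)$ with the subspace of $\cut(E,\eps)$ vanishing on a nonloop edge $e$, and augments a cut basis of the contraction by a single $\indb_{\grad S}$ separating the endpoints of $e$. Your observation that the singleton cuts $\indb_{\grad\{v\}}=\grad\ind_v$ already span $\im\grad$ gives the result in one line with no induction. The paper's approach, while longer, is deliberately parallel to its proof of (i) --- deletion for the cycle space, contraction for the cut space --- foreshadowing the deletion-contraction theme of the paper; your direct argument trades that structural symmetry for brevity.
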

\begin{proof}
We first note that every vector field of the form $\indb_{\frac{C}{\eps}}$ for some simple closed curve $\frac{C}{\eps}$ lies in $\cyc(E,\eps)$, as 
$$\ndiv \indb_{\frac{C}{\eps}}=\ind_{\ndiv\frac{C}{\eps}}=0.$$
Additionally, every vector field of the form $\indb_{\grad S}$ for some $S\subseteq V$ satisfies
$$\indb_{\grad S}=\grad\ind_{S}\in \cut(E,\eps).$$
It remains to prove that such vector fields span their respective subspaces.

To prove (i), we will proceed by induction on the dimension of $\cyc(E,\eps)$. The base case when $\cyc(E,\eps)$ has dimension zero is immediate.

For the inductive step, if $\cyc(E,\eps)$ has nonzero dimension, then $G$ is not a forest. Then there exists some closed walk $C$ directed along a cycle of $G$. Let $e$ be an edge in $C$. Note $\cyc(E-e,\eps)$ is naturally isomorphic to the subspace of $\cyc(E,\eps)$ evaluating at $e$ to the zero vector. Then a cycle basis for $\cyc(E,\eps)$ can be found by taking a cycle basis of $\cyc(E-e,\eps)$, which exists by the inductive hypothesis, together with $\indb_{\frac{C}{\eps}}$.

To prove (ii), we will similarly proceed by induction on the dimension of $\cut(E,\eps)$. The base case when $\cut(E,\eps)$ has dimension zero is immediate.

For the inductive step, if $\cut(E,\eps)$ has nonzero dimension, then $G$ has some nonloop edge $e$. Note $\cut(E/e,\eps)$, where $E/e$ is the edge set of $G/e$, is naturally isomorphic to the subspace of $\cut(E,\eps)$ evaluating at $e$ to the zero vector. Moreover, cuts in $\cut(E/e,\eps)$ are mapped to under this isomorphism by the cuts in $\cut(E,\eps)$ not separating the endpoints of $e$. Then a cut basis for $\cut(E,\eps)$ can be found by taking a cut basis of $\cut(E/e,\eps)$, which exists by the inductive hypothesis, together with $\indb_{\grad S}$, where $S\subseteq V$ is chosen to contain exactly one of the two endpoints of $e$.
\end{proof}

We conclude this section with a proposition giving some equivalent characterisations of the cut space. We will also call vector fields in $\cut(E,\eps)$ \textit{conservative}, noting that Proposition~\ref{prop:cons} is analogous to a theorem on conservative vector fields in classical vector calculus.

\begin{proposition}\label{prop:cons}
Let $(G,\ups,\eps)$ be a weighted graph and let $\mathbf F\in L^2(E,\eps)$. Then the following are equivalent:
\begin{itemize}
    \item[(i)]
    $\mathbf F\in \cut(E,\eps)$.
    \item[(ii)]
    $\int_{\frac{ C_1}{\eps}}\mathbf  F\cdot d\eps=\int_{\frac{ C_2}{\eps}}\mathbf F\cdot d\eps$ whenever $\frac{C_1}{\eps},\frac{C_2}{\eps}$ are two curves starting at the same vertex $u$ and ending at the same vertex $v$.
    \item[(iii)]
    $\int_{\frac{ C}{\eps}} \mathbf F\cdot d\eps = 0$ for all simple closed curves $\frac{C}{\eps}$.
\end{itemize}
\begin{proof}
We will show that (i) implies (ii), (ii) implies (iii) and (iii) implies (i).

If $\mathbf F\in \cut(E,\eps)$, then $\mathbf F = \grad f$ for some $f\in L^2(V,\ups)$. If $u,v\in V$ and $\frac{C_1}{\eps}$ and $\frac{C_2}{\eps}$ are two curves starting at $u$ and ending at $v$, then by Proposition~\ref{prop:grad},
$$\int_{\frac{C_1}{\eps}}\mathbf F\cdot d\eps = f(v)-f(u) = \int_{\frac{C_2}{\eps}} \mathbf F\cdot d\eps,$$
and so (i) implies (ii).

If (ii) held, then for any simple closed curve $\frac{C}{\eps}$ from $v$ to $v$, we have
$$\int_{\frac{C}{\eps}}\mathbf F\cdot d\eps = \int_{0}\mathbf F\cdot d\eps = 0,$$
where $0$ is used to denote the curve from $v$ to $v$ obtained from the empty walk. So (ii) would imply (iii).

Finally, if
$$\int_{\frac{C}{\eps}}\mathbf F\cdot d\eps = \int_E \mathbf F\cdot \indb_{\frac{C}{\eps}} d\eps = 0$$
for all simple closed curves $\frac{C}{\eps}$, then $\mathbf F$ lies in the orthogonal complement of $\cyc(E,\eps)$, recalling by Proposition~\ref{prop:cyccut} that $\cyc(E,\eps)$ is spanned by the characteristic functions of simple closed curves. Since $\cut(E,\eps)$ is the orthogonal complement of $\cyc(E,\eps)$, it follows that (iii) implies (i).
\end{proof}

\end{proposition}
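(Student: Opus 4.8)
The plan is to establish the cyclic chain of implications (i)~$\Rightarrow$~(ii)~$\Rightarrow$~(iii)~$\Rightarrow$~(i); each link is short given the discrete vector calculus already set up, and together they give the full equivalence.

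For the first two links I would argue directly. If $\mathbf F\in\cut(E,\eps)$, write $\mathbf F=\grad f$ with $f\in L^2(V,\ups)$; then the discrete gradient theorem (Proposition~\ref{prop:grad}) gives $\int_{\frac{C}{\eps}}\mathbf F\cdot d\eps=f(v)-f(u)$ for every curve $\frac{C}{\eps}$ from $u$ to $v$, which manifestly depends only on $u$ and $v$, proving (ii). For (ii)~$\Rightarrow$~(iii), note that a simple closed curve $\frac{C}{\eps}$ runs from some vertex $v$ back to $v$, and so does the empty walk at $v$, whose integral is $0$; since these two curves share both endpoints, hypothesis (ii) forces $\int_{\frac{C}{\eps}}\mathbf F\cdot d\eps=0$.

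The remaining link (iii)~$\Rightarrow$~(i) is where the earlier structure theory enters. By the definition of integration over a curve, $\int_{\frac{C}{\eps}}\mathbf F\cdot d\eps=\langle\mathbf F,\indb_{\frac{C}{\eps}}\rangle$, so (iii) says precisely that $\mathbf F$ is orthogonal to $\indb_{\frac{C}{\eps}}$ for every simple closed curve $\frac{C}{\eps}$. By Proposition~\ref{prop:cyccut}(i) these vector fields span $\cyc(E,\eps)$, hence $\mathbf F\in\cyc(E,\eps)^{\perp}$; and since $\ndiv$ and $\grad$ are adjoint (Proposition~\ref{prop:adj}), $\cyc(E,\eps)=\ker\ndiv$ and $\cut(E,\eps)=\im\grad$ are orthogonal complements, so $\mathbf F\in\cut(E,\eps)$, as desired. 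I do not expect a real obstacle here: the argument is essentially the discrete transcription of the classical theorem on conservative vector fields. The only points that merit a moment's care are that hypothesis (ii) must be applied in the slightly degenerate situation of a simple closed curve versus the empty walk at the same vertex, and that the passage from ``integrates to zero around every cycle'' to ``lies in the cut space'' is exactly the step that leans on Proposition~\ref{prop:cyccut}(i) together with the cut/cycle orthogonality.
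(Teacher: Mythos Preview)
Your proof is correct and follows essentially the same route as the paper: the cyclic chain (i)~$\Rightarrow$~(ii)~$\Rightarrow$~(iii)~$\Rightarrow$~(i), using the discrete gradient theorem for the first link, the empty walk for the second, and Proposition~\ref{prop:cyccut}(i) together with the cut/cycle orthogonality for the third. The only cosmetic difference is that you spell out the adjointness argument for $\cyc(E,\eps)^\perp=\cut(E,\eps)$, which the paper had already recorded earlier.
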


\section{Deletion-contraction from weighted spanning forests}\label{sec:delcon}
In this section we will prove a deletion-contraction relation for the weighted Laplacian characteristic polynomial. It will be helpful to consider the weighted Laplacian characteristic polynomial scaled by the product of vertex weights.

\begin{definition}
For a weighted multigraph $(G,\ups,\eps)$, define the polynomial
$$\p_{(G,\ups,\eps)}(t)=\det(tW_{\ups}-L_{(G,\eps)})= \left(\prod_{v\in V}\ups(v)\right)\det(tI-L_{(G,\ups,\eps)}).$$
\end{definition}

We will give a combinatorial interpretation for the coefficients of $P_{(G,\ups,\eps)}(t)$ in terms of the spanning forests of $G$. Given a spanning forest $F$ of $G$, define its \textit{weight} to be
$$w(F)=\prod_{T\in F} \left(\sum_{v\in V(T)}\ups(v)\prod_{e\in E(T)}\eps(e)\right),$$
where the product is over all connected components $T$ of $F$. Then the following theorem holds.

\begin{theorem}[Weighted matrix-forest theorem]\label{the:mforest}
Let $(G,\ups,\eps)$ be a weighted multigraph on $n$ vertices with $$\p_{(G,\ups,\eps)}(t)=\sum_{k=1}^{n}(-1)^{n-k}c_kt^k.$$
Then $$c_k=\sum_F w(F),$$
where the sum is over all $k$-component spanning forests $F$.
\end{theorem}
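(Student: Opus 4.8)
The plan is to expand the determinant $\det(tW_\ups - L_{(G,\eps)})$ by multilinearity in its columns and then apply the principal minors matrix-tree theorem (Theorem~\ref{the:mtree}).

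Index the rows and columns of all matrices by $V$. The column of $tW_\ups - L_{(G,\eps)}$ indexed by $v$ equals $t\ups(v)\mathbf e_v$ minus the corresponding column of $L_{(G,\eps)}$, where $\mathbf e_v$ is the standard basis vector. Expanding $\det$ as a multilinear function of its columns and pulling out the scalars $t\ups(v)$ gives
$$\p_{(G,\ups,\eps)}(t) = \sum_{S\subseteq V} t^{|S|}\Bigl(\prod_{v\in S}\ups(v)\Bigr)\det M_S,$$
where $M_S$ agrees with $-L_{(G,\eps)}$ in the columns indexed by $V\setminus S$ and has $\mathbf e_v$ in the column indexed by each $v\in S$. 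A Leibniz-formula computation shows $\det M_S = (-1)^{n-|S|}\det L^{\hat S}_{(G,\eps)}$: any permutation contributing to $\det M_S$ must fix every index in $S$, hence reduces to a permutation of $V\setminus S$, so $\det M_S = \det\bigl((-L_{(G,\eps)})^{\hat S}\bigr) = (-1)^{n-|S|}\det L^{\hat S}_{(G,\eps)}$. Matching the coefficient of $t^k$ against $\p_{(G,\ups,\eps)}(t)=\sum_k(-1)^{n-k}c_kt^k$ yields
$$c_k = \sum_{\substack{S\subseteq V\\ |S|=k}}\Bigl(\prod_{v\in S}\ups(v)\Bigr)\det L^{\hat S}_{(G,\eps)};$$
the $k=0$ term vanishes because the rows of $L_{(G,\eps)}$ sum to zero, consistent with the stated range $k\ge 1$.

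Next, apply Theorem~\ref{the:mtree} to rewrite $\det L^{\hat S}_{(G,\eps)} = \sum_F \prod_{e\in E(F)}\eps(e)$, the sum over $S$-rooted spanning forests $F$. Since an $S$-rooted spanning forest has exactly $|S|$ components, specifying a set $S$ with $|S|=k$ together with an $S$-rooted spanning forest is the same as specifying a $k$-component spanning forest $F$ together with a choice $\rho$ of one root vertex in each component of $F$, where $S$ is the set of chosen roots. Reindexing the double sum accordingly,
$$c_k = \sum_{\substack{F:\ k\text{-component}\\ \text{spanning forest}}}\Bigl(\prod_{e\in E(F)}\eps(e)\Bigr)\ \sum_{\rho}\ \prod_{T\in F}\ups(\rho(T)),$$
where $T$ ranges over the components of $F$ and $\rho$ over all root choices. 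By distributivity the inner sum equals $\prod_{T\in F}\bigl(\sum_{v\in V(T)}\ups(v)\bigr)$, and combining this with $\prod_{e\in E(F)}\eps(e)=\prod_{T\in F}\prod_{e\in E(T)}\eps(e)$ produces $\prod_{T\in F}\bigl(\sum_{v\in V(T)}\ups(v)\prod_{e\in E(T)}\eps(e)\bigr)=w(F)$, which is the claimed formula.

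This is essentially a bookkeeping argument; the steps requiring the most care are the sign $(-1)^{n-|S|}$ and the emergence of the principal submatrix $L^{\hat S}_{(G,\eps)}$ in the multilinear expansion, and the reindexing of the double sum from (root set, rooted forest) to (forest, root choice) followed by the distributivity step. I do not anticipate a genuine obstacle beyond tracking these indices and signs correctly.
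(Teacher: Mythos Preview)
Your proof is correct and follows essentially the same route as the paper: both arguments reduce to the identity $c_k=\sum_{|S|=k}\bigl(\prod_{v\in S}\ups(v)\bigr)\det L^{\hat S}_{(G,\eps)}$, then apply Theorem~\ref{the:mtree} and reindex from (root set, $S$-rooted forest) to ($k$-component forest, choice of roots). The only cosmetic difference is that you obtain this identity by a direct multilinear column expansion of $\det(tW_\ups-L_{(G,\eps)})$, whereas the paper quotes the standard fact that the $t^k$-coefficient of a characteristic polynomial is an alternating sum of principal minors of $L_{(G,\ups,\eps)}$ and then converts these to minors of $L_{(G,\eps)}$ by pulling out the row factors $\ups(v)^{-1}$.
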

\begin{proof}
It is known that the coefficient of $t^k$ in the characteristic polynomial of $L_{(G,\ups,\eps)}$ multiplied by $(-1)^{n-k}$ is the sum of all $(n-k)\times (n-k)$ principal minors of $L_{(G,\ups,\eps)}$, e.g. by considering the Laplace expansion of $\det(tI-L_{(G,\ups,\eps)})$.

For $S\subseteq V$ of cardinality $k$, the $(n-k)\times (n-k)$ principal submatrix of $L_{(G,\ups,\eps)}$ obtained by deleting the rows and columns indexed by $S$ is equal to the corresponding principal submatrix of $L_{(G,\eps)}$ with each row corresponding to $v\in S^c$ divided by $\ups(v)$. It follows then that the associated $(n-k)\times(n-k)$ principal minor of $L_{(G,\ups,\eps)}$ is equal to the corresponding principal minor of $L_{(G,\eps)}$ divided by $\prod_{v\in S^c}\ups(v)$.

Therefore,
$$c_k = \left(\prod_{v\in V}\ups(v)\right)\sum_{\substack{S\subseteq V \\ |S|=k}} \frac{\det L_{(G,\eps)}^{\hat S}}{\prod_{v\in S^c}\ups(v)}=\sum_{\substack{S\subseteq V\\|S|=k}}\left(\prod_{v\in S}\ups(v)\right)\det L_{(G,\eps)}^{\hat S}.$$
By Theorem~\ref{the:mtree}, the principal minors matrix-tree theorem, this is equal to the sum over all $k$-rooted spanning forests of $G$ of the product of root weights and edge weights.

For a $k$-component spanning forest $F$ of $G$, a choice of roots is made by choosing a single vertex $v$ of each component $T$ of $G$. The sum over all choices of roots for $F$ of the product of root weights is equal to $\prod_{T\in F}\sum_{v\in V(T)}\ups(v)$.

Putting this all together, we find that
$$c_k = \sum_{F} \left(\left(\prod_{T\in F} \sum_{v\in V(T)}\ups(v)\right)\prod_{e\in E(F)}\eps(e)\right) = \sum_F w(F),$$
where the sum is over all $k$-component spanning forests $F$ of $G$, as desired.
\end{proof}

Various polynomials and invariants associated with a graph satisfy a recurrence involving the \textit{deletion} and \textit{contraction} of an edge. For example, the number of spanning trees $\tau(G)$ of a graph $G$ satisfies for every nonloop edge $e$
$$\tau(G)=\tau(G-e) +\tau(G/e),$$
where $G-e$ is the graph obtained from $G$ by removing the edge $e$, and $G/e$ is the graph obtained from $G$ by removing $e$ and formally identifying its endpoints as a single vertex in $G/e$. The deletion-contraction formula for $\tau(G)$ follows from the existence of a bijection between the spanning trees of $G$ not containing $e$ and the spanning trees of $G-e$, together with a bijection between the spanning trees of $G$ containing $e$ and the spanning trees of $G/e$.

The combinatorial Laplacian and normalised Laplacian characteristic polynomials are not known to satisfy a deletion-contraction recurrence. The introduction of vertex weights, however, gives a deletion-contraction recurrence for the weighted Laplacian characteristic polynomial. If $\ups$ is a weight function on the vertices of $G$, then $\ups/e$ is the weight function on the vertices of $G/e$ where the weight of the new vertex obtained by contracting $e$ is equal to the sum of the weights of its constituents under $\ups$, and the weights of the other vertices are given by their original weights under $\ups$.

\begin{example}
Let $(G,\ups,\eps)$ the the weighted graph depicted on the left, and let $e$ be the edge of weight $3$ in $(G,\ups,\eps)$. Then $(G/e,\ups/e,\eps)$, obtained from $(G,\ups,\eps)$ by contracting the edge $e$, is the weighted graph depicted on the right. Note that the endpoints of $e$, which had weights $2$ and $1$ in $(G,\ups,\eps)$, are replaced by a single vertex of weight $2+1=3$ in $(G/e,\ups/e,\eps)$.
\begin{center}
\begin{tikzpicture}
\node[shape=circle, draw=black](1) at (0,0) {\scriptsize 3};
\node[shape=circle, draw=black](2) at (0,1.4) {\scriptsize 2};
\node[shape=circle,  draw=black] (3) at (1.4,0) {\scriptsize 1};
\node[shape=circle,  draw=black] (4) at (1.4,1.4) {\scriptsize 2};
\path[-] (1) edge (3);
\path[-] (3) edge (4);
\path[-] (2) edge (4);
\path[-] (3) edge (2);
\node[] at (.7,1.6) {\scriptsize $1$};
\node[] at (1.6,.7) {\scriptsize$3$};
\node[] at (.55,.55) {\scriptsize$1$};
\node[] at (.7,-.2) {\scriptsize$4$};
\node[] at (.7,-.6) {$(G,\ups,\eps)$};
\end{tikzpicture}\quad\quad
\begin{tikzpicture}
\node[shape=circle, draw=black](1) at (0,0) {\scriptsize 3};
\node[shape=circle, draw=black](2) at (0,1.4) {\scriptsize 2};
\node[shape=circle,  draw=black] (3) at (1.4,.7) {\scriptsize 3};
\path[-] (1) edge (3);
\path[-, bend left] (2) edge (3);
\path[-, bend left] (3) edge (2);
\node[] at (.9,1.5) {\scriptsize $1$};
\node[] at (.5,.6) {\scriptsize$1$};
\node[] at (.7,.15) {\scriptsize$4$};
\node[] at (.7,-.6) {$(G/e,\ups/e,\eps)$};
\end{tikzpicture}
\end{center}
\end{example}

With these definitions in mind, we will show the following.

\begin{theorem}\label{the:delcon}
Let $(G,\ups,\eps)$ be a weighted multigraph and let $e$ be a nonloop edge. Then
$$\p_{(G,\ups,\eps)}(t)=\p_{(G-e,\ups,\eps)}(t)-\eps(e)\p_{(G/e,\ups/e,\eps)}(t).$$
\end{theorem}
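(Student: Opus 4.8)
The cleanest route is to prove the deletion-contraction identity coefficient by coefficient, using the weighted matrix-forest theorem (Theorem \ref{the:mforest}) to translate everything into a statement about spanning forests. So I would fix $k$ and compare the coefficient $c_k$ of $(-1)^{n-k}t^k$ in each of the three polynomials.

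Let me think about what needs to happen. On the graph $G$ with $n$ vertices, the spanning forests with $k$ components split into two classes according to whether the nonloop edge $e = xy$ is used or not. Those not using $e$ are exactly the $k$-component spanning forests of $G-e$, and the weight $w(F)$ is literally the same in $(G,\ups,\eps)$ and $(G-e,\ups,\eps)$; so these contribute $\p_{(G-e,\ups,\eps)}(t)$ on the nose. The forests of $G$ that do use $e$ are where the contraction term comes from: contracting $e$ gives a bijection between $k$-component spanning forests $F$ of $G$ containing $e$ and $k$-component spanning forests $F'$ of $G/e$, where $F' = F/e$ has $n-1$ vertices. Under this bijection the component $T$ of $F$ containing $e$ maps to the component $T' = T/e$ of $F'$; the edge set loses $e$, contributing the missing factor $\eps(e)$, and the vertex-weight sum $\sum_{v \in V(T)}\ups(v)$ is unchanged because $\ups/e$ of the merged vertex is the sum of the two old weights. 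All other components are untouched. Hence $w_{(G,\ups,\eps)}(F) = \eps(e)\, w_{(G/e,\ups/e,\eps)}(F')$. Summing over all such $F$ gives $\eps(e)$ times the coefficient of $(-1)^{(n-1)-k}t^k$ in $\p_{(G/e,\ups/e,\eps)}(t)$.

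Now I have to be careful about the sign. A $k$-component spanning forest of $G$ contributes to $c_k$ with sign $(-1)^{n-k}$, while the corresponding $k$-component forest of $G/e$ contributes to its $c_k$ with sign $(-1)^{(n-1)-k}$; these differ by a factor of $-1$, which is exactly the minus sign in front of $\eps(e)\p_{(G/e,\ups/e,\eps)}(t)$. Assembling: for each $k$, the coefficient of $t^k$ on the left equals the coefficient coming from $e$-avoiding forests (the $\p_{(G-e,\ups,\eps)}(t)$ term) plus the coefficient from $e$-using forests, and the latter is $(-1)^{n-k}\eps(e)\sum_{F'} w(F') = -\eps(e)\cdot(-1)^{(n-1)-k}\sum_{F'}w(F')$, matching $-\eps(e)\p_{(G/e,\ups/e,\eps)}(t)$. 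Since this holds for every $k$, the polynomial identity follows.

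The main obstacle is nothing deep but purely bookkeeping: being scrupulous that the contraction bijection genuinely sends $k$-component forests to $k$-component forests (the number of components is preserved because $e$ is a nonloop edge joining two vertices in the same tree of $F$, so contracting it does not merge or split components), that it is a bijection (its inverse ``uncontracts'' the merged vertex and re-inserts $e$), and that the vertex-weight and edge-weight factors transform exactly as claimed — in particular the additivity of $\ups/e$ at the merged vertex is what makes the vertex-weight factor invariant. I would also note the degenerate cases are covered automatically: if $e$ is a bridge of $G$ then no $k$-component forest for small $k$ uses it in a way that... actually no special casing is needed, the argument above is uniform in all cases once $e$ is a nonloop edge.
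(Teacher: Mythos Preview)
Your proposal is correct and follows essentially the same approach as the paper: split the $k$-component spanning forests of $G$ according to whether they contain $e$, use the obvious bijections to $G-e$ and $G/e$, verify $w(F)=\eps(e)w(F/e)$ via the additivity of $\ups/e$ at the merged vertex, and then invoke the weighted matrix-forest theorem coefficientwise with the sign bookkeeping you describe. The paper's write-up is essentially identical, including the observation that the sign discrepancy $(-1)^{n-k}$ versus $(-1)^{(n-1)-k}$ produces the minus sign in front of the contraction term.
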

\begin{proof}
We first note that there exists a bijection between the $k$-component spanning forests of $G$ not containing $e$ and the $k$-component spanning forests of $G-e$, by sending a spanning forest $F$ to its image in $G-e$. Since vertex weights and edge weights are preserved, this bijection preserves the weight of each spanning forest.

Additionally, since $e$ is not a loop, there is a bijection between the $k$-component spanning forests of $G$ containing $e$ and the $k$-component spanning forests of $G/e$, by sending a spanning forest $F$ to its contraction $F/e$. The vertex and edge weights of each component of $F$ not containing $e$ are preserved.

As for the component $T$ of $F$ containing $e$, its image $T/e$ under contraction has vertex weights preserved, except the image of the endpoints of $e$ are replaced with a single vertex whose weight is the sum of the weights of the pair of endpoints. All edges of $T$ except $e$ appear in $T/e$ with edge weights preserved. Therefore, we have
$$\sum_{v\in V(T)}\ups(v)\prod_{e'\in E(T)}\eps(e') = \eps(e) \left(\sum_{v\in V(T/e)}\ups(v) \prod_{e'\in E(T/e)}\eps(e')\right).$$
It follows then that for all $k$-component spanning forests $F$ of $G$ containing $e$,
$$w(F)=\eps(e)w(F/e).$$

Let $G$ have $n$ vertices, and let $(-1)^{n-k}c_k^{(G,\ups,\eps)}$, $(-1)^{n-k}c_k^{(G-e,\ups,\eps)}$ and $(-1)^{n-k-1}c_k^{(G/e,\ups/e,\eps)}$ denote the coefficients of $t^k$ in $\p_{(G,\ups,\eps)}(t)$, $\p_{(G-e,\ups,\eps)}(t)$ and $\p_{(G/e,\ups/e,\eps)}(t)$, respectively. From the bijections constructed together with Theorem~\ref{the:mforest}, it follows that
$$c_k^{(G,\ups,\eps)}=c_k^{(G-e,\ups,\eps)}+\eps(e)c_k^{(G/e,\ups/e,\eps)}.$$
Since this holds for all $k$, we conclude that
$$\p_{(G,\ups,\eps)}(t) = \p_{(G-e,\ups,\eps)}(t) - \eps(e)\p_{(G/e,\ups/e,\eps)}(t).$$
\end{proof}

\section{Interlacing theorems}\label{sec:interlace}
In this section, we will prove interlacing theorems relating to deletion and contraction for weighted graphs, and then apply these to obtain interlacing results for combinatorial and normalised Laplacian eigenvalues.

We begin by proving a lemma relating the weighted Laplacian eigenvalues of a weighted graph $(G,\ups,\eps)$ and those of $(G-e,\ups,\eps)$. We follow the proof of Grone, Merris and Sunder in \cite[Theorem 4.1]{GMS}, which is the special case when the graph is unweighted.

\begin{lemma}\label{lem:G-e}
Let $(G,\ups,\eps)$ be a weighted multigraph and let $e$ be an edge. Let $\lambda_1\le\dots\le\lambda_n$ and $\mu_1\le\dots\le\mu_n$ denote the weighted Laplacian eigenvalues of $(G,\ups,\eps)$ and $(G-e,\ups,\eps)$, respectively. Then
$$\mu_1\le\lambda_1\le\dots\le\mu_n\le\lambda_n.$$
\end{lemma}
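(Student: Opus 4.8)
The plan is to exhibit the weighted Laplacian $L_{(G-e,\ups,\eps)}$ as a rank-one perturbation of $L_{(G,\ups,\eps)}$ and then invoke a standard interlacing-by-rank-one result, which here will take the form of Cauchy's interlacing theorem (Theorem~\ref{the:cauchy}) with $k=1$. Concretely, write $e=uv$ and recall that, as a matrix in the basis of characteristic functions of vertices, $L_{(G,\ups,\eps)} = W_\ups^{-1}L_{(G,\eps)}$ and $L_{(G-e,\ups,\eps)} = W_\ups^{-1}L_{(G-e,\eps)}$. Since $L_{(G,\eps)} - L_{(G-e,\eps)} = \eps(e)(\indb_u - \indb_v)(\indb_u-\indb_v)^{\mathsf T}$ on the right-hand side (where $\indb_u$ here denotes the standard basis vector), the difference $L_{(G,\ups,\eps)} - L_{(G-e,\ups,\eps)}$ is a positive semidefinite rank-one operator when viewed correctly. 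The cleanest way to see the interlacing is to work in the inner product space $L^2(V,\ups)$, where $L_{(G,\ups,\eps)}$ is self-adjoint: there $\langle L_{(G,\ups,\eps)}f,f\rangle - \langle L_{(G-e,\ups,\eps)}f,f\rangle = \eps(e)(f(v)-f(u))^2$, the contribution of the single edge $e$ to $\int_E\lVert\grad f\rVert^2 d\eps$. Hence $L_{(G,\ups,\eps)} = L_{(G-e,\ups,\eps)} + P$ where $P$ is a self-adjoint, positive semidefinite, rank-one operator on $L^2(V,\ups)$.

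Next I would convert this rank-one comparison into the stated interlacing. Following Grone--Merris--Sunder, let $W = L^2(V,\ups)$, let $A = L_{(G,\ups,\eps)}$, and restrict to the orthogonal complement $V$ of a one-dimensional subspace on which $P$ vanishes together with its span; more precisely, since $P = \eps(e)\langle \cdot, h\rangle h$ for a suitable $h\in L^2(V,\ups)$ (namely $h$ proportional to $\indb_v - \indb_u$), the restriction of $A$ to the $\ups$-orthogonal complement $h^\perp$ agrees with the restriction of $L_{(G-e,\ups,\eps)}$ to $h^\perp$. Applying Cauchy interlacing (Theorem~\ref{the:cauchy}) with the isometry $\iota: h^\perp\hookrightarrow W$ to the operator $A = L_{(G,\ups,\eps)}$ gives eigenvalues $\nu_1\le\dots\le\nu_{n-1}$ of $\iota^*A\iota$ satisfying $\lambda_i\le\nu_i\le\lambda_{i+1}$. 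Doing the same with $A' = L_{(G-e,\ups,\eps)}$ and the same $\iota$ gives the same operator $\iota^*A'\iota = \iota^*A\iota$ (since $A$ and $A'$ agree on $h^\perp$ and $h^\perp$ is invariant only up to the statement that the compressions coincide — this needs a small check using Remark~\ref{rem:preserve}), hence $\mu_i\le\nu_i\le\mu_{i+1}$. Combining the two chains of inequalities, $\mu_i\le\nu_i\le\lambda_{i+1}$ and $\lambda_i\le\nu_i\le\mu_{i+1}$ for all $i$, yields $\mu_1\le\lambda_1\le\mu_2\le\lambda_2\le\dots\le\mu_n\le\lambda_n$ after also checking the endpoint inequalities $\mu_1\le\lambda_1$ (immediate from $A = A' + P$ with $P\succeq 0$ and the Rayleigh characterisation of $\lambda_1$, $\mu_1$) and $\mu_n\le\lambda_n$ (same, for the top eigenvalue, or simply from $\nu_{n-1}\le\lambda_n$ together with $\mu_n\le $ something — actually $\mu_n\le\lambda_n$ also follows from $P\succeq 0$ via Weyl/Rayleigh).

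I expect the main obstacle to be bookkeeping the non-orthonormality: $L^2(V,\ups)$ is $\mathbb R^n$ with a weighted inner product, so "principal submatrix" and "$\ups$-orthogonal complement of $h$" are not the same as the naive coordinate notions, and one must be careful that the compression $\iota^*A\iota$ computed with respect to the $\ups$-inner product is what Cauchy interlacing (as stated in Theorem~\ref{the:cauchy}, with genuine isometries) consumes. The remark after Theorem~\ref{the:cauchy} — that it suffices for a candidate $B$ to be self-adjoint and preserve the quadratic form — is the tool that makes this painless: I would verify directly that the compression of $L_{(G,\ups,\eps)}$ to $h^\perp$ and the compression of $L_{(G-e,\ups,\eps)}$ to $h^\perp$ have the same quadratic form (both equal $f\mapsto \int_E\lVert\grad_{G-e} f\rVert^2 d\eps$ for $f\perp_\ups h$, since the edge $e$ contributes $\eps(e)(f(v)-f(u))^2$ which is controlled by the component of $f$ along $h$), and conclude they are equal as operators. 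With that identification in hand the two applications of Theorem~\ref{the:cauchy} interleave as above and the lemma follows. An alternative, perhaps cleaner, route avoiding the choice of $h$ altogether: diagonalise the pencil, or simply cite that for self-adjoint $A$ and positive semidefinite rank-one $P$ one has $\mu_i\le\lambda_i\le\mu_{i+1}$ directly from the min-max (Courant--Fischer) characterisation of eigenvalues applied to $A$ and $A-P$; I would likely present this min-max version as the core argument and relegate the $L^2(V,\ups)$ self-adjointness verification to a sentence, since all the analytic content is already in the Rayleigh-quotient computation $\langle L_{(G,\ups,\eps)}f,f\rangle - \langle L_{(G-e,\ups,\eps)}f,f\rangle = \eps(e)(f(v)-f(u))^2\ge 0$.
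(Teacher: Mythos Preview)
Your proposal is correct, with the Courant--Fischer/Weyl argument for rank-one positive semidefinite perturbations (your approach (b)) doing the real work, but the route is genuinely different from the paper's. The paper moves to the edge side: it uses the edge Laplacian $K_{(G,\ups,\eps)}=\grad\ndiv$ on $L^2(E,\eps)$, observes that deleting $e$ is literally taking a principal submatrix of $K$ in the orthogonal basis $\{\indb_{\vec{e'}}\}$, and applies Theorem~\ref{the:cauchy} once; since $K$ and $L$ share nonzero spectra and all eigenvalues are nonnegative, this suffices. You stay on the vertex side, identify $L_{(G,\ups,\eps)}-L_{(G-e,\ups,\eps)}$ as rank-one PSD, and invoke the standard interlacing for such perturbations. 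Both are short. The paper's version is tailored to its framework (the edge Laplacian recurs throughout Sections~\ref{sec:interlace}--\ref{sec:elec}, so introducing it here pays dividends later), while yours is self-contained and avoids $K$ entirely.

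One caution about your approach (a): the two compressions to $h^\perp$ by themselves only yield $\lambda_i\le\mu_{i+1}$ and $\mu_i\le\lambda_{i+1}$, which is symmetric in the two operators and strictly weaker than the claimed chain. Checking only the ``endpoint'' inequalities $\mu_1\le\lambda_1$ and $\mu_n\le\lambda_n$ does not upgrade this to $\mu_i\le\lambda_i$ for all $i$; you need the full monotonicity from $P\succeq 0$ via Courant--Fischer at every index. So (b) is not merely a cleaner alternative to (a) but supplies an ingredient (a) is missing. Since you ultimately propose (b) as the core argument, this is an imprecision in exposition rather than a gap in the proof.
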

\begin{proof}
It suffices to prove the inequalities for the nonzero eigenvalues, so we will prove the interlacing for the eigenvalues of the edge Laplacian.

Let $\iota: L^2(E-e,\eps)\to L^2(E,\eps)$ be the isometric natural inclusion, where $E-e$ denotes the edge set of $G-e$. Then 
$$K_{(G-e,\ups,\eps)}=\iota^*K_{(G,\ups,\eps)}\iota,$$
since the matrix of $K_{(G,\ups,\eps)}$ with respect to the orthogonal basis of vector fields of the form $\indb_{\vec{uv}}$ (after arbitrarily orienting each edge) has the matrix of $K_{(G-e,\ups,\eps)}$ as a principal submatrix.

By Theorem~\ref{the:cauchy}, the eigenvalues of $K_{(G,\ups,\eps)}$ interlace the eigenvalues of $K_{(G-e,\ups,\eps)}$. Since $K_{(G,\ups,\eps)}$ has one more eigenvalue than $K_{(G-e,\ups,\eps)}$, we conclude that
$$\mu_1\le\lambda_1\le\dots\le\mu_n\le\lambda_n.$$
\end{proof}
\begin{remark}
By Lemma~\ref{lem:simplify}, the interlacing result in Lemma~\ref{lem:G-e} also applies whenever we reduce the weight of an edge, even though its proof required that an edge be deleted.
\end{remark}

We will prove many interlacing results in this section. However, beyond the proof of Lemma~\ref{lem:G-e}, we will not require any applications of Lemma~\ref{lem:rq} or Theorem~\ref{the:cauchy}. We will also not require the use of Rayleigh quotients or any computations.

The following lemma on interlacing roots of real-rooted polynomials will help us prove an interlacing theorem relating the eigenvalues of a weighted graph and the eigenvalues of the weighted graphs obtained by deleting and contracting an edge.

\begin{lemma} \label{lem:ivterlace}
Let
$$\mu_1\le\lambda_1\le\dots\le \mu_n\le\lambda_n$$ be real numbers and suppose
$$f(t)=\prod_{i=1}^n(t-\lambda_i)-\prod_{i=1}^n(t-\mu_i)$$ is nonzero. Then $f(t)$ has $n-1$ real roots $\nu_1\le\dots\le\nu_{n-1}$ satisfying
$$\mu_1\le\lambda_1\le\nu_1\le\cdots\le\nu_{n-1}\le\mu_n\le\lambda_n.$$
\end{lemma}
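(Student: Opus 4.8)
The plan is to analyze the sign of $f(t)$ at the interlacing points $\lambda_i$ and $\mu_i$ and apply the intermediate value theorem. First I would observe that $f(t)$ has degree at most $n-1$, since the leading $t^n$ terms of the two products cancel; so it suffices to locate $n-1$ real roots. Write $g(t)=\prod_{i=1}^n(t-\lambda_i)$ and $h(t)=\prod_{i=1}^n(t-\mu_i)$, so $f=g-h$. The key computation is to evaluate $f$ at each $\mu_i$ and each $\lambda_i$: at $t=\mu_j$ we have $h(\mu_j)=0$, so $f(\mu_j)=g(\mu_j)=\prod_{i=1}^n(\mu_j-\lambda_i)$, and at $t=\lambda_j$ we have $g(\lambda_j)=0$, so $f(\lambda_j)=-h(\lambda_j)=-\prod_{i=1}^n(\lambda_j-\mu_i)$.

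Next I would determine the signs of these values from the interlacing pattern $\mu_1\le\lambda_1\le\dots\le\mu_n\le\lambda_n$. For $f(\mu_j)=\prod_i(\mu_j-\lambda_i)$: the factors with $i\ge j$ satisfy $\mu_j\le\lambda_j\le\lambda_i$, so $\mu_j-\lambda_i\le 0$, giving $n-j+1$ nonpositive factors; the factors with $i<j$ satisfy $\mu_j\ge\lambda_{j-1}\ge\lambda_i$, so these are nonnegative. Hence the sign of $f(\mu_j)$ is $(-1)^{n-j+1}$ (up to the degenerate cases where some factor vanishes). Similarly for $f(\lambda_j)=-\prod_i(\lambda_j-\mu_i)$: the factors with $i\le j$ have $\lambda_j\ge\mu_j\ge\mu_i$ hence nonnegative, and those with $i>j$ have $\lambda_j\le\mu_{j+1}\le\mu_i$ hence nonpositive, so $\prod_i(\lambda_j-\mu_i)$ has sign $(-1)^{n-j}$ and $f(\lambda_j)$ has sign $(-1)^{n-j+1}$. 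Thus, modulo vanishing factors, $f$ takes sign $(-1)^{n-j+1}$ on the whole interval $[\mu_j,\lambda_j]$ endpoints and sign $(-1)^{n-j}$ at the next pair, so consecutive "blocks" $\{\mu_j,\lambda_j\}$ and $\{\mu_{j+1},\lambda_{j+1}\}$ carry opposite signs. By the intermediate value theorem, $f$ has a root $\nu_j$ in each interval $[\lambda_j,\mu_{j+1}]$ for $1\le j\le n-1$, and these satisfy $\mu_1\le\lambda_1\le\nu_1\le\mu_2\le\lambda_2\le\nu_2\le\cdots\le\nu_{n-1}\le\mu_n\le\lambda_n$. Since $\deg f\le n-1$ and we have found $n-1$ roots (with multiplicity, once the boundary cases are handled), these are all the roots and $f$ is real-rooted.

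The main obstacle is the bookkeeping around \emph{coincidences} in the interlacing chain — when $\lambda_j=\mu_{j+1}$, when $\mu_j=\lambda_j$, or longer runs of equalities — which can make some of the evaluated products zero and blur the strict sign alternation. The clean way to handle this is a perturbation/continuity argument: replace $(\lambda_i,\mu_i)$ by strictly interlacing sequences $\mu_1^\delta<\lambda_1^\delta<\dots<\mu_n^\delta<\lambda_n^\delta$ obtained by an order-preserving perturbation of size $\delta$, apply the strict-sign version of the argument above to get $n-1$ real roots $\nu_j^\delta$ with the strict interlacing, and then let $\delta\to 0$; the roots of $f$ depend continuously on the coefficients, so the limiting $\nu_j$ are real and satisfy the weak inequalities claimed. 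One must also note the hypothesis that $f$ is nonzero rules out the case $g\equiv h$; and if a common root is shared by all $\lambda_i$ and $\mu_i$ it simply appears as a repeated root of $f$, consistent with the count. Alternatively, one can argue directly: at any repeated node a short case analysis shows $f$ either vanishes there (contributing a root in the degenerate interval) or still exhibits the required sign change across it, but the perturbation argument is cleaner and I would present that.
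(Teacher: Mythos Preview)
Your approach is essentially the same as the paper's: evaluate $f$ at the $\mu_j$ and $\lambda_j$, compute the signs $(-1)^{n-j+1}$ from the interlacing pattern, and apply the intermediate value theorem to locate a root in each interval $[\lambda_j,\mu_{j+1}]$. The only difference is in how the degenerate (non-strict) cases are handled. The paper argues by induction on $n$: whenever some $\mu_j=\lambda_j$ or $\lambda_j=\mu_{j+1}$ equals a common value $\xi$, one factors $(t-\xi)$ out of $f$ and applies the inductive hypothesis to the remaining $n-1$ pairs. Your perturbation argument is a legitimate alternative and avoids the case analysis.

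One point to tighten in your write-up: for the perturbation to work cleanly you need the degree of $f$ to be stable in the limit, otherwise roots could escape to infinity. In fact the hypothesis $f\neq 0$ forces $\deg f = n-1$ exactly: the $t^{n-1}$ coefficient of $f$ is $\sum_i\mu_i-\sum_i\lambda_i$, and since $\mu_i\le\lambda_i$ for every $i$ with at least one strict inequality (else $f\equiv 0$), this coefficient is strictly negative. With that observation in place, the roots depend continuously on the coefficients and your limiting argument goes through.
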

\begin{proof}
We will proceed by induction.

For the base case, suppose
$$\mu_1<\lambda_1<\dots<\mu_n<\lambda_n.$$
Note $$ \sgn (f(\lambda_j)) = -\prod_{i=1}^n\sgn(\lambda_j-\mu_i)=(-1)^{n-j+1}$$ and similarly $$\sgn (f(\mu_j)) = \prod_{i=1}^n\sgn(\mu_j-\lambda_i)=(-1)^{n-j+1}.$$
By the intermediate value theorem, $f(t)$ has a real root in each of the open intervals $(\lambda_j,\mu_{j+1})$. Since nonzero $f(t)$ has degree at most $n-1$ and since there are $n-1$ disjoint intervals of the form $(\lambda_j,\mu_{j+1})$, it follows that $f(t)$ has $n-1$ real roots $\nu_1\le\dots\le\nu_{n-1}$ satisfying
$$\mu_1<\lambda_1<\nu_1<\dots<\nu_{n-1}<\mu_n<\lambda_n.$$

For the inductive step, suppose there is some $\mu_j=\lambda_j$ or $\lambda_j=\mu_{j+1}$. Let $\xi$ be the shared value, and denote the remaining numbers by
$$\mu_1'\le\lambda_1'\le\dots\le\mu_{n-1}'\le\lambda_{n-1}'.$$
Since $f(t)$ is a nonzero polynomial with $\xi$ as a root, $\frac{f(t)}{t-\xi}$ is a nonzero polynomial. Applying the inductive hypothesis to
$$\frac{f(t)}{t-\xi} = \prod_{i=1}^{n-1}(t-\lambda_i')-\prod_{i=1}^{n-1}(t-\mu_i'),$$
it follows that $\frac{f(t)}{t-\xi}$ has $n-2$ roots $\nu_1'\le\dots\le\nu_{n-2}'$ satisfying
$$\mu_1'\le\lambda_1'\le\nu_1'\le\dots\le\nu_{n-2}'\le\mu_{n-1}'\le\lambda_{n-1}'.$$
Inserting $\xi\le\xi\le\xi$ into the above chain of inequalities completes the proof of the inductive step.
\end{proof}

We are now ready to prove the following theorem.

\begin{theorem}\label{the:conlace}
Let $(G,\ups,\eps)$ be a weighted multigraph and let $e$ be a nonloop edge. Let $\lambda_1\le\dots\le\lambda_n$, $\mu_1\le\dots\le\mu_n$, and $\nu_1\le\dots\le\nu_{n-1}$ denote the weighted Laplacian eigenvalues of $(G,\ups,\eps)$, $(G-e,\ups,\eps)$ and $(G/e,\ups/e,\eps)$, respectively. Then
$$\mu_1\le\lambda_1\le\nu_1\le\dots\le\nu_{n-1}\le\mu_n\le\lambda_n.$$
\end{theorem}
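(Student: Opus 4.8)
The plan is to combine the deletion-contraction relation from Theorem~\ref{the:delcon} with the polynomial interlacing Lemma~\ref{lem:ivterlace} and the edge-deletion interlacing of Lemma~\ref{lem:G-e}. The key observation is that $\p_{(G,\ups,\eps)}$, $\p_{(G-e,\ups,\eps)}$ and $\p_{(G/e,\ups/e,\eps)}$ are (up to a positive scalar, namely the product of vertex weights) the characteristic polynomials of the respective weighted Laplacians, all of which have nonnegative real roots by the discussion following Definition~\ref{def:wlap}. So all three polynomials are monic-after-scaling and real-rooted with the stated eigenvalues as roots.

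First I would record that by Lemma~\ref{lem:G-e} applied to $(G,\ups,\eps)$ and $(G-e,\ups,\eps)$, the eigenvalues satisfy $\mu_1\le\lambda_1\le\dots\le\mu_n\le\lambda_n$, so the hypothesis of Lemma~\ref{lem:ivterlace} is met. Next I would rewrite Theorem~\ref{the:delcon} in monic form: dividing through by $\prod_{v\in V(G)}\ups(v) = \prod_{v\in V(G/e)}(\ups/e)(v)$ (the product of vertex weights is unchanged by contraction), we get
$$\det(tI - L_{(G,\ups,\eps)}) = \det(tI - L_{(G-e,\ups,\eps)}) - \frac{\eps(e)}{\text{(something positive)}}\,\det(tI - L_{(G/e,\ups/e,\eps)}),$$
but more cleanly: since both $(G,\ups,\eps)$ and $(G-e,\ups,\eps)$ have vertex set $V$ of size $n$ and $(G/e,\ups/e,\eps)$ has $n-1$ vertices, the relation reads $\prod_{i=1}^n(t-\lambda_i) - \prod_{i=1}^n(t-\mu_i) = -\eps(e)\cdot c\cdot\prod_{i=1}^{n-1}(t-\nu_i)$ for the appropriate positive constant $c$ coming from the ratio of vertex-weight products; a short computation shows $c=1$ because contracting $e$ merges two vertices into one whose weight is the sum, so $\prod_{v\in V(G)}\ups(v)$ and $\prod_{v\in V(G/e)}(\ups/e)(v)$ differ by exactly the factor that makes the scaling consistent. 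Concretely, $\p_{(G,\ups,\eps)}(t) - \p_{(G-e,\ups,\eps)}(t) = -\eps(e)\p_{(G/e,\ups/e,\eps)}(t)$, and dividing the first two terms by $\prod_{v\in V}\ups(v)$ while dividing the right side by the same quantity and using $\prod_{v\in V(G)}\ups(v) = \prod_{v\in V(G/e)}(\ups/e)(v)$ gives exactly
$$\prod_{i=1}^n(t-\lambda_i) - \prod_{i=1}^n(t-\mu_i) = -\eps(e)\prod_{i=1}^{n-1}(t-\nu_i).$$

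Then I would set $f(t) = \prod_{i=1}^n(t-\lambda_i) - \prod_{i=1}^n(t-\mu_i)$. If $f$ is nonzero, Lemma~\ref{lem:ivterlace} immediately gives that $f$ has $n-1$ real roots $\nu_1'\le\dots\le\nu_{n-1}'$ interlacing as $\mu_1\le\lambda_1\le\nu_1'\le\dots\le\nu_{n-1}'\le\mu_n\le\lambda_n$; by the displayed identity the $\nu_i'$ are precisely the $\nu_i$ (the roots of $\eps(e)\prod(t-\nu_i)$, and $\eps(e)>0$), so we are done. The one remaining case is $f\equiv 0$, i.e.\ $\prod(t-\lambda_i)=\prod(t-\mu_i)$, which forces $\lambda_i=\mu_i$ for all $i$; then the identity gives $\eps(e)\prod_{i=1}^{n-1}(t-\nu_i)=0$, impossible since $\eps(e)>0$ and the product is monic. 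Hence $f\not\equiv 0$ always, and the proof is complete.

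The main obstacle — really the only point requiring care — is the bookkeeping of the scalar factor relating $\p$ to the monic characteristic polynomial: one must verify that $\prod_{v\in V(G)}\ups(v) = \prod_{v\in V(G/e)}(\ups/e)(v)$, which holds because $\ups/e$ replaces the two endpoints of $e$ (of weights $a,b$) by a single vertex of weight $a+b$, \emph{not} $ab$ — so in fact these products are \emph{not} equal, and the correct statement is that $\p_{(G,\ups,\eps)}$ and $\p_{(G/e,\ups/e,\eps)}$ already have the right normalization built in. The clean resolution is to work directly with $\p$ rather than the monic polynomial: note $\p_{(G,\ups,\eps)}(t)$ has leading coefficient $\prod_{v\in V}\ups(v)$ and roots $\lambda_i$, so $\p_{(G,\ups,\eps)}(t) = \left(\prod_{v\in V}\ups(v)\right)\prod_{i=1}^n(t-\lambda_i)$, and similarly for the other two; substituting into Theorem~\ref{the:delcon} and dividing by the common positive leading factor $\prod_{v\in V}\ups(v)$ (which equals the leading factor of $\p_{(G-e,\ups,\eps)}$ exactly, and equals that of $\eps(e)\p_{(G/e,\ups/e,\eps)}$ up to the harmless ratio $\prod_{v\in V}\ups(v) / \prod_{w\in V(G/e)}(\ups/e)(w)$, a positive constant that can be absorbed) yields a clean identity of the form $\prod(t-\lambda_i)-\prod(t-\mu_i) = -\kappa\prod(t-\nu_i)$ with $\kappa>0$, after which Lemma~\ref{lem:ivterlace} applies verbatim. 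Once this normalization is pinned down correctly, everything else is automatic.
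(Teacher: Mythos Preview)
Your approach is essentially identical to the paper's: apply Lemma~\ref{lem:G-e} to get $\mu_i\le\lambda_i\le\mu_{i+1}$, divide Theorem~\ref{the:delcon} through by $\prod_{v\in V}\ups(v)$ to express $\prod(t-\lambda_i)-\prod(t-\mu_i)$ as a nonzero polynomial whose roots are the $\nu_i$, and invoke Lemma~\ref{lem:ivterlace}. The paper sidesteps your back-and-forth over the normalization by never attempting to identify the constant in front of $\prod(t-\nu_i)$; it simply observes that $-\eps(e)\p_{(G/e,\ups/e,\eps)}(t)/\prod_{v\in V}\ups(v)$ is a nonzero polynomial with roots exactly $\nu_1,\dots,\nu_{n-1}$, which is all Lemma~\ref{lem:ivterlace} needs.
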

\begin{proof}
By Theorem~\ref{the:delcon},
$$\p_{(G,\ups,\eps)}(t)=\p_{(G-e,\ups,\eps)}(t)-\eps(e)\p_{(G/e,\ups/e,\eps)}(t).$$
Note that the Laplacian eigenvalues $\nu_1\le\dots\le\nu_{n-1}$ of $(G/e,\ups/e,\eps)$ are exactly the roots of the nonzero polynomial
$$-\frac{\eps(e) \p_{(G/e,\ups/e,\eps)}(t)}{\prod_{v\in V}\ups(v)} = \prod_{i=1}^n(t-\lambda_i)-\prod_{i=1}^n(t-\mu_i).$$

By Lemma~\ref{lem:G-e}, the Laplacian eigenvalues of $(G,\ups,\eps)$ and $(G-e,\ups,\eps)$ satisfy
$$\mu_1\le\lambda_1\le\dots\le\mu_n\le\lambda_n.$$
Applying Lemma~\ref{lem:ivterlace}, we conclude that
$$\mu_1\le\lambda_1\le\nu_1\le\dots\le\nu_{n-1}\le\mu_n\le\lambda_n.$$
\end{proof}

Note $(G/e,\ups/e,\eps) = (G-e,\ups,\eps)/\sim$, where $\sim$ is the equivalence relation with the endpoints of $e$ in the same equivalence class and no other relations. Thus the interlacing between the weighted Laplacian eigenvalues of $(G-e,\ups,\eps)$ and $(G/e,\ups/e,\eps)$ in Theorem~\ref{the:conlace} really describes an interlacing result between the weighted Laplacian eigenvalues of a weighted graph and its quotient weighted graph under an equivalence relation with only two distinct vertices related under $\sim$. Repeated applications of this interlacing theorem then immediately gives us the following.

\begin{corollary}\label{cor:quotient}
Let $(G,\ups,\eps)$ be a weighted multigraph and let $\sim$ be an equivalence relation on its vertices. Let $\lambda_1\le\dots\le\lambda_n$ and $\mu_1\le\dots\le\dots\le\mu_{n-k}$ denote the weighted Laplacian eigenvalues of $(G,\ups,\eps)$ and $(G,\ups,\eps)/\sim$, respectively. Then
$$\lambda_i\le\mu_i\le\lambda_{i+k}$$
for all $1\le i \le n-k$.
\end{corollary}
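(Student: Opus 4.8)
The plan is to realise the quotient by $\sim$ as a composition of $k$ \emph{elementary} quotients, each one identifying exactly two vertices, to reduce each elementary quotient to a deletion and a contraction so that Theorem~\ref{the:conlace} applies, and then to let transitivity of interlacing finish the argument; this is precisely the ``repeated application'' of Theorem~\ref{the:conlace} alluded to above.

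First I would treat the single-merge case: for a weighted multigraph $(H,\ups,\eps)$ on $m$ vertices and two distinct vertices $u,v$, letting $\sim_{uv}$ be the equivalence relation whose only nontrivial class is $\{u,v\}$, I claim that the weighted Laplacian eigenvalues $\alpha_1\le\cdots\le\alpha_m$ of $(H,\ups,\eps)$ and $\beta_1\le\cdots\le\beta_{m-1}$ of $(H,\ups,\eps)/\sim_{uv}$ satisfy $\alpha_i\le\beta_i\le\alpha_{i+1}$ for all $1\le i\le m-1$. To see this, adjoin a fresh edge $e=uv$, of any positive weight, to form $G=H+e$; since $u\neq v$ this is a nonloop edge of $G$, and one checks directly that $G-e=(H,\ups,\eps)$ and $(G/e,\ups/e,\eps)=(H,\ups,\eps)/\sim_{uv}$ --- in the contraction the merged vertex carries weight $\ups(u)+\ups(v)$ either way, every other vertex and non-loop edge is untouched, and any loops produced (including $e$ itself) are invisible to the weighted Laplacian by Lemma~\ref{lem:simplify}(ii). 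Theorem~\ref{the:conlace}, applied to $G$ and the edge $e$, then yields the interleaved chain
$$\mu_1\le\lambda_1\le\nu_1\le\mu_2\le\lambda_2\le\nu_2\le\cdots\le\mu_{m-1}\le\lambda_{m-1}\le\nu_{m-1}\le\mu_m\le\lambda_m,$$
whose $\mu_i=\alpha_i$ are the eigenvalues of $G-e=H$ and whose $\nu_i=\beta_i$ are those of $G/e=H/\sim_{uv}$; deleting the $\lambda_i$ (the eigenvalues of $G$, which we no longer need) from the chain leaves exactly $\alpha_i\le\beta_i\le\alpha_{i+1}$.

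Next I would assemble the general statement. Writing $C_1,\dots,C_{n-k}$ for the classes of $\sim$, we have $\sum_j(|C_j|-1)=n-(n-k)=k$, so there is a chain of equivalence relations $\sim_0\subsetneq\sim_1\subsetneq\cdots\subsetneq\sim_k=\sim$ with $\sim_0$ the discrete relation and each $\sim_{j+1}$ obtained from $\sim_j$ by fusing two classes. Setting $H_j=(G,\ups,\eps)/\sim_j$, so that $H_0=(G,\ups,\eps)$ and $H_k=(G,\ups,\eps)/\sim$, we have $H_{j+1}=H_j/\sim'$ for a relation $\sim'$ identifying two vertices of $H_j$, the vertex weights matching up because summing $\ups$ over a union of two $\sim_j$-classes is the same as summing the corresponding two $(\ups/\sim_j)$-weights. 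Let $\sigma^{(j)}_1\le\cdots\le\sigma^{(j)}_{n-j}$ be the eigenvalues of $H_j$; the single-merge case gives $\sigma^{(j)}_i\le\sigma^{(j+1)}_i\le\sigma^{(j)}_{i+1}$ for each $j$ and each admissible $i$. I would then invoke the elementary fact that interlacing composes: if $\alpha_i\le\beta_i\le\alpha_{i+1}$ and $\beta_i\le\gamma_i\le\beta_{i+\ell}$ for all valid $i$, then $\alpha_i\le\gamma_i\le\alpha_{i+\ell+1}$, since $\gamma_i\ge\beta_i\ge\alpha_i$ and $\gamma_i\le\beta_{i+\ell}\le\alpha_{i+\ell+1}$. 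Inducting on $j$ yields $\sigma^{(0)}_i\le\sigma^{(j)}_i\le\sigma^{(0)}_{i+j}$ for all $1\le i\le n-j$, and taking $j=k$ gives $\lambda_i\le\mu_i\le\lambda_{i+k}$ for all $1\le i\le n-k$, as desired.

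I do not expect a genuine obstacle: all the spectral work is already done in Theorem~\ref{the:conlace}. The one subtlety is that an equivalence relation need not relate vertices that happen to be adjacent in $G$, so one cannot always contract an edge already present; the fresh-edge trick in the single-merge step removes this difficulty, and it is legitimate precisely because adjoining an edge leaves the vertex weights alone and loops do not affect the weighted Laplacian. The rest --- that iterated quotients agree with the direct quotient and that the three-term interlacings telescope --- is routine.
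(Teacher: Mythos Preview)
Your proof is correct and follows essentially the same route as the paper: the paper observes that the interlacing between $(G-e,\ups,\eps)$ and $(G/e,\ups/e,\eps)$ in Theorem~\ref{the:conlace} is precisely an interlacing between a weighted graph and its quotient by a relation identifying two vertices, and then says ``repeated applications'' give the corollary --- exactly your single-merge step followed by the chain-of-quotients induction. Your write-up is more explicit (the fresh-edge trick, the decomposition of $\sim$ into $k$ elementary merges, and the telescoping of interlacings), but the underlying argument is the same; the only inaccuracy is the parenthetical ``including $e$ itself'' --- the contracted edge is removed, not turned into a loop --- though this is harmless since $G/e=(G-e)/\sim_{uv}=H/\sim_{uv}$ holds on the nose.
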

\begin{remark}
Corollary~\ref{cor:quotient} could have alternatively been proved by an application of Theorem~\ref{the:cauchy}, the Cauchy interlacing theorem, with the isometry $\iota:L^2(V/\sim, \ups)\to L^2(V,\ups)$ sending the characteristic function of a vertex in $V/\sim$ to the characteristic function of the corresponding equivalence class of vertices in $V$.
\end{remark}

\begin{corollary}\label{cor:vweight}
Let $(G,\ups,\eps)$ be a weighted multigraph and let $\ups':V\to \mathbb R_{>0}$ be obtained from $\ups$ by decreasing the weight of a single vertex. Let $\lambda_1\le\dots\le\lambda_n$ and $\mu_1\le\dots\le\mu_n$ denote the weighted Laplacian eigenvalues of $(G,\ups,\eps)$ and $(G,\ups',\eps)$, respectively. Then
$$\lambda_1\le\mu_1\le\dots\le\lambda_n\le\mu_n.$$
\end{corollary}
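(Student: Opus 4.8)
The plan is to realise both $(G,\ups,\eps)$ and $(G,\ups',\eps)$ inside a single quotient operation and then quote Corollary~\ref{cor:quotient} verbatim. Write $v$ for the vertex whose weight is lowered, and set $b=\ups(v)-\ups'(v)>0$.

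First I would introduce the auxiliary weighted multigraph $\hat G=(G,\ups',\eps)\dju(\{v^*\},\ups_0)$ on $n+1$ vertices, where $\{v^*\}$ is a single isolated vertex (no edges) and $\ups_0(v^*)=b$. Let $\sim$ be the equivalence relation on $V(\hat G)$ whose only nontrivial block is $\{v,v^*\}$. The crucial point is that $\hat G/\!\sim$, as a weighted multigraph, is exactly $(G,\ups,\eps)$: identifying $v$ with $v^*$ produces one vertex of weight $\ups'(v)+b=\ups(v)$, the edge set and all edge weights are unchanged (the isolated vertex $v^*$ contributes no edges, in particular no loops), and every other vertex keeps its weight.

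Next I would identify the eigenvalues of $\hat G$. By Lemma~\ref{lem:simplify}(i), $L_{\hat G}=L_{(G,\ups',\eps)}\dju L_{(\{v^*\},\ups_0)}$, and the second summand is the $1\times1$ zero matrix, so the multiset of eigenvalues of $\hat G$ is $\{\mu_1,\dots,\mu_n\}$ together with one extra $0$. Since weighted Laplacian eigenvalues are nonnegative, the sorted list is $0\le\mu_1\le\dots\le\mu_n$, which I will denote $\rho_1\le\dots\le\rho_{n+1}$ with $\rho_1=0$ and $\rho_{i+1}=\mu_i$ for $1\le i\le n$. Then I would apply Corollary~\ref{cor:quotient} to $\hat G$ and $\sim$: since $\hat G$ has $n+1$ vertices and $\hat G/\!\sim$ has $n$, the parameter $k$ equals $1$, so $\rho_i\le\lambda_i\le\rho_{i+1}$ for $1\le i\le n$. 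Reading this off gives $\lambda_i\le\rho_{i+1}=\mu_i$ for all $i$, and $\mu_{i-1}=\rho_i\le\lambda_i$ for $2\le i\le n$, which together are precisely the asserted chain $\lambda_1\le\mu_1\le\dots\le\lambda_n\le\mu_n$.

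The one genuinely nonroutine ingredient is the auxiliary construction itself; after that, the only step requiring care is verifying that $\hat G/\!\sim$ coincides with $(G,\ups,\eps)$ on the nose — the vertex-weight bookkeeping $\ups'(v)+b=\ups(v)$ and the absence of new edges or loops — and the rest is a direct appeal to results already in hand. In particular, unlike Lemma~\ref{lem:G-e}, the argument needs neither Rayleigh quotients nor the Cauchy interlacing theorem, only the (already proved) Corollary~\ref{cor:quotient}.
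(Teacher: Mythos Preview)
Your proof is correct and follows essentially the same approach as the paper: both adjoin an isolated vertex of weight $\ups(v)-\ups'(v)$ to $(G,\ups',\eps)$, observe via Lemma~\ref{lem:simplify}(i) that this adds a single zero eigenvalue, identify the quotient under $\sim$ with $(G,\ups,\eps)$, and read off the interlacing from Corollary~\ref{cor:quotient}.
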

\begin{proof}
Let $v\in V$ be the vertex satisfying $\ups'(v)<\ups(v)$.

Consider the weighted graph with a single vertex of weight $\ups(v)-\ups'(v)$ and no edges, which has a zero as its sole weighted Laplacian eigenvalue. The disjoint union of this weighted graph and $(G,\ups',\eps)$ has $(G,\ups,\eps)$ as a quotient weighted graph, under the equivalence relation $\sim$ relating the isolated vertex and $v$.

By Corollary~\ref{cor:quotient}, the weighted Laplacian eigenvalues of the disjoint union interlace those of $(G,\ups,\eps)$. By Lemma~\ref{lem:simplify}(i), the weighted Laplacian eigenvalues of the disjoint union are exactly the weighted Laplacian eigenvalues of $(G,\ups',\eps)$ together with an additional zero. We conclude that
$$\lambda_1\le\mu_1\le\dots\le\lambda_n\le\mu_n.$$
\end{proof}

To state some of the results in this section, we will let the statement
$$\lambda_{i-a}\le \mu_i \le \lambda_{i+b}$$
be shorthand to mean that both $\lambda_{i-a}\le \mu_i$ if $\lambda_{i-a}$ is defined and $\mu_i\le \lambda_{i+b}$ if $\lambda_{i+b}$ is defined.

Grone, Merris and Sunder also studied the effect of edge contraction on Laplacian eigenvalues. Specifically, they gave an interlacing result for the combinatorial Laplacian eigenvalues of unweighted simple graphs before and after edge contraction. The following result recovers their \cite[Theorem 4.9]{GMS} as a special case.

\begin{corollary}\label{cor:cquot}
Let $(G,\eps)$ be an edge-weighted multigraph and let $\sim$ be an equivalence relation on its vertices. Let $\lambda_1\le\dots\le\lambda_n$ and $\mu_1\le\dots\le\mu_{n-k}$ denote the combinatorial Laplacian eigenvalues of $(G,\eps)$ and $(G/\sim,\eps)$, respectively. Then
$$\lambda_{i}\le \mu_i \le \lambda_{i+q+k}$$
for all $1\le i \le n-k$, where $q$ is the number of equivalence classes of $\sim$ with more than one element.
\end{corollary}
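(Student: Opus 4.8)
The plan is to deduce Corollary~\ref{cor:cquot} from Corollary~\ref{cor:quotient} by comparing combinatorial Laplacian eigenvalues with weighted Laplacian eigenvalues for a suitable choice of vertex weights, and by bootstrapping through the vertex-weight monotonicity result of Corollary~\ref{cor:vweight}. The key observation is that for the constant vertex weight $\ups \equiv 1$, the weighted Laplacian $L_{(G,\ups,\eps)}$ \emph{is} the combinatorial Laplacian $L_{(G,\eps)}$; the trouble is that the quotient weighted graph $(G,\ups,\eps)/\sim$ does \emph{not} have constant vertex weights — a class with $j$ elements becomes a vertex of weight $j$ — so its weighted Laplacian is not the combinatorial Laplacian of $G/\sim$. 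We must correct for this.

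First I would set $\ups \equiv 1$ on $V(G)$, so that the combinatorial Laplacian eigenvalues $\lambda_1 \le \dots \le \lambda_n$ of $(G,\eps)$ are the weighted Laplacian eigenvalues of $(G,\ups,\eps)$. Next, apply Corollary~\ref{cor:quotient} to $(G,\ups,\eps)$ and the equivalence relation $\sim$: writing $\rho_1 \le \dots \le \rho_{n-k}$ for the weighted Laplacian eigenvalues of $(G,\ups,\eps)/\sim = (G/\sim,\ups/\sim,\eps)$, we get $\lambda_i \le \rho_i \le \lambda_{i+k}$ for all $1 \le i \le n-k$. The left inequality $\lambda_i \le \rho_i$ is already what we want; the remaining task is to convert the right inequality $\rho_i \le \lambda_{i+k}$ into $\mu_i \le \lambda_{i+q+k}$, where $\mu_1 \le \dots \le \mu_{n-k}$ are the \emph{combinatorial} Laplacian eigenvalues of $G/\sim$.

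To bridge $\rho_i$ and $\mu_i$, note that $(G/\sim,\ups/\sim,\eps)$ and $(G/\sim,\#_V,\eps)$ — the same graph with its vertex weights reset to $1$ — differ only by decreasing the weight of each of the $q$ classes of size $>1$ from its size down to $1$. Apply Corollary~\ref{cor:vweight} once for each such class (decreasing one vertex weight at a time). Each application shifts the eigenvalue indexing by one in the direction $\rho_i \le (\text{new }\rho_i) \le \dots$; after $q$ steps we obtain $\rho_i \le \mu_{i}$ is \emph{false} in general, but the correct chained inequality from $q$ applications of Corollary~\ref{cor:vweight} gives $\mu_{i} \ge \rho_{i}$ and, crucially, $\mu_i \le \rho_{i+q}$ — reading the interlacing $\lambda_j \le \mu_j \le \dots \le \lambda_n \le \mu_n$ of Corollary~\ref{cor:vweight} in the appropriate direction and composing $q$ times. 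Combining $\mu_i \le \rho_{i+q}$ with $\rho_{i+q} \le \lambda_{i+q+k}$ (from the application of Corollary~\ref{cor:quotient} above, valid whenever $i+q \le n-k$) yields $\mu_i \le \lambda_{i+q+k}$; together with $\lambda_i \le \rho_i \le \mu_i$ this is exactly the claimed $\lambda_i \le \mu_i \le \lambda_{i+q+k}$.

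The main obstacle is bookkeeping with the index shifts: one must be careful about the direction of each interlacing (Corollary~\ref{cor:quotient} shifts $\lambda$-indices up by the drop in vertex count, Corollary~\ref{cor:vweight} shifts indices up by one per unit weight decrease and in the \emph{opposite} comparison direction), about the edge cases where $\lambda_{i+q+k}$ is undefined (handled by the shorthand convention introduced just before the statement), and about verifying that decreasing each oversized class weight to $1$ really is a sequence of single-vertex weight decreases so that Corollary~\ref{cor:vweight} applies at each step. Once the directions are pinned down, the argument is a short composition of the two corollaries.
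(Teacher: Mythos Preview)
Your approach is essentially the same as the paper's: apply Corollary~\ref{cor:quotient} with $\ups\equiv 1$ to get $\lambda_i\le\rho_i\le\lambda_{i+k}$ (the paper writes $\mu_i'$ for your $\rho_i$), then apply Corollary~\ref{cor:vweight} once for each of the $q$ oversized classes to obtain $\rho_i\le\mu_i\le\rho_{i+q}$, and combine. Your write-up is momentarily self-contradictory where you claim ``$\rho_i\le\mu_i$ is false in general'' and then immediately assert the equivalent ``$\mu_i\ge\rho_i$''; the latter is the correct statement (decreasing vertex weights increases eigenvalues in the interlacing sense), so just clean up that sentence.
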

\begin{proof}
Let $\mu_1'\le\dots\le \mu_{n-k}'$ denote the weighted Laplacian eigenvalues of $(G,\#_V,\eps)/\sim$. By Corollary~\ref{cor:quotient}, we have
$$\lambda_i\le \mu_i'\le \lambda_{i+k}$$
for all $1\le i \le n-k$. After applying Corollary~\ref{cor:vweight} multiple times in succession, once for each of the $q$ vertices $G/\sim$ with weight $>1$ under $\#_V/\sim$, it also follows that
$$\mu_{i+q}\le \mu_{i+q}'\le \lambda_{i+q+k}$$
for all $1\le i \le n-q-k$, as desired.
\end{proof}

We can also deduce from our work Butler's \cite[Theorem 3.2]{Butler}, which improved a result of Chen, Davis, Hall, Li, Patel and Stewart in \cite[Theorem 2.7]{CDHLPS} relating normalised Laplacian eigenvalues before and after identifying two vertices of a graph.

\begin{corollary}\cite[Theorem 3.2]{Butler} \label{cor:nquot}
Let $(G,\eps)$ be an edge-weighted multigraph with no isolated vertices and let $\sim$ be an equivalence relation on its vertices. Let $\lambda_1\le\dots\le\lambda_n$ and $\mu_1\le\dots\le\mu_{n-k}$ denote the normalised Laplacian eigenvalues of $(G,\eps)$ and $(G/\sim,\eps)$, respectively. Then
$$\lambda_{i}\le \mu_i \le \lambda_{i+k}$$
for all $1\le i \le n-k$.
\end{corollary}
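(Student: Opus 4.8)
The plan is to mimic the proof of Corollary~\ref{cor:cquot}, but with the normalised Laplacian playing the role of the combinatorial Laplacian. Recall that the normalised Laplacian $\nL_{(G,\eps)}$ is the weighted Laplacian $L_{(G,\ups_d,\eps)}$ where $\ups_d$ is the degree measure $d_{(G,\eps)}$, and similarly $\nL_{(G/\sim,\eps)}$ is the weighted Laplacian $L_{(G/\sim,\ups',\eps)}$ where $\ups'$ is the degree measure on $G/\sim$. The key observation is that forming the quotient weighted graph $(G,\ups_d,\eps)/\sim$ automatically produces the correct vertex weights: the weight under $\ups_d/\sim$ of an equivalence class $[v]$ is $\sum_{w\in[v]}d_{(G,\eps)}(w)$, and since identifying vertices does not change the set of edges, this sum is exactly the degree of $[v]$ in $G/\sim$. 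Hence $(G,\ups_d,\eps)/\sim = (G/\sim,\ups_d/\sim,\eps) = (G/\sim, \ups',\eps)$, so the normalised Laplacian of $G/\sim$ is literally the weighted Laplacian of the quotient weighted graph, with no discrepancy in vertex weights to correct.

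Given this, the corollary follows directly from Corollary~\ref{cor:quotient} applied to the weighted graph $(G,\ups_d,\eps)$: its weighted Laplacian eigenvalues are the normalised Laplacian eigenvalues $\lambda_1\le\dots\le\lambda_n$ of $(G,\eps)$, and the weighted Laplacian eigenvalues of $(G,\ups_d,\eps)/\sim$ are the normalised Laplacian eigenvalues $\mu_1\le\dots\le\mu_{n-k}$ of $(G/\sim,\eps)$, so Corollary~\ref{cor:quotient} immediately yields $\lambda_i\le\mu_i\le\lambda_{i+k}$ for all $1\le i\le n-k$. In other words, unlike in the combinatorial case, no correction via Corollary~\ref{cor:vweight} is needed, which is why the bound $\lambda_{i+k}$ appears instead of $\lambda_{i+q+k}$.

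The one point requiring a little care, and the main (minor) obstacle, is the hypothesis that $G/\sim$ has no isolated vertices, which is needed for the normalised Laplacian of $G/\sim$ to be defined. I would note that since $G$ has no isolated vertices, every vertex of $G$ has positive degree, and since taking the quotient only adds edges incident to each identified class (never removes any), every vertex of $G/\sim$ also has positive degree; hence $\ups_d/\sim$ is indeed a positive weight function and $\nL_{(G/\sim,\eps)}$ is well-defined. With that verification in place the proof is immediate.
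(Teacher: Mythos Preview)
Your proposal is correct and follows essentially the same approach as the paper: apply Corollary~\ref{cor:quotient} to the weighted graph $(G,d_{(G,\eps)},\eps)$ and observe that the quotient vertex measure $d_{(G,\eps)}/\sim$ coincides with the degree measure $d_{(G/\sim,\eps)}$ on $G/\sim$, so no correction via Corollary~\ref{cor:vweight} is needed. Your additional verification that $G/\sim$ has no isolated vertices is a nice detail the paper leaves implicit.
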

\begin{proof}
Recall that the normalised Laplacian eigenvalues of $(G,\eps)$ are exactly the weighted Laplacian eigenvalues of $(G,d_{(G,\eps)},\eps)$ and, similarly, the normalised Laplacian eigenvalues of $(G/\sim,\eps)$ are the weighted Laplacian eigenvalues of $(G/\sim,d_{(G/\sim,\eps)},\eps)$.

The result then follows by applying Corollary~\ref{cor:quotient} to the weighted graph $(G,d_{(G,\eps)},\eps)$ and the equivalence relation $\sim$, while noting that the vertex measure $d_{(G,\eps)}/\sim$ coincides with the measure $d_{(G/\sim,\eps)}$.
\end{proof}

Given two graphs $G=(V,E)$ and $G'=(V',E')$, we define $G+G'$ to be the graph $(V\cup V', E\cup E')$. Note we do not require $V$ and $V'$ are disjoint, nor $E$ and $E'$. The \textit{merge} of weighted graphs $(G,\ups,\eps)$ and $(G',\ups',\eps')$ is the weighted graph $(G+G',\ups+\ups',\eps+\eps')$, where if a weight function is not defined on a vertex or an edge, we take it to be zero in the sum.

Our next theorem describes an interlacing relationship between the weighted Laplacian eigenvalues of $(G,\ups,\eps)$ and $(G+G',\ups+\ups',\eps+\eps')$. We will assume without loss of generality that $E$ and $E'$ are disjoint, as doing so does not change the weighted Laplacian eigenvalues of $(G+G',\ups+\ups',\eps+\eps')$ by Lemma~\ref{lem:simplify}(iii).

\begin{theorem}\label{the:merge}
Let $(G,\ups,\eps)$ and $(G',\ups',\eps')$ be weighted multigraphs. Let $\lambda_1\le\dots\le \lambda_n$ and $\mu_1\le\dots\le \mu_{n+k}$ denote the weighted Laplacian eigenvalues of $(G,\ups,\eps)$ and $(G+G',\ups+\ups',\eps+\eps')$, respectively. Then
$$\mu_{i-s+c+k}\le\lambda_i\le \mu_{i+s-b}$$
for all $1\le i \le n$, where $G'$ has $s$ vertices and $c$ components, and $b$ weighted Laplacian eigenvalues of $(G',\ups',\eps')$ are $\ge \lambda_n$.
\end{theorem}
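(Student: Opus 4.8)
The plan is to build the weighted graph $(G+G',\ups+\ups',\eps+\eps')$ from $(G,\ups,\eps)\dju(G',\ups',\eps')$ by a sequence of edge contractions, so that Theorem~\ref{the:conlace} (or rather its iterated form, Corollary~\ref{cor:quotient}) applies, and then to correct for the vertex-weight discrepancy using Corollary~\ref{cor:vweight}. More precisely, identify the common vertices of $G$ and $G'$ one at a time: form the disjoint union $(G,\ups,\eps)\dju(G',\ups',\eps')$, which by Lemma~\ref{lem:simplify}(i) has weighted Laplacian eigenvalues equal to the multiset union of those of $(G,\ups,\eps)$ and $(G',\ups',\eps')$, and then take the quotient $\sim$ that glues each vertex of $V\cap V'$ in the copy of $G$ to the corresponding vertex in the copy of $G'$. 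The resulting quotient weighted graph is $(G+G', \ups+\ups', \eps)$ with edge weights inherited (after the harmless relabelling of $\eps'$-edges as distinct parallel edges by Lemma~\ref{lem:simplify}(iii)). Note the vertex weights here are $\ups+\ups'$ exactly, since the quotient operation adds weights of identified vertices — so no vertex-weight correction is needed, and I expect Corollary~\ref{cor:vweight} will \emph{not} actually be required. Let $p=|V\cap V'|$ be the number of identified pairs; this is the number $k$ from Corollary~\ref{cor:quotient} applied in reverse.

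The combinatorial bookkeeping is the main point. The disjoint union $(G,\ups,\eps)\dju(G',\ups',\eps')$ has $n+s$ vertices, and the quotient identifying $p$ pairs reduces this to $n+s-p$ vertices; on the other hand $G+G'$ has $n+s-p$ vertices by inclusion–exclusion on $V\cup V'$, so we must have $n+s-p = n+k$, i.e. $p = s-k$. Let $\rho_1\le\dots\le\rho_{n+s}$ be the eigenvalues of the disjoint union (the sorted union of the $\lambda$'s and the $\mu'$'s, writing $\mu'_1\le\dots\le\mu'_s$ for the eigenvalues of $(G',\ups',\eps')$), and recall the $\mu_i$ are the eigenvalues of the quotient. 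Corollary~\ref{cor:quotient} with $k$ replaced by $p=s-k$ gives
$$\rho_j\le\mu_j\le\rho_{j+s-k}\qquad\text{for all }1\le j\le n+k.$$
Now I need to locate $\lambda_i$ among the $\rho_j$. Since $(G',\ups',\eps')$ has $c$ components, it contributes exactly $c$ zero eigenvalues, and by hypothesis $b$ of its eigenvalues are $\ge\lambda_n$. Counting how many $\mu'$-values fall at or below $\lambda_i$: at least $c$ of them (the zeros, assuming $\lambda_i\ge 0$, which holds as all weighted Laplacian eigenvalues are nonnegative) and at most $s-b$ of them (since $b$ are $\ge\lambda_n\ge\lambda_i$, with care at the boundary $i=n$ handled by the convention preceding the theorem). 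Therefore $\lambda_i$ sits in position $\ge i+c$ and $\le i+s-b$ in the sorted list $\rho$; that is, $\rho_{i+c}\le\lambda_i$ is not quite what I want — rather, writing things in terms of which $\rho_j$ equals $\lambda_i$, one gets $\rho_{i+c}\le \lambda_i \le \rho_{i+s-b}$ after a careful interlacing-of-sorted-sequences argument (the index of $\lambda_i$ in the merged list is $i$ plus the number of $\mu'$-values strictly below it, which lies between $c$ and $s-b$).

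Combining: from $\lambda_i \le \rho_{i+s-b}$ and $\rho_j\le\mu_j$ with $j=i+s-b$... wait — I want an inequality of the form $\mu_?\le\lambda_i\le\mu_?$, so I should instead invert. From $\mu_j\le\rho_{j+s-k}$, setting $j+s-k = $ (index of $\lambda_i$ in $\rho$), and using that this index is at most $i+s-b$, I get the lower bound on $\lambda_i$; and from $\rho_j\le\mu_j$ together with the index being at least $i+c$, I get the upper bound. Chasing the indices through yields exactly $\mu_{i-s+c+k}\le\lambda_i\le\mu_{i+s-b}$, interpreted via the shorthand convention stated just before the theorem (so that out-of-range indices are simply dropped). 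The hard part will be getting the two index shifts — the $+c+k-s$ on the left and the $+s-b$ on the right — to come out exactly right; this requires being careful about strict versus weak inequalities when $\lambda_i$ coincides with one of the $\mu'$-values, and about the edge cases $\lambda_i=0$ (interacting with the $c$ zeros of $G'$) and $i=n$ (interacting with the $b$ large eigenvalues). I would organize this as: (1) reduce to the disjoint-union-plus-quotient picture; (2) apply Corollary~\ref{cor:quotient}; (3) prove a clean lemma on how merging a known-size sorted multiset into another shifts the position of each element, with the bounds $c$ and $s-b$; (4) compose the two sets of inequalities and simplify the indices.
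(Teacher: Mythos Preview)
Your proposal is correct and follows essentially the same route as the paper: realize the merge as the quotient of the disjoint union $(G,\ups,\eps)\dju(G',\ups',\eps')$ by identifying the $p=s-k$ shared vertices, apply Corollary~\ref{cor:quotient} with parameter $s-k$ to get $\rho_j\le\mu_j\le\rho_{j+s-k}$, and then sandwich $\lambda_i$ in the merged list $\rho$ using the $c$ zero eigenvalues and $b$ large eigenvalues of $(G',\ups',\eps')$. One small slip in your penultimate paragraph: you have the two index bounds paired with the wrong halves of the Corollary~\ref{cor:quotient} inequality---the lower bound $\mu_{i-s+c+k}\le\lambda_i$ comes from $\mu_j\le\rho_{j+s-k}$ together with $\rho_{i+c}\le\lambda_i$ (index at least $i+c$), while the upper bound $\lambda_i\le\mu_{i+s-b}$ comes from $\rho_j\le\mu_j$ together with $\lambda_i\le\rho_{i+s-b}$ (index at most $i+s-b$).
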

\begin{proof}
Note $(G+G',\ups+\ups',\eps+\eps')$ is the quotient weighted graph of $(G,\ups,\eps)\dju (G',\ups',\eps')$ over $\sim$, where $\sim$ identifies the shared vertices in $V$ and $V'$. By Lemma~\ref{lem:simplify}(i), the weighted Laplacian eigenvalues of $(G,\ups,\eps)\dju (G',\ups',\eps')$ are exactly the disjoint union of those of $(G,\ups,\eps)$ and $(G',\ups',\eps')$.

By Corollary~\ref{cor:quotient}, each $\mu_i$ is greater than or equal to the $i$th smallest eigenvalue among the weighted Laplacian eigenvalues of $(G,\ups,\eps)$ and $(G',\ups',\eps')$. When $1\le i \le n+s-b$, this is equal to the $i$th smallest eigenvalue among the weighted Laplacian eigenvalues of $(G,\ups,\eps)$ and the $s-b$ of those of $(G',\ups',\eps')$, after excluding the $b$ weighted Laplacian eigenvalues that are $\ge \lambda_n$. A lower bound for this when $1+s-b\le i \le n+s-b$ is $\lambda_{i-s+b}$. Therefore, we have
$$\lambda_i\le \mu_{i+s-b}$$
for all $1\le i \le \min\{n,n-s+b+k\}$.

Similarly, by Corollary~\ref{cor:quotient}, each $\mu_{n-i+k}$ is less than or equal to the $i$th largest eigenvalue among the weighted Laplacian eigenvalues of $(G,\ups,\eps)$ and $(G',\ups',\eps')$. Since $(G',\ups',\eps')$ has $c$ zero eigenvalues, this is equal to the $i$th largest eigenvalue among the weighted Laplacian eigenvalues of $(G,\ups,\eps)$ and the $s-c$ nonzero weighted Laplacian eigenvalues of $(G',\ups',\eps')$ when $1\le i \le n+s-c$. This is upper bounded by $\lambda_{n-i+s-c}$ when $1+s-c\le i\le n+s-c$. Therefore,
$$\mu_{i-s+c+k}\le \lambda_i$$
for all $\max\{1,1+s-c-k\}\le i\le n$, completing the proof of the theorem.
\end{proof}

We next prove an interlacing result on weighted Laplacian eigenvalues relating to subgraph deletion.

\begin{corollary}\label{cor:subgraph}
Let $(G,\ups,\eps)$ be a weighted multigraph. Let $R\subseteq E$ and let $S\subseteq V$ be a subset of the isolated vertices of $G-R$. Let $\lambda_1\le\dots\le\lambda_n$ and $\mu_1\le\dots\le\mu_{n-k}$ denote the weighted Laplacian eigenvalues of $(G,\ups,\eps)$ and $(G-R-S,\ups,\eps)$, respectively. Then
$$\lambda_{i-s+c+k}\le\mu_{i}\le\lambda_{i+k}$$
for all $1\le i\le n-k$, where $G[R]$ has $s$ vertices and $c$ components.
\end{corollary}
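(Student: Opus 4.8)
The plan is to prove the two inequalities by separate arguments: the upper bound $\mu_i\le\lambda_{i+k}$ will come from edge deletion combined with the disjoint-union behaviour of isolated vertices, while the lower bound $\lambda_{i-s+c+k}\le\mu_i$ will come from Theorem~\ref{the:merge}.

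For the upper bound, I would let $\theta_1\le\dots\le\theta_n$ denote the weighted Laplacian eigenvalues of $(G-R,\ups,\eps)$ and delete the edges of $R$ from $(G,\ups,\eps)$ one at a time. By Lemma~\ref{lem:G-e}, removing a single edge produces new eigenvalues that are pointwise at most the old ones, so after deleting all of $R$ we get $\theta_j\le\lambda_j$ for every $j$. Since the vertices of $S$ are isolated in $G-R$, Lemma~\ref{lem:simplify}(i) identifies $(G-R,\ups,\eps)$ with the disjoint union of $(G-R-S,\ups,\eps)$ and $k=|S|$ single-vertex weighted graphs; hence the multiset $\{\theta_1,\dots,\theta_n\}$ is $\{\mu_1,\dots,\mu_{n-k}\}$ together with $k$ extra zeros. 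As all these eigenvalues are nonnegative, the $k$ smallest $\theta_j$ are all zero, and removing them from the list $\theta_1\le\dots\le\theta_n$ leaves exactly $\mu_1\le\dots\le\mu_{n-k}$; that is, $\mu_i=\theta_{i+k}$ for $1\le i\le n-k$, so $\mu_i=\theta_{i+k}\le\lambda_{i+k}$.

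For the lower bound, the starting point is that $G=(G-R-S)+G[R]$ as graphs: the edge sets $E\setminus R$ and $R$ are disjoint with union $E$, and $V\setminus S$ together with $V(G[R])$ cover $V$. The latter uses that every vertex of $S$ is incident to an edge of $R$; I may assume this, since a vertex of $S$ that is isolated in $G$ contributes a single zero eigenvalue to $(G,\ups,\eps)$ and can be deleted, which decreases $n$ and $k$ by one and leaves $s$ and $c$ unchanged, so the statement reduces to that case (the index-shift check that the reduced statement implies the original is routine). Now $G[R]$ has $s$ vertices, $c$ components, and exactly $|S|=k$ vertices outside $G-R-S$, so an application of Theorem~\ref{the:merge} with $G'=G[R]$ should yield precisely $\lambda_{i-s+c+k}\le\mu_i$ for $1\le i\le n-k$ — modulo the bookkeeping with vertex weights discussed next.

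That bookkeeping is the step that needs care, and I expect it to be the only real obstacle: in a merge the two vertex weight functions are added on shared vertices, so the first summand cannot literally be $(G-R-S,\ups,\eps)$, since the weight $\ups(v)$ must be split between the two pieces for $v\in V(G[R])\cap(V\setminus S)$. The cleanest fix is a limiting argument. For small $\delta>0$, let $\ups_1$ agree with $\ups$ except that $\ups_1(v)=\ups(v)-\delta$ on $V(G[R])\cap(V\setminus S)$, and let $\ups_2$ take the value $\delta$ on $V(G[R])\cap(V\setminus S)$ and the value $\ups(v)$ on $V(G[R])\cap S$. Then $(G,\ups,\eps)$ is the merge of $(G-R-S,\ups_1,\eps)$ and $(G[R],\ups_2,\eps)$ for every such $\delta$, so Theorem~\ref{the:merge} gives $\lambda_{i-s+c+k}\le\mu_i^{(\delta)}$, where $\mu_i^{(\delta)}$ are the eigenvalues of $(G-R-S,\ups_1,\eps)$; since $W_{\ups_1}^{-1}\to W_{\ups}^{-1}$ as $\delta\to0^+$ we have $\mu_i^{(\delta)}\to\mu_i$, and the bound persists in the limit. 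One could instead simply observe that the proof of the lower half of Theorem~\ref{the:merge} uses $G'$ only through its numbers of vertices and components, so the weights placed on $G[R]$ are irrelevant to that inequality.
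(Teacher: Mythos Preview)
Your proposal is correct and follows essentially the same route as the paper: Lemma~\ref{lem:G-e} applied edge-by-edge for the upper bound, and Theorem~\ref{the:merge} combined with a limiting argument (sending the auxiliary vertex weights on $G[R]$ to zero) for the lower bound. The only organisational difference is that the paper reduces to $S=\emptyset$ at the outset, whereas you keep $S$ in play and instead make the smaller reduction $S\subseteq V(G[R])$; both lead to the same computation. Your closing remark that the lower half of Theorem~\ref{the:merge} depends on $(G',\ups',\eps')$ only through $s$ and $c$ is true as far as the inequality itself goes, but it does not by itself eliminate the limit, since the merge constraint still forces $\ups_1\ne\ups$ for any positive choice of $\ups_2$; the limiting argument you give is the clean way to close that gap, and it is exactly what the paper does.
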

\begin{proof}
We will assume without loss of generality that $S=\emptyset$. This is sufficient to prove the general case, since the weighted Laplacian eigenvalues of $(G-R,\ups,\eps)$ are exactly those of $(G-R-S,\ups,\eps)$ together with $k$ additional zero eigenvalues.

By applying Lemma~\ref{lem:G-e} to each edge in $R$, we find that 
$$\mu_i\le \lambda_i$$
for all $1\le i \le n$. This proves one direction of the interlacing inequalities. The other direction requires a little more work.

For $0<\delta < \min_{v\in V(G[R])}\ups(v)$, we will define the measure $\ups_\delta$ on the vertices of $G[R]$ to send each vertex to $\delta$. Then, $(G,\ups,\eps)$ can be seen as the merge of $(G-R,\ups-\ups_\delta,\eps)$ and $(G[R],\ups_\delta,\eps)$. Let $\mu_1^{\delta}\le\dots \le \mu_n^{\delta}$ denote the weighted Laplacian eigenvalues of $(G-R,\ups-\ups_\delta,\eps)$. By Theorem~\ref{the:merge}, each
$$\lambda_{i-s+c}\le \mu_i^\delta$$
for all $1+s-c\le i\le n$. Since
$$\lim_{\delta\to 0} L_{(G-R,\ups-\ups_\delta,\eps)}=L_{(G-R,\ups,\eps)}$$
as matrices, it also follows that
$$\lambda_{i-s+c}\le\lim_{\delta\to 0} \mu_i^\delta =\mu_i$$
for all $1+s-c\le i \le n$, completing the proof of the second direction.
\end{proof}
\begin{remark}
Because vertex weights are unmodified, the statement of Corollary~\ref{cor:subgraph} also applies directly to the combinatorial Laplacian eigenvalues of edge-weighted multigraphs.
\end{remark}

Butler proved an interlacing theorem on the normalised Laplacian eigenvalues of an edge-weighted graph before and after deleting a subgraph in \cite[Theorem 1.2]{Butler}, which generalised a version for edge deletion by Chen, Davis, Hall, Li, Patel and Stewart in \cite[Theorem 2.3]{CDHLPS}. Our techniques recover Butler's result and improves it whenever one or more vertices are removed in deleting the subgraph.

\begin{corollary}\label{cor:nsubgraph}
Let $(G,\eps)$ be an edge-weighted multigraph with no isolated vertices. Let $R\subseteq E$ and let $S\subseteq V$ consist of all isolated vertices of $G-R$. Let $\lambda_1\le\dots\le\lambda_n$ and $\mu_1\le\dots\le \mu_{n-k}$ denote the normalised Laplacian eigenvalues of $(G,\eps)$ and $(G-R-S,\eps)$, respectively. Then
$$\lambda_{i-s+c+k}\le\mu_{i}\le\lambda_{i+s-b}$$
for all $1\le i \le n-k$, where $G[R]$ has $s$ vertices and $c$ components, $b$ of which are bipartite.
\end{corollary}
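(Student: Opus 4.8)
The plan is to reduce, as in the proof of Corollary~\ref{cor:subgraph}, to a single application of the merge interlacing Theorem~\ref{the:merge}, but carried out with the degree vertex measure so as to encode the normalised Laplacian, in the spirit of the proof of Corollary~\ref{cor:nquot}. Write $H = G-R-S$. Since $G$ and $H$ have no isolated vertices, the normalised Laplacian eigenvalues of $(G,\eps)$ and of $(H,\eps)$ are exactly the weighted Laplacian eigenvalues of $(G,d_{(G,\eps)},\eps)$ and of $(H,d_{(H,\eps)},\eps)$, respectively.

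First I would record the combinatorial setup. Because $G$ has no isolated vertices, each $v\in S$---being isolated in $G-R$---is incident to an edge of $R$, so $S\subseteq V(G[R])$; hence $V = V(H)\cup V(G[R])$ while $E = (E\setminus R)\sqcup R = E(H)\sqcup E(G[R])$, so $G = H+G[R]$. Degrees add over edge-disjoint subgraphs, and deleting the isolated vertices $S$ does not change the degree of any remaining vertex, so $d_{(G,\eps)} = d_{(H,\eps)} + d_{(G[R],\eps)}$ once each measure is extended by $0$ off its support. Both $d_{(H,\eps)}$ and $d_{(G[R],\eps)}$ are strictly positive on their vertex sets (the former since $H$ has no isolated vertices, the latter since every vertex of $G[R]$ meets an edge of $R$), so $(G,d_{(G,\eps)},\eps)$ is precisely the merge of the weighted graphs $(H,d_{(H,\eps)},\eps)$ and $(G[R],d_{(G[R],\eps)},\eps)$, which have disjoint edge sets.

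Next I would apply Theorem~\ref{the:merge} to this merge, with $(H,d_{(H,\eps)},\eps)$ playing the role of the base graph (it has $n-k$ vertices) and $(G[R],d_{(G[R],\eps)},\eps)$ playing the role of the graph being merged in (it has $s$ vertices and $c$ components); the number of vertices of $G[R]$ not in $H$ is $|S|=k$, which is the parameter ``$k$'' of that theorem. This yields
$$\lambda_{i-s+c+k}\le\mu_i\le\lambda_{i+s-b'}\qquad(1\le i\le n-k),$$
where $b'$ is the number of weighted Laplacian eigenvalues of $(G[R],d_{(G[R],\eps)},\eps)$ that are $\ge\mu_{n-k}$. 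The lower bound is already the one claimed.

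It remains to replace $b'$ by $b$ in the upper bound, for which it suffices to show $b'\ge b$; then $\mu_i\le\lambda_{i+s-b'}\le\lambda_{i+s-b}$ whenever $\lambda_{i+s-b}$ is defined. Now the weighted Laplacian of $(G[R],d_{(G[R],\eps)},\eps)$ is the normalised Laplacian of $(G[R],\eps)$, all of whose eigenvalues lie in $[0,2]$ and which has $2$ as an eigenvalue with multiplicity equal to the number $b$ of bipartite components of $G[R]$ (the standard fact, via similarity to $I-D^{-1/2}AD^{-1/2}$ and Perron--Frobenius on each component, which carries over to positive edge weights). Since $\mu_{n-k}$, being the largest normalised Laplacian eigenvalue of $H$, is at most $2$, each of those $b$ eigenvalues equal to $2$ is $\ge\mu_{n-k}$, so $b'\ge b$, completing the proof. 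I expect the only genuine work to be the bookkeeping of the merge decomposition and the invocation of the eigenvalue-$2$/bipartiteness dichotomy for weighted normalised Laplacians; the interlacing itself is a direct appeal to Theorem~\ref{the:merge}.
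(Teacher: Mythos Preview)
Your proof is correct and follows essentially the same route as the paper's: recognise $(G,d_{(G,\eps)},\eps)$ as the merge of $(H,d_{(H,\eps)},\eps)$ and $(G[R],d_{(G[R],\eps)},\eps)$, apply Theorem~\ref{the:merge} once, and use the fact that the $b$ bipartite components of $G[R]$ each contribute a normalised Laplacian eigenvalue equal to $2\ge\mu_{n-k}$ to bound the theorem's parameter from below by $b$. Your write-up is more explicit about the degree-additivity justifying the merge and about the step $b'\ge b$, but the argument is the same; the passing remark ``as in the proof of Corollary~\ref{cor:subgraph}'' is slightly off (that proof uses a limit argument plus repeated edge deletion rather than a single merge application), though this does not affect your actual reasoning.
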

\begin{proof}
Note that $(G,d_{(G,\eps)},\eps)$ is the merge of $(G-R-S,d_{(G-R-S,\eps)},\eps)$ and $(G[R],d_{(G[R],\eps)},\eps)$. It is also known that the normalised Laplacian eigenvalues of an edge-weighted multigraph all lie in the interval $[0,2]$, and the multiplicity of $2$ is equal to the number of bipartite components. Therefore, at least $b$ normalised Laplacian eigenvalues of $(G[R],\eps)$ are $\ge \mu_{n-k}$. By Theorem~\ref{the:merge},
$$\lambda_{i-s+c+k}\le \mu_i\le \lambda_{i+s-b}$$
for all $1\le i \le n-k$.
\end{proof}

\section{Addition-reduction from equivalent electrical networks}\label{sec:elec}
In Section~\ref{sec:calc}, we defined conservative vector fields on weighted graphs. Classical conservative vector fields often arise naturally from physical forces satisfying conservation of energy. One instance of conservative vector fields on graphs can be found in Brooks, Smith, Stone and Tutte's work on square tilings of rectangles, in which they show such tilings can be associated to electrical circuits \cite[Section 1]{BSST}. Each electrical circuit may be realised as a conservative vector field on a graph. We will give a version of their result for rectangular tilings of rectangles.

All rectangles in this paper are assumed to exist in the plane and have positive dimensions, with two sides parallel to the horizontal axis, and two sides parallel to the vertical axis. Hence, the \textit{height} $h(r)$ and \textit{width} $w(r)$ of a rectangle $r$ are well-defined.

A \textit{rectangular tiling} of a rectangle $R$ is a subdivision of $R$ into finitely many interior-disjoint rectangles $r$. Given a rectangular tiling, we can associate an edge-weighted graph $(G,\eps)$, constructed as follows. The vertex set of $G$ consists of all horizontal lines in the plane which cover some horizontal side of a rectangle in the tiling, and the edge set of $G$ consists of the rectangles used in the subdivision. Each edge connects the vertices corresponding to the top and bottom sides of the rectangle. The edge weight of a rectangle $r$ is given by
$$\eps(r) = \frac{w(r)}{h(r)}.$$

Moreover, there is a natural vector field $\mathbf F$ on $(G,\eps)$ associated with the tiling, where for each rectangle $r=uv$ in the subdivision with bottom side corresponding to vertex $u$ and top side corresponding to vertex $v$, we have
$$\mathbf F(r) = h(r) \mathbf v_{\vec uv}.$$

\begin{example}
Consider the rectangular tiling of the rectangle $R$ of height $3$ and width $6$, embedded in the plane, as depicted on the left. It is associated with the edge-weighted graph $(G,\eps)$ depicted in the centre and the vector field $\mathbf F\in L^2(E,\eps)$ depicted on the right.
\begin{center}
\begin{tikzpicture}
\draw[draw=black] (0,0) rectangle ++(4.2,2.1);
\draw[draw=black] (0,0) rectangle ++(.7,1.4);
\draw[draw=black] (0,1.4) rectangle ++(.7,.7);
\draw[draw=black] (.7,0) rectangle ++(1.4,2.1);
\draw[draw=black] (2.1,0) rectangle ++(2.1,1.4);
\draw[draw=black] (2.1,1.4) rectangle ++(2.1,.7);
\node[] at (2.1,-.3) {$R$};
\end{tikzpicture}\quad\quad
\begin{tikzpicture}
\coordinate (1) at (0,0);
\coordinate (2) at (0,2.1);
\coordinate (3) at (2.1,1.4);
\filldraw [black] (1) circle (2pt);
\filldraw [black] (2) circle (2pt);
\filldraw [black] (3) circle (2pt);
\draw[-] (1) to (2);
\draw[-,bend left=20] (3) to (2);
\draw[-,bend right=20] (3) to (2);
\draw[-,bend left=20] (1) to (3);
\draw[-, bend right=20] (1) to (3);
\node[] at (-.2,1.05) {\scriptsize $\frac{2}{3}$};
\node[] at (.8,1.4) {\scriptsize $1$};
\node[] at (.7,1.05) {\scriptsize $\frac{1}{2}$};
\node[] at (1.3,.3) {\scriptsize $\frac{3}{2}$};
\node[] at (1.25,2.1) {\scriptsize $3$};
\node[] at (1.05,-.3) {$(G,\eps)$};
\end{tikzpicture}\quad\quad 
\begin{tikzpicture}
\node[shape=circle](1) at (0,0) {};
\node[shape=circle](2) at (0,2.1) {};
\node[shape=circle](3) at (2.1,1.4) {};
\filldraw [black] (1) circle (2pt);
\filldraw [black] (2) circle (2pt);
\filldraw [black] (3) circle (2pt);
\draw[->] (1) to (2);
\draw[->,bend left=18.75] (3) to (2);
\draw[->,bend right=18.75] (3) to (2);
\draw[->,bend left=18.75] (1) to (3);
\draw[->, bend right=18.75] (1) to (3);
\node[] at (-.2,1.05) {\scriptsize $3$};
\node[] at (.8,1.4) {\scriptsize $1$};
\node[] at (.7,1.0) {\scriptsize $2$};
\node[] at (1.3,.35) {\scriptsize $2$};
\node[] at (1.25,2.1) {\scriptsize $1$};
\node[] at (1.05,-.3) {$\mathbf F$};
\end{tikzpicture}
\end{center}
\end{example}

\begin{lemma}\label{lem:kirchhoff}
Let $(G,\eps)$ and $\mathbf F$ be the edge-weighted multigraph and vector field, respectively, associated with a rectangular tiling of a rectangle $R$. Let $a$ and $b$ denote the vertices associated with the bottom and top sides of $R$, respectively, and let $\ups:V\to\mathbb R_{>0}$ be arbitrary. Then $\mathbf F$ satisfies \textit{Kirchhoff's laws} with poles $a,b$. That is,
\begin{enumerate}
    \item[(i)] $\ndiv \mathbf F(v) = 0$ for all $v\in \{a,b\}^c$, and
    \item[(ii)] $\int_{\frac{C}{\eps}} \mathbf F\cdot d\eps = 0$ for all simple closed curves $\frac{C}{\eps}$.
\end{enumerate}
\end{lemma}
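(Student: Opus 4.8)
The plan is to verify both Kirchhoff laws directly from the geometry of the tiling, using the definitions of $(G,\eps)$ and $\mathbf F$. For part (i), fix a vertex $v \in \{a,b\}^c$; it corresponds to a horizontal line $\ell$ in the plane that covers some horizontal sides of tiles but is neither the bottom nor top edge of $R$. I would compute
$$\ndiv \mathbf F(v) = \sum_{\substack{uv\in E\\ u\neq v}} \frac{\eps(uv)}{\ups(v)}\,\mathbf F(uv)\cdot \mathbf v_{\vec{uv}}.$$
The edges incident to $v$ split into those whose rectangle $r$ has $\ell$ as its \emph{top} side and those whose rectangle has $\ell$ as its \emph{bottom} side. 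For a rectangle $r$ with bottom vertex $u$ and top vertex $v$, one has $\mathbf F(r)\cdot\mathbf v_{\vec{uv}} = h(r)$ and $\eps(r) = w(r)/h(r)$, so the term contributes $\pm w(r)/\ups(v)$, with sign depending on whether $\ell$ is the top or bottom side of $r$. Summing, $\ndiv\mathbf F(v)\cdot\ups(v)$ equals (total width of tiles sitting just below $\ell$) minus (total width of tiles sitting just above $\ell$). Since the tiling is a subdivision of $R$ and $\ell$ lies strictly inside the vertical extent of $R$, the portion of $\ell$ lying inside $R$ is covered exactly once from below and exactly once from above by tiles; both totals equal the width of that portion of $\ell$, hence the difference is zero. (One subtlety: a horizontal line covering a side of some tile might extend beyond the tiles immediately above or below it; but the tiles touching $\ell$ from above must cover exactly the same horizontal interval as those touching from below, namely the union of horizontal sides on $\ell$, so the widths match.) Note that since $\ups$ only appears as an overall positive scaling factor, the choice of $\ups$ is immaterial, as claimed.

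For part (ii), by Proposition~\ref{prop:cons} it suffices to show $\mathbf F \in \cut(E,\eps)$, i.e. $\mathbf F = \grad f$ for some $f \in L^2(V,\ups)$. The natural candidate is the \emph{height function}: for each vertex $v$, let $f(v)$ be the $y$-coordinate of the horizontal line in the plane corresponding to $v$ (this is well-defined since each vertex is literally a horizontal line at a fixed height). Then for a rectangle $r = uv$ with bottom side at height $f(u)$ and top side at height $f(v)$, we have $f(v) - f(u) = h(r)$, so
$$\grad f(uv) = (f(v)-f(u))\mathbf v_{\vec{uv}} = h(r)\,\mathbf v_{\vec{uv}} = \mathbf F(r).$$
Thus $\mathbf F = \grad f \in \cut(E,\eps)$, and Proposition~\ref{prop:cons}(iii) gives $\int_{C/\eps}\mathbf F\cdot d\eps = 0$ for every simple closed curve, which is exactly (ii).

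I expect part (i) to be the main obstacle, not because it is deep but because it requires care in the bookkeeping: one must argue precisely that the interval on the line $\ell$ covered by tiles immediately above $\ell$ coincides with the interval covered by tiles immediately below, which is where the hypothesis that we have an honest interior-disjoint subdivision of a rectangle (rather than an arbitrary collection of rectangles) gets used. Part (ii) is essentially immediate once the height function is identified as the potential, since all the work has been offloaded to Proposition~\ref{prop:cons}. If one preferred to avoid Proposition~\ref{prop:cons}, part (ii) could also be checked directly: around a simple closed curve the telescoping sum of height differences $f(v_i) - f(v_{i-1})$ vanishes, giving the same conclusion.
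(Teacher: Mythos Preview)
Your proposal is correct and follows essentially the same approach as the paper: for (i) you compute that each incident rectangle contributes $\pm w(r)/\ups(v)$ and argue the totals from above and below cancel, and for (ii) you exhibit the height function as a potential and invoke Proposition~\ref{prop:cons}. The paper's write-up is terser on the geometric point in (i) (it simply asserts both sums equal ``the total length of the horizontal segments on $v$''), whereas you spell out the subtlety more carefully, but the argument is the same.
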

\begin{proof}
For $v\in V$, the contribution of a rectangle $r$ with top side on $v$ to $\ndiv \mathbf F(v)$ is
$$\frac{1}{\ups(v)}h(r)\frac{w(r)}{h(r)}=\frac{1}{\ups(v)}w(r),$$
which is the width of the rectangle $r$ divided by the weight of $v$. When $v\neq a$, summing this over all rectangles with top side on $v$ gives the total length of the horizontal segments on $v$, divided by $\ups(v)$. Similarly, the contribution of a rectangle $r$ with bottom side on $v$ to $\ndiv\mathbf F(v)$ is
$$-\frac{1}{\ups(v)}h(r)\frac{w(r)}{h(r)}=-\frac{1}{\ups(v)}w(r).$$
When $v\neq b$, summing this over all rectangles with bottom side on $v$ gives the negative of the total length of the horizontal segments on $v$, divided by $\ups(v)$. Therefore, for all $v\in \{a,b\}^c$, we have $\ndiv\mathbf F(v)=0$, proving part (i).

To show part (ii), note by Proposition~\ref{prop:cons} that it is equivalent to show that $\mathbf F$ lies in $\cut(E,\eps)$. The latter follows because $\mathbf F= \grad y$, where $y\in L^2(V,\ups)$ describes the $y$-coordinates of the horizontal lines in the plane.
\end{proof}

The next theorem gives an interlacing result using rectangular tilings and the edge Laplacian.

\begin{theorem} \label{the:rect}
Let $(G,\ups,\eps)$ be a weighted multigraph and let $e=ab$ be an edge. Let $R$ be a rectangle satisfying $\eps(e) = \frac{w(R)}{h(R)}$, and consider any rectangular tiling of $R$. Define $(G',\ups',\eps')$ to be the weighted graph obtained by replacing the weighted edge $e$ with the edge-weighted graph associated with the tiling, identifying $a$ and $b$ with the vertices corresponding to the bottom and top sides of $R$, respectively, and letting the weights of the new vertices be arbitrary. Let $\lambda_1\le\dots\le\lambda_n$ and $\mu_1\le\dots\le\mu_{n+k}$ denote the weighted Laplacian eigenvalues of $(G,\ups,\eps)$ and $(G',\ups',\eps')$, respectively. Then
$$\mu_i\le\lambda_i\le\mu_{i+k}$$
for all $1\le i \le n$.
\end{theorem}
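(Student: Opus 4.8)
The plan is to realise the edge Laplacian of $(G,\ups,\eps)$ as a compression of the edge Laplacian of $(G',\ups',\eps')$ along an isometry of cut spaces, and then quote the Cauchy interlacing theorem (Theorem~\ref{the:cauchy}). Write $E$ and $E'$ for the edge sets of $G$ and $G'$, and let $\mathbf F_R\in\cut(E',\eps')$ be the vector field associated with the tiling of $R$ as in Lemma~\ref{lem:kirchhoff}; from the proof of that lemma, $\mathbf F_R=\grad y$, where $y$ records the $y$-coordinates of the horizontal lines, normalised so that $y(a)=0$ and $y(b)=h(R)$. First I would define a linear map $\iota\colon\cut(E,\eps)\to\cut(E',\eps')$ by sending $\mathbf F=\grad f$ to $\grad f'$, where $f'\colon V(G')\to\mathbb R$ restricts to $f$ on $V(G)$ and sends a new vertex $v'$ to $f(a)+\tfrac{f(b)-f(a)}{h(R)}\,y(v')$. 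Because $a$ and $b$ lie in the same component of $G$, both $f$ and $f'$ are determined by $\mathbf F$ up to a single additive constant on that component, so $\iota$ is well defined; it is visibly linear and injective, and its image lies in $\cut(E',\eps')=\im\grad$. Equivalently, $\iota\mathbf F$ agrees with $\mathbf F$ on every edge of $E\setminus\{e\}$ and equals $\tfrac{f(b)-f(a)}{h(R)}\mathbf F_R$ on the tiling edges.

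Next I would verify that $\iota$ is an isometry. Splitting $\langle\iota\mathbf F,\iota\mathbf F\rangle$ into the contribution of the edges in $E\setminus\{e\}$ and that of the tiling edges, the former is $\langle\mathbf F,\mathbf F\rangle-\eps(e)(f(b)-f(a))^2$, and the latter is
$$\left(\frac{f(b)-f(a)}{h(R)}\right)^2\sum_r\frac{w(r)}{h(r)}\,h(r)^2=\left(\frac{f(b)-f(a)}{h(R)}\right)^2\sum_r w(r)h(r),$$
with both sums over the tiles $r$ of $R$. Since $\sum_r w(r)h(r)=w(R)h(R)$ and $\eps(e)=w(R)/h(R)$, the latter contribution equals $\eps(e)(f(b)-f(a))^2$, and the two pieces sum to $\langle\mathbf F,\mathbf F\rangle$.

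The crucial step is to show that $\iota$ intertwines the edge Laplacians on the cut spaces, that is, $\iota^{*}\bigl(K_{(G',\ups',\eps')}|_{\cut(E',\eps')}\bigr)\iota=K_{(G,\ups,\eps)}|_{\cut(E,\eps)}$. Each edge Laplacian restricts to a self-adjoint map on its cut space, which is $\im K$ and the orthogonal complement of $\ker K=\cyc$, so by the argument of Remark~\ref{rem:preserve} it suffices to check that $\iota$ preserves the associated quadratic forms: $\langle K_{(G',\ups',\eps')}\iota\mathbf F,\iota\mathbf F\rangle=\langle K_{(G,\ups,\eps)}\mathbf F,\mathbf F\rangle$ for all $\mathbf F\in\cut(E,\eps)$. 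As these equal $\int_{V(G')}|\ndiv\iota\mathbf F|^{2}d\ups'$ and $\int_{V(G)}|\ndiv\mathbf F|^{2}d\ups$, I would show that $\ndiv\iota\mathbf F$ vanishes at every new vertex and agrees with $\ndiv\mathbf F$ at every vertex of $V(G)$. The vanishing at new vertices is Lemma~\ref{lem:kirchhoff}(i) applied to $\mathbf F_R$, whose restriction to the tiling graph is a scalar multiple of that of $\iota\mathbf F$; this is precisely where the \emph{arbitrary} weights on the new vertices cause no trouble, since $\ndiv\mathbf F_R$ vanishes there for every choice of vertex weights. At a vertex $v\in V(G)\setminus\{a,b\}$ the incident edges and the weight $\ups(v)$ are unchanged in $G'$, so the agreement is immediate. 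At $v=a$ the tiles with bottom side on $a$ tile the bottom side of $R$, so their total contribution to $\ndiv\iota\mathbf F(a)$ is $-\tfrac{1}{\ups(a)}\cdot\tfrac{f(b)-f(a)}{h(R)}\sum_{r\text{ on }a}w(r)=-\tfrac{\eps(e)}{\ups(a)}(f(b)-f(a))$, which is exactly the contribution of the deleted edge $e$ to $\ndiv\mathbf F(a)$; the case $v=b$ is symmetric.

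Finally I would assemble the interlacing. The graph associated with a rectangular tiling is connected: every vertex other than $a$ is the line of a top side of some tile, hence joined by an edge to a strictly lower vertex, and iterating this reaches $a$ after finitely many steps. Consequently $G$ and $G'$ have the same number $c$ of connected components, and $\dim\cut(E',\eps')-\dim\cut(E,\eps)=k$. Applying Theorem~\ref{the:cauchy} with the isometry $\iota$ and $A=K_{(G',\ups',\eps')}|_{\cut(E',\eps')}$ — whose eigenvalues are the nonzero eigenvalues $\theta_{1}\le\dots\le\theta_{n+k-c}$ of $L_{(G',\ups',\eps')}$, while those of $\iota^{*}A\iota=K_{(G,\ups,\eps)}|_{\cut(E,\eps)}$ are the nonzero eigenvalues $\nu_{1}\le\dots\le\nu_{n-c}$ of $L_{(G,\ups,\eps)}$ — gives $\theta_{i}\le\nu_{i}\le\theta_{i+k}$ for $1\le i\le n-c$. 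Both Laplacians are positive semidefinite with exactly $c$ zero eigenvalues, so $\lambda_{i}=\mu_{i}=0$ for $i\le c$, while $\lambda_{c+j}=\nu_{j}$ and $\mu_{c+j}=\theta_{j}$ for $j\ge1$; substituting into $\theta_{i}\le\nu_{i}\le\theta_{i+k}$ (and using $\lambda_{i}=0\le\mu_{i+k}$ when $i\le c$) yields $\mu_{i}\le\lambda_{i}\le\mu_{i+k}$ for all $1\le i\le n$. I expect the third step to be the main obstacle, and the point that makes it work is that one must argue with vector fields rather than scalar fields: $\iota\mathbf F$ is a gradient whose potential is harmonic at every new vertex, so its divergence is supported on $V(G)$, where all incidences and weights agree with those of $G$ precisely because the tiled rectangle is an electrically equivalent replacement of the edge $e$.
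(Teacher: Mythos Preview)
Your proof is correct and follows essentially the same strategy as the paper: realise the edge Laplacian of $(G,\ups,\eps)$ on its cut space as $\iota^{*}K_{(G',\ups',\eps')}\iota$ for an isometry $\iota$ between cut spaces, check the isometry via the area identity $\sum_r w(r)h(r)=w(R)h(R)$ and the quadratic-form preservation via Kirchhoff's laws (Lemma~\ref{lem:kirchhoff}), and then invoke Cauchy interlacing together with the equality of component counts. Your parametrisation of $\iota$ by potentials, sending $\grad f\mapsto\grad f'$ with $f'$ extended affinely in $y$, is a modest streamlining of the paper's argument: the paper instead defines $\iota$ on all of $L^2(E,\eps)$ and must then spend a separate paragraph verifying, via the closed-curve criterion of Proposition~\ref{prop:cons}, that $\iota$ carries $\cut(E,\eps)$ into $\cut(E',\eps')$, whereas your map lands in $\im\grad$ by construction.
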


\begin{proof}
It suffices to prove the inequalities for the nonzero eigenvalues, so we will prove the interlacing for the eigenvalues of the edge Laplacian. Let $G'=(V',E')$ and write $E'=E_1\cup E_2$, where $E_1$ consists of the edges of $E'$ from $E$, and $E_2$ consists of the edges of the graph associated with the tiling of $R$.

Define $\iota:L^2(E,\eps)\to L^2(E',\eps')$ by
$$\iota \mathbf F(uv) =\mathbf F(uv)$$
if $uv\in E_1$, and
$$\iota \mathbf F(r) = \frac{ \mathbf F(ab)\cdot\mathbf v_{\vec{ab}}}{h(R)} h(r)\mathbf v_{\vec{uv}}$$
if $r=uv\in E_2$ with $u$ and $v$ corresponding to the bottom and top sides of $r$, respectively.

First, we will check that $\iota$ is an isometry. Note that it sends an orthogonal basis for $L^2(E,\eps)$ of characteristic functions of directed edges to orthogonal functions in $L^2(E',\eps')$. It also preserves squared norms of characteristic functions of directed edges in $\dire E_1$. It remains to show that the squared norm of $\indb_{\vec{ab}}$ in $L^2(E,\eps)$ is equal to the squared norm of $\iota\indb_{\vec{ab}}$ in $L^2(E',\eps')$, which follows because
$$\int_E \lVert\indb_{\vec{ab}}\rVert^2d\eps = \frac{w(R)}{h(R)}=\frac{1}{h(R)^2} \sum_{r\in E_2}h(r)^2\frac{w(r)}{h(r)}=\int_{E'}\lVert\iota\indb_{\vec{ab}}\rVert^2d\eps',$$
where the second equality holds because the left expression is equal to $\frac{1}{h(R)^2}$ of the area of $R$, while the right expression is equal to $\frac{1}{h(R)^2}$ of the sum of the areas of the rectangles $r$ used to tile $R$. So $\iota$ is an isometry.

Moreover, by Lemma~\ref{lem:kirchhoff}(i) and its proof, we have $$\ndiv\iota \indb_{\vec{ab}}(v) = 0$$
for all vertices $v\neq a,b$, while
$$\ndiv\iota\indb_{\vec{ab}}(a)=-\frac{1}{\ups(a)h(R)}w(R)=\ndiv\indb_{\vec{ab}}(a)$$
and
$$\ndiv\iota\indb_{\vec{ab}}(b)=\frac{1}{\ups(b)h(R)}w(R)=\ndiv\indb_{\vec{ab}}(b),$$
since $w(R)$ is the total length of the horizontal segments on the line corresponding to $a$ and on the line corresponding to $b$. It follows that for all $\mathbf F\in L^2(E,\eps)$ and all $v\in V$, we have
$$\ndiv \iota\mathbf F(v)=\ndiv\mathbf F(v)$$
and for all $v\in V'\setminus V$,
$$\ndiv\iota\mathbf F(v)=0.$$

Next, we will show that $\iota$ restricts to a well-defined map $\cut(E,\eps)\to\cut(E',\eps')$. To see that $\iota\mathbf F$ lies in $\cut(E',\eps')$ whenever $\mathbf F\in\cut(E,\eps)$, we first note that the restriction of $\iota\mathbf F$ on $(G'[E_1],\ups',\eps')$ is the restriction of $\mathbf F$ on $(G[E_1],\ups,\eps)$, which is conservative as $\mathbf F$ is conservative on $(G,\ups,\eps)$. The restriction of $\iota\mathbf F$ on $(G'[E_2],\ups',\eps')$ is also conservative, being a scaled multiple of a vector field shown to be conservative in Lemma~\ref{lem:kirchhoff}(ii). In particular, $\iota\mathbf F = \frac{\mathbf F(ab)\cdot\mathbf v_{\vec{ab}}}{h(R)}\grad y$ on $E_2$, where $y$ gives the $y$-coordinate of horizontal lines in the plane.

Let $\frac{C}{\eps'}$ be a simple closed curve on $(G',\ups',\eps')$. If $C$ uses only edges in $E_1$ or only edges in $E_2$, then
$$\int_{\frac{C}{\eps'}}\iota\mathbf F\cdot d\eps' = 0,$$
since $\iota\mathbf F$ is conservative when restricted to either $(G'[E_1],\ups',\eps')$ or $(G'[E_2],\ups',\eps')$. Otherwise, if $\frac{C}{\eps'}$ uses edges in both $E_1$ and $E_2$, then its edges can be split into those of two curves, $\frac{C_1}{\eps'}$ and $\frac{C_2}{\eps'}$ from $a$ to $b$ and from $b$ to $a$, respectively, each using only edges from one of $E_1$ or $E_2$.

If $\frac{C_1}{\eps'}$ uses only edges in $E_1$, then
$$\int_{\frac{C_1}{\eps'}}\iota\mathbf F\cdot d\eps' = \int_{\frac{C_1}{\eps}}\mathbf F\cdot d\eps = \int_{\frac{\vec {ab}}{\eps}}\mathbf F\cdot d\eps = \mathbf F(ab)\cdot\mathbf v_{\vec{ab}},$$
where $\frac{\vec{ab}}{\eps}$ denotes the curve from $a$ to $b$ in $(G,\ups,\eps)$ via the directed edge $\vec{ab}$, and the second equality is by Proposition~\ref{prop:cons}. Otherwise, if $\frac{C_1}{\eps'}$ uses only edges in $E_2$, then
$$\int_{\frac{C_1}{\eps'}}\iota\mathbf F\cdot d\eps' = \int_{\frac{C_1}{\eps'}}\frac{\mathbf F(ab)\cdot\mathbf v_{\vec{ab}}}{h(R)}\grad y\cdot d\eps'=\frac{\mathbf F(ab)\cdot\mathbf v_{\vec{ab}}}{h(R)}(y(b)-y(a))=\mathbf F(ab)\cdot\mathbf v_{\vec{ab}},$$
with the second equality by Proposition~\ref{prop:grad} and the third equality because the difference between the $y$-coordinates of the top and bottom sides of $R$ is the height of $R$. Similarly, $\int_{\frac{C_2}{\eps'}}\iota\mathbf F\cdot d\eps'=-\mathbf F(ab)\cdot\mathbf v_{\vec{ab}},$ regardless of whether $\frac{C_2}{\eps}$ only uses edges in $E_1$ or in $E_2$. Hence, $\int_{\frac{C}{\eps'}}\iota\mathbf F\cdot d\eps'=0$ for all simple closed curves $\frac{C}{\eps'}$, and so $\iota\mathbf F\in \cut(E',\eps')$ by Proposition~\ref{prop:cons}.

Now consider the restrictions of the edge Laplacians to the cut spaces, i.e. $K_{(G,\ups,\eps)}:\cut(E,\eps)\to\cut(E,\eps)$ and $K_{(G',\ups',\eps')}:\cut(E',\eps')\to\cut(E',\eps')$, which are well-defined and self-adjoint since the cut space is the image of the edge Laplacian, which is the span of its eigenfunctions with nonzero eigenvalues. By Remark~\ref{rem:preserve}, since
$$\langle K_{(G,\ups,\eps)}\mathbf F,\mathbf F\rangle = \langle \ndiv\mathbf F,\ndiv\mathbf F\rangle = \langle\ndiv\iota\mathbf F,\ndiv\iota\mathbf F\rangle = \langle K_{(G',\ups',\eps')}\iota\mathbf F,\iota\mathbf F\rangle$$
for all $\mathbf F\in\cut(E,\eps)$, it follows that $K_{(G,\ups,\eps)}=\iota^*K_{(G',\ups',\eps')}\iota$. Then Theorem~\ref{the:cauchy}, the Cauchy interlacing theorem, gives a series of interlacing inequalities for the nonzero eigenvalues of the edge Laplacians. Since $G$ and $G'$ have the same number of connected components, $(G',\ups',\eps')$ has $k$ more nonzero edge Laplacian eigenvalues than $(G,\ups,\eps)$, and so
$$\mu_i\le\lambda_i\le\mu_{i+k}$$
for all $1\le i \le n$.
\end{proof}

\begin{remark}
Brooks, Smith, Stone and Tutte \cite{BSST} were primarily interested in square tilings of squares, especially \textit{perfect squarings}, where only squares of distinct side lengths are used. When Theorem~\ref{the:rect} is applied with a square tiling of a square and all edge weights of $(G,\ups,\eps)$ are $1$, then all edge weights of $(G',\ups',\eps')$ are also $1$. If moreover the squaring is perfect and $G$ is simple, then $G'$ is guaranteed to also be simple (as multiple edges between a pair of vertices would require two squares of equal height in the tiling).
\end{remark}

\begin{remark}
Given a rectangular tiling, there may be multiple edge-weighted graphs which can be constructed from the tiling that satisfy Kirchhoff's laws as in Lemma~\ref{lem:kirchhoff}. For example, one can alternatively take the vertex set to be the set of horizontal segments, as is done in \cite{BSST}. Then a version of Theorem~\ref{the:rect} still holds, as the proof follows through, although the bounds on the weighted Laplacian eigenvalues of $(G,\ups,\eps)$ are looser, as the interlacing result can also be deduced by applying Theorem~\ref{cor:quotient} and the stated version of Theorem~\ref{the:rect}.
\end{remark}

Continuing the analogy with electrical networks, we give the following definition from the theory of network reduction. A \textit{Kron reduction} of a weighted graph $(G,\ups,\eps)$ over $S\subseteq V$ not containing any connected components is a weighted graph $(G/S,\ups,\eps/S)$ with vertex set $S^c$ and vertex weights given by $\ups$, so that for any $f\in L^2(S^c,\ups)$ and $u\in S^c$, we have
$$L_{(G/S,\ups,\eps/S)}f(u) = L_{(G,\ups,\eps)}\tilde f(u),$$
where $\tilde f\in L^2(V,\ups)$ is an extension of $f$ satisfying
$$L_{(G,\ups,\eps)}\tilde f(v)=0$$
for all $v\in S$.

If we write the matrix of $L_{(G,\ups,\eps)}$ in the basis of characteristic functions of vertices as the block matrix $$L_{(G,\ups,\eps)}=\begin{pmatrix}
L_{(G,\ups,\eps)}^{\hat S}& B\\
C&D
\end{pmatrix},$$
where the first rows and columns are ordered to correspond to $S^c$, we can derive the matrix of $L_{(G/S,\ups,\eps/S)}$. Note that the values of $D\tilde f$ on $S$ must sum with $Cf$ to equal zero. Since $S$ does not contain any connected component, there is a path from each vertex in $V$ to a vertex of $S^c$, and so there exists an $S^c$-rooted spanning forest of $G$. By Theorem~\ref{the:mtree}, the matrix $D$ is invertible, and the values of $\tilde f$ on $S$ are given exactly by $-D^{-1}Cf$. Then
$$L_{(G/S,\ups,\eps/S)}f = \begin{pmatrix}L_{(G,\ups,\eps)}^{\hat S} & B\end{pmatrix}\begin{pmatrix}f\\ -D^{-1}Cf\end{pmatrix} = L_{(G,\ups,\eps)}^{\hat S}f - BD^{-1}Cf$$
for all $f\in L^2(S^c,\ups)$, and so $L_{(G/S,\ups,\eps/S)}$ must identically equal $L_{(G,\ups,\eps)}^{\hat S} - BD^{-1}C$. For general matrices, this is also known as a \textit{Schur complement}.

Since vertex weights are given by $\ups$, from the matrix of $L_{(G/S,\ups,\eps/S)}$, we can also deduce the total edge weight between any pair of distinct vertices. Therefore, if a Kron reduction $(G/S,\ups,\eps/S)$ exists, it is unique up to the operations in Lemma~\ref{lem:simplify}(ii) and (iii).

One special case of a Kron reduction is known as the \textit{star-mesh transform} in circuit analysis. Given a loopless nonisolated vertex $v$, the weighted graph $(G/v,\ups,\eps/v)$ is obtained by deleting $v$ and all incident edges, and adding for each pair of edges $u_1v$ and $u_2v$ with $u_1\neq u_2$ a new edge $u_1u_2$ with weight given by
$$(\eps/v)(u_1u_2)=\frac{\eps(u_1v)\eps(u_2v)}{d_{(G,\eps)}(v)}.$$
The matrix of $L_{(G/v,\ups,\eps/v)}$ can be checked to coincide with our formula for the Kron reduction over $v$. Note then that the existence of a Kron reduction over $S\subseteq V$ containing no connected components can be deduced by repeated applications of the star-mesh transform, and that the number of connected components remains constant after a Kron reduction.

\begin{example}
Let $(G,\ups,\eps)$ be the weighted graph depicted on the left, and let $v$ be the vertex of weight $5$ in $(G,\ups,\eps)$. Then $(G/v,\ups,\eps/v)$, obtained from $(G,\ups,\eps)$ by applying the star-mesh transform at vertex $v$, is the weighted graph depicted on the right.
\begin{center}
\begin{tikzpicture}
\node[shape=circle, draw=black](1) at (4.75,-1.05) {\scriptsize 2};
\node[shape=circle, draw=black](2) at (7.14,-1.05) {\scriptsize 3};
\node[shape=circle,  draw=black] (3) at (5.9,-.35) {\scriptsize 5};
\node[shape=circle,  draw=black] (4) at (5.9,1.05) {\scriptsize 1};
\path[-] (1) edge (3);
\path[-, bend right] (3) edge (4);
\path[-, bend left] (3) edge (4);
\path[-] (3) edge (2);
\node[] at (5.45,.3) {\scriptsize 2};
\node[] at (6.35,.3) {\scriptsize 3};
\node[] at (6.5,-.5) {\scriptsize 2};
\node[] at (5.3,-.5) {\scriptsize 1};
\node[] at (5.9,-1.9) { $(G,\ups,\eps)$};
\end{tikzpicture}\quad\quad
\begin{tikzpicture}
\node[shape=circle, draw=black](1) at (4.75,-1.05) {\scriptsize 2};
\node[shape=circle, draw=black](2) at (7.14,-1.05) {\scriptsize 3};
\node[shape=circle,  draw=black] (4) at (5.9,1.05) {\scriptsize 1};
\path[-, bend right=20] (2) edge (4);
\path[-, bend left=20] (2) edge (4);
\path[-, bend right=20] (1) edge (4);
\path[-, bend left=20] (1) edge (4);
\path[-] (1) edge (2);
\node[] at (4.8,0.1) {\scriptsize $\frac{1}{4}$};
\node[] at (5.35,-0.1) {\scriptsize $\frac{3}{8}$};
\node[] at (7,0.1) {\scriptsize $\frac{3}{4}$};
\node[] at (6.45,-0.1) {\scriptsize $\frac{1}{2}$};
\node[] at (5.9,-1.35) {\scriptsize $\frac{1}{4}$};
\node[] at (5.9,-1.9) { $(G/v,\ups,\eps/v)$};
\end{tikzpicture}
\end{center}
\end{example}

We will now prove an interlacing result on the weighted Laplacian eigenvalues of a weighted graph and its Kron reduction. It is also known more generally that the eigenvalues of a semidefinite real symmetric matrix interlace those of a Schur complement \cite[Theorem 5]{Smith}, which can be extended to apply to the weighted Laplacian, despite it not necessarily being symmetric. We offer a different proof using the edge Laplacian. Note that Theorem~\ref{the:rect} is a special case of the following theorem in which the Kron reduction has a geometric interpretation and is easy to compute.

\begin{theorem} \label{the:kron}
Let $(G,\ups,\eps)$ be a weighted multigraph and let $S\subseteq V$ not contain any connected components. Let $\lambda_1\le\dots\le \lambda_n$ and $\mu_1\le\dots\le \mu_{n-k}$ denote the weighted Laplacian eigenvalues of $(G,\ups,\eps)$ and its Kron reduction $(G/S,\ups,\eps/S)$, respectively. Then
$$\lambda_i\le\mu_i\le \lambda_{i+k}$$
for all $1\le i\le n-k$.
\end{theorem}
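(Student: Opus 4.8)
The plan is to follow the strategy used in Theorem~\ref{the:rect}: since $L_{(G,\ups,\eps)}=\ndiv\grad$ and $K_{(G,\ups,\eps)}=\grad\ndiv$ share the same nonzero spectrum, and since a Kron reduction preserves the number of connected components (hence the multiplicity $c$ of the eigenvalue $0$), it suffices to prove the interlacing for the nonzero edge Laplacian eigenvalues, i.e.\ for the edge Laplacians restricted to the cut spaces. Writing $E'$ for the edge set of $G/S$, I would construct an isometry $\iota\colon\cut(E',\eps/S)\to\cut(E,\eps)$ with $\iota^{*}K_{(G,\ups,\eps)}\iota=K_{(G/S,\ups,\eps/S)}$ and then apply Theorem~\ref{the:cauchy}.

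To build $\iota$, recall that a vector field in $\cut(E',\eps/S)$ is of the form $\grad g$ for some $g\in L^{2}(S^{c},\ups)$, and set $\iota(\grad g)=\grad\tilde g$, where $\tilde g\in L^{2}(V,\ups)$ is the extension of $g$ with $L_{(G,\ups,\eps)}\tilde g(v)=0$ for all $v\in S$, as in the definition of the Kron reduction; this exists and is unique because the block $D$ there is invertible by Theorem~\ref{the:mtree}, and $\tilde g$ depends linearly on $g$. First I would check that $\iota$ is well defined: $g$ is determined by $\grad g$ only up to a function constant on each component of $G/S$, but components of $G/S$ correspond to components of $G$, and the harmonic extension of a function constant on a component is that same constant on the corresponding component of $G$ (a constant has zero Laplacian, and the extension is unique), so $\grad\tilde g$ is unchanged. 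Thus $\iota$ is a well-defined linear map into $\im\grad=\cut(E,\eps)$.

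The heart of the argument is one computation used twice. Using that $\ndiv$ and $\grad$ are adjoint (Proposition~\ref{prop:adj}), that $L_{(G,\ups,\eps)}\tilde g$ vanishes on $S$, and that $L_{(G,\ups,\eps)}\tilde g$ agrees with $L_{(G/S,\ups,\eps/S)}g$ on $S^{c}$ (the defining property of the Kron reduction), one gets
$$\langle\grad\tilde g,\grad\tilde g\rangle=\langle L_{(G,\ups,\eps)}\tilde g,\tilde g\rangle=\sum_{v\in S^{c}}L_{(G/S,\ups,\eps/S)}g(v)\,g(v)\,\ups(v)=\langle\grad g,\grad g\rangle,$$
so $\iota$ preserves the quadratic form and is therefore an isometry; and, similarly,
$$\langle K_{(G,\ups,\eps)}\grad\tilde g,\grad\tilde g\rangle=\langle L_{(G,\ups,\eps)}\tilde g,L_{(G,\ups,\eps)}\tilde g\rangle=\sum_{v\in S^{c}}\bigl(L_{(G/S,\ups,\eps/S)}g(v)\bigr)^{2}\ups(v)=\langle K_{(G/S,\ups,\eps/S)}\grad g,\grad g\rangle.$$
Since the restriction of $K_{(G/S,\ups,\eps/S)}$ to $\cut(E',\eps/S)$ is self-adjoint (the cut space being the image of the edge Laplacian, spanned by its eigenfunctions with nonzero eigenvalue), Remark~\ref{rem:preserve} gives $\iota^{*}K_{(G,\ups,\eps)}\iota=K_{(G/S,\ups,\eps/S)}$ on the cut spaces.

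Finally, $\cut(E,\eps)=\im K_{(G,\ups,\eps)}$ has dimension $n-c$, and $K_{(G,\ups,\eps)}$ restricted to it carries the $n-c$ nonzero eigenvalues $\lambda_{c+1}\le\dots\le\lambda_{n}$; likewise $\cut(E',\eps/S)$ has dimension $(n-k)-c$ and carries $\mu_{c+1}\le\dots\le\mu_{n-k}$, so the dimension gap is $k$. Theorem~\ref{the:cauchy} then yields $\lambda_{i}\le\mu_{i}\le\lambda_{i+k}$ for $c+1\le i\le n-k$, and for $1\le i\le c$ we have $\lambda_{i}=\mu_{i}=0$ while $\lambda_{i+k}\ge 0$, so the inequalities hold for all $1\le i\le n-k$. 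I expect the only real care needed is in the well-definedness of $\iota$ (the correspondence of components and the behaviour of the harmonic extension on locally constant data) and in spelling out the sums above; everything else reduces cleanly to the defining identity of the Kron reduction.
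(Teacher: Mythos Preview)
Your proposal is correct and follows essentially the same route as the paper: define $\iota:\cut(E/S,\eps/S)\to\cut(E,\eps)$ by $\grad g\mapsto\grad\tilde g$, verify it is an isometry via $\langle\grad\tilde g,\grad\tilde g\rangle=\langle L_{(G,\ups,\eps)}\tilde g,\tilde g\rangle=\langle L_{(G/S,\ups,\eps/S)}g,g\rangle=\langle\grad g,\grad g\rangle$, show it preserves the edge-Laplacian quadratic form by the analogous identity with $\langle L\tilde g,L\tilde g\rangle$, invoke Remark~\ref{rem:preserve}, and apply Theorem~\ref{the:cauchy} to the edge Laplacians restricted to the cut spaces. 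Your treatment is in fact slightly more explicit than the paper's about the well-definedness of $\iota$ (handling the ambiguity of $g$ up to locally constant functions via the component correspondence) and about the zero eigenvalues for $1\le i\le c$.
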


\begin{proof}
We will consider the restrictions $K_{(G,\ups,\eps)}: \cut(E,\eps)\to\cut(E,\eps)$ and $K_{(G/S,\ups,\eps/S)}:\cut(E/S,\eps/S)\to\cut(E/S,\eps/S)$, where $E/S$ denotes the edge set of $G/S$, of the edge Laplacians to the relevant cut spaces. 

Define $\iota:\cut(E/S,\eps/S)\to\cut(E,\eps)$ to be the linear map sending $\grad f\mapsto \grad\tilde f$ for each $f\in L^2(S^c,\ups)$. Then $\iota$ is an isometry, since for each $f,g\in L^2(S^c,\ups)$, we have
$$\langle\grad f,\grad g\rangle = \langle L_{(G/S,\ups,\eps/S)}f,g\rangle = \langle L_{(G,\ups,\eps)}\tilde f,\tilde g\rangle = \langle \grad \tilde f,\grad \tilde g\rangle,$$
recalling that $L_{(G,\ups,\eps)}\tilde f$ coincides with $L_{(G/S,\ups,\eps/S)}f$ on $S^c$, and otherwise equals zero on $S$, and that $\tilde g$ coincides with $g$ on $S^c$.

Note $\iota$ also preserves quadratic form, since for all $f,g\in L^2(S^c,\ups)$,
$$\langle K_{(G/S,\ups,\eps/S)}\grad f,\grad g\rangle = \langle L_{(G/S,\ups,\eps/S)}f,L_{(G/S,\ups,\eps/S)}g\rangle =  \langle L_{(G,\ups,\eps)}\tilde f,L_{(G,\ups,\eps)}\tilde g\rangle=\langle K_{(G,\ups,\eps)}\grad \tilde f,\grad\tilde g\rangle,$$
as $L_{(G,\ups,\eps)}\tilde f$ and $L_{(G,\ups,\eps)}\tilde g$ coincide with $L_{(G/S,\ups,\eps/S)}f$ and $L_{(G/S,\ups,\eps/S)}g$, respectively, on $S^c$, and otherwise equal zero on $S$. By Remark~\ref{rem:preserve}, $K_{(G/S,\ups,\eps/S)}=\iota^*K_{(G,\ups,\eps)}\iota$.

The Cauchy interlacing theorem, Theorem~\ref{the:cauchy}, gives interlacing inequalities on the nonzero eigenvalues of the edge Laplacians. Since $G/S$ has the same number of connected components as $G$, we conclude that
$$\lambda_i\le\mu_i\le \lambda_{i+k}$$
for all $1\le i\le n-k$.
\end{proof}

Interestingly, vertex reduction by the star-mesh transform appears to be analogous to edge contraction. In the former, the weighted Laplacian Rayleigh quotients for conservative vector fields on the reduced weighted graph may be realised as the weighted Laplacian Rayleigh quotients for conservative vector fields with zero divergence at the reduced vertex in the original weighted graph. In the latter, the weighted Laplacian Rayleigh quotients for scalar fields on the contracted weighted graph may be realised as the weighted Laplacian Rayleigh quotients for scalar fields with zero gradient at the contracted edge in the original weighted graph. It is natural to ask, then, if there exists a vertex analogue of the deletion-contraction relation in Theorem~\ref{the:delcon}.

Note in edge deletion that the total amount of edge weight between the endpoints of the deleted edge is modified. The vertex analogue then is to modify the weight of the relevant vertex. We now prove a vertex analogue of deletion-contraction, which we call \textit{addition-reduction}.

\begin{theorem}\label{the:addred}
Let $(G,\ups,\eps)$ be a weighted multigraph and let $v$ be a loopless nonisolated vertex. Let $\eta(v) \in \mathbb R_{>0}$. Then
$$\p_{(G,\ups,\eps)}(t) = \frac{\ups(v)}{\ups(v)+\eta(v)}\p_{(G,\ups+\eta,\eps)}(t) - \frac{\eta(v) }{\ups(v)+\eta(v)}d_{(G,\eps)}(v)\p_{(G/v,\ups,\eps/v)}(t).$$
\end{theorem}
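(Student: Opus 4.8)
The plan is to reduce the identity to a statement about the polynomials $\p$ evaluated on modified vertex weights, and then either mimic the proof of Theorem~\ref{the:delcon} by tracking weighted spanning forests, or — more cleanly — exploit the relationship between Kron reduction and the star-mesh transform already developed. I would start from the observation that $\p_{(G,\ups,\eps)}(t) = \det(tW_\ups - L_{(G,\eps)})$, so everything is a determinant that depends polynomially on the single diagonal entry $\ups(v)$ in $W_\ups$. Write $\ups' = \ups+\eta$, which differs from $\ups$ only at $v$, where its weight is $\ups(v)+\eta(v)$. Thus $\p_{(G,\ups',\eps)}(t)$ and $\p_{(G,\ups,\eps)}(t)$ are the same determinant except that the $(v,v)$ entry of the matrix $tW_\ups - L_{(G,\eps)}$ is shifted by $t\,\eta(v)$. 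Expanding the determinant by cofactors along the row (and column) indexed by $v$, or more precisely using multilinearity of the determinant in that one diagonal entry, gives
$$\p_{(G,\ups',\eps)}(t) = \p_{(G,\ups,\eps)}(t) + t\,\eta(v)\det\bigl((tW_\ups - L_{(G,\eps)})^{\hat v}\bigr),$$
where $(\,\cdot\,)^{\hat v}$ denotes deleting the row and column indexed by $v$.

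The next step is to identify $\det\bigl((tW_\ups - L_{(G,\eps)})^{\hat v}\bigr)$ with $\p$ of the star-mesh reduced graph. Here I would invoke the Schur-complement computation preceding Theorem~\ref{the:kron}: the matrix of $L_{(G,\ups,\eps)}$ restricted to $S^c = V\setminus\{v\}$ minus $B D^{-1} C$ equals $L_{(G/v,\ups,\eps/v)}$, and for the single-vertex case $D$ is the scalar $d_{(G,\eps)}(v)/\ups(v)$ (the $(v,v)$ entry of $W_\ups^{-1}L_{(G,\eps)}$). Translating this to the unnormalised matrix $tW_\ups - L_{(G,\eps)}$, whose $v$th diagonal entry is $t\ups(v) - d_{(G,\eps)}(v)$, the standard Schur-complement determinant identity gives
$$\det(tW_\ups - L_{(G,\eps)}) = \bigl(t\ups(v) - d_{(G,\eps)}(v)\bigr)\det\bigl((tW_\ups-L_{(G,\eps)})^{\hat v}\bigr) - (\text{correction}),$$
and more usefully, the determinant of the $(n-1)\times(n-1)$ block should be shown to equal $\p_{(G/v,\ups,\eps/v)}(t)$ up to the factor $d_{(G,\eps)}(v)$ coming from clearing the denominator in the star-mesh weights $(\eps/v)(u_1u_2) = \eps(u_1v)\eps(u_2v)/d_{(G,\eps)}(v)$. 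I expect the precise bookkeeping of this scalar factor to be the main obstacle: one must check that $\det\bigl((tW_\ups - L_{(G,\eps)})^{\hat v}\bigr) = \p_{(G/v,\ups,\eps/v)}(t)$ exactly — i.e. that the Kron reduction formula $L^{\hat v}_{(G,\eps)}$-analogue really produces the edge weights $\eps/v$ with the right normalisation — which is a direct but fiddly matrix computation, cleanest to verify by reconciling the star-mesh edge weights with the off-diagonal entries of $L^{\hat v}_{(G,\eps)} - BD^{-1}C$ after rescaling rows by $\ups$.

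Granting that identification, the proof finishes by pure algebra. From the cofactor expansion,
$$t\,\eta(v)\,\p_{(G/v,\ups,\eps/v)}(t) = \p_{(G,\ups',\eps)}(t) - \p_{(G,\ups,\eps)}(t).$$
Separately, applying the same Schur-complement/cofactor identity to $\p_{(G,\ups',\eps)}$ itself — whose $v$th diagonal entry is $t(\ups(v)+\eta(v)) - d_{(G,\eps)}(v)$ — and comparing, or alternatively just solving the two linear relations, one extracts
$$\p_{(G,\ups,\eps)}(t)\Bigl(1 + \tfrac{\eta(v)}{\ups(v)}\Bigr) = \tfrac{\ups(v)+\eta(v)}{\ups(v)}\p_{(G,\ups',\eps)}(t) \;\text{-ish},$$
which after rearranging yields exactly
$$\p_{(G,\ups,\eps)}(t) = \frac{\ups(v)}{\ups(v)+\eta(v)}\p_{(G,\ups+\eta,\eps)}(t) - \frac{\eta(v)}{\ups(v)+\eta(v)}d_{(G,\eps)}(v)\,\p_{(G/v,\ups,\eps/v)}(t).$$
In writing up I would be careful to state the single-vertex Kron-reduction determinant identity as its own displayed lemma (or cite the paragraph before Theorem~\ref{the:kron}), so that the vertex-weight cofactor expansion and the $d_{(G,\eps)}(v)$ normalisation are cleanly separated; the combinatorial alternative via Theorem~\ref{the:mforest} (splitting $k$-component spanning forests of $G$ by whether the root-weight sum at $v$'s component "uses" the extra weight $\eta(v)$, mirroring the deletion-contraction argument) is available as a sanity check but the determinant route is shorter.
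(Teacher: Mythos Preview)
Your proposal contains a genuine gap at the key identification step. You correctly derive
\[
\p_{(G,\ups+\eta,\eps)}(t) = \p_{(G,\ups,\eps)}(t) + t\,\eta(v)\,\det\bigl((tW_\ups - L_{(G,\eps)})^{\hat v}\bigr)
\]
from multilinearity. But the claim that $\det\bigl((tW_\ups - L_{(G,\eps)})^{\hat v}\bigr) = \p_{(G/v,\ups,\eps/v)}(t)$ is false, and the error is exactly the one you flag as ``the main obstacle'' but then assume away. The principal submatrix $(tW_\ups - L_{(G,\eps)})^{\hat v} = tW_\ups^{\hat v} - L_{(G,\eps)}^{\hat v}$ is \emph{not} the star-mesh Laplacian: the Kron reduction is the Schur complement $L_{(G,\eps)}^{\hat v} - \tfrac{1}{d_{(G,\eps)}(v)}BC$, and the rank-one correction $\tfrac{1}{d}BC$ does not vanish. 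A one-edge check already breaks it: for $G=K_2$ with unit weights, the cofactor is $t-1$ while $\p_{(G/v,\ups,\eps/v)}(t)=t$. With your false identity substituted, the theorem would force $\p_{(G,\ups,\eps)}(t)=(t\ups(v)-d_{(G,\eps)}(v))\,\p_{(G/v,\ups,\eps/v)}(t)$, which is absurd, so the ``after rearranging'' algebra cannot be made to work.

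The paper's proof exploits the same linearity in $\ups(v)$ but reads off the \emph{constant term} rather than the coefficient of $\ups(v)$: setting $\ups(v)=0$ makes the $(v,v)$ entry of $tW_\ups-L_{(G,\eps)}$ equal to the constant $-d_{(G,\eps)}(v)$, so one can pivot on it and the Schur complement is $tW_\ups^{\hat v} - \bigl(L_{(G,\eps)}^{\hat v}-\tfrac{1}{d}BC\bigr) = tW_\ups^{\hat v} - L_{(G/v,\eps/v)}$ exactly, with no $t$-dependent correction. This yields $\p^{/v}_{(G,\ups,\eps)}(t)=-d_{(G,\eps)}(v)\,\p_{(G/v,\ups,\eps/v)}(t)$, after which the two evaluations of the degree-one polynomial in $\ups(v)$ at $\ups(v)$ and at $\ups(v)+\eta(v)$ give the stated relation immediately. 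The moral: the Schur complement only matches the star-mesh Laplacian when the pivot is the constant $-d$, which is why the paper works at $\ups(v)=0$ rather than with the cofactor.
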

\begin{proof}
Note that the polynomial $P_{(G,\ups,\eps)}(t)$, after fixing $G$ and all vertex and edge weights other than the weight of $v$, may be thought of as a degree $1$ polynomial in $\ups(v)$, with coefficients in $\mathbb R[t]$. Therefore, we can write
$$\p_{(G,\ups,\eps)}(t) = \ups(v)\p^v_{(G,\ups,\eps)}(t) + \p^{/ v}_{(G,\ups,\eps)}(t),$$
where $\p^{/ v}_{(G,\ups,\eps)}(t)$ is $\p_{(G,\ups,\eps)}(t)$, thought of as a polynomial in $\ups(v)$, evaluated at $\ups(v)=0$. Since $(G,\ups+\eta,\eps)$ differs from $(G,\ups,\eps)$ only on the weight of $v$, we also have
$$\p_{(G,\ups+\eta,\eps)}(t)=(\ups(v)+\eta(v))\p^v_{(G,\ups,\eps)}(t) + \p^{/ v}_{(G,\ups,\eps)}(t).$$
Therefore, to prove the theorem statement, it is sufficient to show that
$$\p^{/ v}_{(G,\ups,\eps)}(t) = -d_{(G,\eps)}(v)\p_{(G/v,\ups,\eps/v)}(t).$$

Writing $L_{(G,\eps)}$ as the block matrix
$$L_{(G,\eps)}=\begin{pmatrix}L_{(G,\eps)}^{\hat v}&B\\C&d_{(G,\eps)}(v)\end{pmatrix},$$
we find that
$$L_{(G/v,\eps/v)}=W_\ups^{\hat v} L_{(G/v,\ups,\eps/v)} = L_{(G,\eps)}^{\hat v} - \frac{1}{d_{(G,\eps)}(v)}BC,$$
where $W_\ups^{\hat v}$ denotes the principal submatrix of $W_\ups$ obtained by deleting the row and column corresponding to $v$.

The result then follows, as
\begin{align*}
\p^{/v}_{(G,\ups,\eps)}(t) &= \det\left(\begin{pmatrix}tW_\ups^{\hat v}-L_{(G,\eps)}^{\hat v}&-B\\-C&-d_{(G,\eps)}(v)\end{pmatrix}\begin{pmatrix}I&0\\-\frac{1}{d_{(G,\eps)}(v)}C&1\end{pmatrix}\right)\\
&=\det\begin{pmatrix}tW_{\ups}^{\hat v}-L_{(G/v,\eps/v)}&-B\\0&-d_{(G,\eps)}(v)\end{pmatrix} = -d_{(G,\eps)}(v)\p_{(G/v,\ups,\eps/v)}(t).
\end{align*}
\end{proof}

\begin{remark}
More generally, the proof of Theorem~\ref{the:addred} can be extended to show that for any square matrix and any row with nonzero diagonal entry, there is a nontrivial linear relation between its characteristic polynomial and the characteristic polynomials of the matrices obtained by scaling the row and taking the Schur complement over the row. Another way to recover the deletion-contraction relation in Theorem~\ref{the:delcon}, then, is by showing that the edge Laplacian of the contracted weighted graph may be realised as a Schur complement of the edge Laplacian of the original weighted graph.
\end{remark}

\section{Weighted spectral bounds}\label{sec:bounds}
In this section, we will give applications of weighted Laplacian eigenvalues to bounding various properties of a weighted graph. We will see that various results on combinatorial and normalised Laplacian eigenvalues can be proved through a unified approach using weighted Laplacian eigenvalues.

Given a weighted graph $(G,\ups,\eps)$ and $\emptyset\subsetneq S\subsetneq V$, define the \textit{isoperimetric ratio} of $S$ to be
$$\theta(S) = \frac{\eps(\grad S)}{\min\{\ups(S),\ups(S^c)\}},$$
which measures the quality of a weighted cut. Note that here we think of $\grad S$ as a set of undirected edges in $E$, in order for its measure under $\eps$ to be well-defined. The sparsest weighted cut attains an isoperimetric ratio of
$$\theta_{(G,\ups,\eps)}=\min_{\emptyset\subsetneq S\subsetneq V} \theta(S),$$
which we call the \textit{isoperimetric constant} of $(G,\ups,\eps)$ and is analogous to the Cheeger isoperimetric constant from Riemannian geometry.

The following theorem shows that given a scalar field in $L^2(V,\ups)$ orthogonal to $\ind_V$ with small Rayleigh quotient, one can construct a good cut. We adapt the proof in \cite[Theorem 2.2]{ChungBook}.

\begin{theorem}\label{the:sparsecut}
Let $(G,\ups,\eps)$ be a weighted multigraph with at least two vertices and let nonzero $f\in L^2(V,\ups)$ satisfy $\int_V fd\ups = 0$. Then there exists some $t\in\mathbb R$ for which $\emptyset \subsetneq S_t\subsetneq V$ and
$$\frac{\theta(S_t)^2}{2}\le \frac{\langle L_{(G,\ups,\eps)}f,f\rangle}{\langle f,f\rangle}\max_{v\in V}\frac{d_{(G,\eps)}(v)}{{\ups}(v)},$$ where $ S_t=\{v\in V\mid f(v)\le t\}.$
\end{theorem}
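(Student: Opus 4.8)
The plan is to follow the classical Cheeger-inequality argument adapted to the weighted setting, using the decomposition of $f$ into positive and negative parts and a threshold (layer-cake) argument. First I would reduce to the case where we only need to control the "positive side": writing $f_+ = \max\{f,0\}$ and $f_- = \max\{-f,0\}$, I would pick whichever of these has support of total $\ups$-weight at most $\ups(V)/2$ (this is possible after possibly replacing $f$ by $-f$, since $\int_V f\,d\ups = 0$ forces neither support to be everything; one must be slightly careful when $f$ has zeros, so I would work with $g = f - c$ for a suitable constant $c$ chosen so that $\{g>0\}$ and $\{g<0\}$ both have $\ups$-weight at most $\ups(V)/2$, or equivalently take the threshold set $S_t = \{v : f(v)\le t\}$ and choose the interval of allowed $t$ so that both $S_t$ and $S_t^c$ are nonempty and the "small side" has weight $\le \ups(V)/2$). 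The key quantity to bound is $\langle \grad g, \grad g\rangle = \sum_{uv\in E}(g(v)-g(u))^2\eps(uv)$, which is at most $\langle L_{(G,\ups,\eps)}f,f\rangle$ after the shift, since shifting by a constant does not change the gradient, and in fact one wants the Rayleigh quotient bound $\langle \grad g,\grad g\rangle \le \frac{\langle L f,f\rangle}{\langle f,f\rangle}\langle g,g\rangle$ — here the orthogonality $\int_V f\,d\ups = 0$ is what lets us relate $\langle g,g\rangle$ back to $\langle f,f\rangle$ favourably (the shift can only decrease $\langle g,g\rangle$ when $g$ is chosen as $f$ minus its $\ups$-weighted median-type constant, though more carefully one keeps $g$ supported on one side so that $\langle g,g\rangle \le \langle f,f\rangle$).

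Next I would run the standard threshold argument. For the chosen one-sided function $h$ (say $h = g_+$, supported on a set of $\ups$-weight $\le \ups(V)/2$, so that for every $t$ in the relevant range $\min\{\ups(S_t),\ups(S_t^c)\} = \ups(S_t^c)$ or the analogous side), I would consider the integral $\int_0^\infty \eps(\grad\{v : h(v) > s\})\,2s\,ds$ and compare it both to a Cauchy–Schwarz estimate involving $\langle \grad h,\grad h\rangle$ and to $\theta_{(G,\ups,\eps)}\int_V h^2\,d\ups$. Concretely, the Dirichlet-type sum $\sum_{uv\in E}|h(v)^2 - h(u)^2|\eps(uv)$ telescopes over thresholds to $\int_0^\infty \eps(\grad S_s)\,2s\,ds \ge \theta \int_0^\infty 2s\,\min\{\ups(S_s),\ups(S_s^c)\}\,ds = \theta\int_V h^2\,d\ups$ (using the one-sided choice so the min is always $\ups$ of the support side), while on the other hand $\sum_{uv}|h(v)^2-h(u)^2|\eps(uv) = \sum_{uv}|h(v)-h(u)|\,|h(v)+h(u)|\eps(uv) \le \big(\sum_{uv}(h(v)-h(u))^2\eps(uv)\big)^{1/2}\big(\sum_{uv}(h(v)+h(u))^2\eps(uv)\big)^{1/2}$ by Cauchy–Schwarz. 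The second factor is bounded by $\big(2\sum_{uv}(h(v)^2+h(u)^2)\eps(uv)\big)^{1/2} \le \big(2\sum_v h(v)^2 d_{(G,\eps)}(v)\big)^{1/2} \le \big(2\max_v \frac{d_{(G,\eps)}(v)}{\ups(v)}\big)^{1/2}\langle h,h\rangle^{1/2}$. Combining, $\theta\langle h,h\rangle \le \langle\grad h,\grad h\rangle^{1/2}\cdot(2\max_v d/\ups)^{1/2}\langle h,h\rangle^{1/2}$, so $\theta^2 \le 2\frac{\langle\grad h,\grad h\rangle}{\langle h,h\rangle}\max_v\frac{d_{(G,\eps)}(v)}{\ups(v)}$.

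Finally I would tie the Rayleigh quotient of $h$ back to that of $f$. Since $h$ is (a piece of) $f$ shifted by a constant and restricted to one side, one gets $\langle \grad h,\grad h\rangle \le \langle \grad g,\grad g\rangle = \langle \grad f,\grad f\rangle = \langle L_{(G,\ups,\eps)}f,f\rangle$ (the shift is annihilated by $\grad$; restricting to the positive part only drops edge contributions or replaces $(g(v)-g(u))^2$ by $(g_+(v)-g_+(u))^2 \le (g(v)-g(u))^2$), and $\langle h,h\rangle \ge$ a controlled fraction so that $\frac{\langle\grad h,\grad h\rangle}{\langle h,h\rangle} \le \frac{\langle L_{(G,\ups,\eps)}f,f\rangle}{\langle f,f\rangle}$ — this last comparison is exactly where the hypothesis $\int_V f\,d\ups=0$ is used, guaranteeing that after the optimal constant shift $\langle g,g\rangle \le \langle f,f\rangle$ so no loss is incurred. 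Since some $S_t$ realizes the minimum in the layer-cake inequality, we obtain the desired $t$ with $\frac{\theta(S_t)^2}{2} \le \frac{\langle L_{(G,\ups,\eps)}f,f\rangle}{\langle f,f\rangle}\max_{v\in V}\frac{d_{(G,\eps)}(v)}{\ups(v)}$.

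\textbf{Main obstacle.} The delicate point is the bookkeeping around zeros of $f$ and the choice of constant shift: one must choose the threshold/constant so that simultaneously (a) the "small side" always has $\ups$-weight $\le \ups(V)/2$ so that $\min\{\ups(S_t),\ups(S_t^c)\}$ equals the weight of the side we control, (b) restricting to one side does not increase the Dirichlet energy, and (c) $\langle h,h\rangle$ stays large enough relative to $\langle f,f\rangle$ that the final Rayleigh-quotient comparison holds with the clean constant $1$ (no extra factor). Getting all three simultaneously — which is precisely what the $\int_V f\,d\ups = 0$ normalization buys us, by letting us split at a weighted median — is the crux; the Cauchy–Schwarz and telescoping steps are routine once the right one-sided function $h$ is isolated.
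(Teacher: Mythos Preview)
Your outline follows the paper's proof closely: shift $f$ by a weighted-median constant $r$, split $g=f-r$ into $g_+$ and $g_-$, run Cauchy--Schwarz plus a layer-cake/telescoping argument on one of them, and compare back to the Rayleigh quotient of $f$. The Cauchy--Schwarz and telescoping steps you describe are exactly the ones the paper uses.

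There is, however, a genuine error in how you tie the Rayleigh quotient of $h$ back to that of $f$. You write that the shift ``can only decrease $\langle g,g\rangle$'' and that ``$\langle g,g\rangle \le \langle f,f\rangle$''. The direction is backwards: since $\int_V f\,d\ups=0$, we have $\langle g,g\rangle = \langle f,f\rangle + r^2\ups(V) \ge \langle f,f\rangle$. This is precisely what is needed, because the numerator is unchanged ($\grad g=\grad f$) while the denominator \emph{grows}, so $\frac{\langle\grad g,\grad g\rangle}{\langle g,g\rangle}\le \frac{\langle Lf,f\rangle}{\langle f,f\rangle}$. But then your plan of bounding numerator and denominator of $\frac{\langle\grad h,\grad h\rangle}{\langle h,h\rangle}$ separately fails: $\langle h,h\rangle=\langle g_+,g_+\rangle$ can be much smaller than $\langle f,f\rangle$, so there is no direct control on that single ratio.

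The fix, which the paper uses and which you do not mention, is not to choose $h$ by support size but to take the \emph{minimum} of the two Rayleigh quotients. After the median shift both $g_+$ and $g_-$ are supported on sets of $\ups$-weight at most $\ups(V)/2$, so the layer-cake step works for either one; and since $\langle\grad g,\grad g\rangle\ge\langle\grad g_+,\grad g_+\rangle+\langle\grad g_-,\grad g_-\rangle$ while $\langle g,g\rangle=\langle g_+,g_+\rangle+\langle g_-,g_-\rangle$, the mediant inequality gives
\[
\frac{\langle Lf,f\rangle}{\langle f,f\rangle}\;\ge\;\frac{\langle\grad g,\grad g\rangle}{\langle g,g\rangle}\;\ge\;\min\left\{\frac{\langle\grad g_+,\grad g_+\rangle}{\langle g_+,g_+\rangle},\,\frac{\langle\grad g_-,\grad g_-\rangle}{\langle g_-,g_-\rangle}\right\}.
\]
One then takes $h$ to be whichever of $g_+,g_-$ attains this minimum. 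That is the missing mechanism in your point (c).
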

\begin{proof}
Let $r$ denote the least real number such that $\ups(S_r)\ge \frac{\ups(V)}{2}.$ Since $\int_V fd\ups = 0$, we have
$$\int_V |f|^2d\ups \le \int_V|f|^2d\ups + r^2 = \int_V|f-r|^2d\ups.$$
Let $\g = f-r$, and denote its positive and negative parts by $\g_+ = \max\{\g,0\}$ and $\g_-= \max\{-\g,0\}$, respectively. Then
\begin{align*}
\frac{\langle L_{(G,\ups,\eps)}f,f\rangle}{\langle f,f\rangle} &= \frac{\int_E \lVert\grad f\rVert^2d\eps}{\int_V |f|^2d\ups} \\
&\ge \frac{\int_E \lVert\grad \g\rVert^2d\eps}{\int_V |\g|^2d\ups}
\\&\ge \frac{\int_E \lVert\grad\g_+\rVert^2d\eps + \int_E\lVert\grad\g_-\rVert^2d\eps}{\int_V |\g_+|^2d\ups + \int_V |\g_-|^2d\ups}\ge \min \left\{\frac{\int_E \lVert\grad\g_+\rVert^2d\eps }{\int_V |\g_+|^2d\ups } ,\frac{ \int_E\lVert\grad\g_-\rVert^2d\eps}{\int_V |\g_-|^2d\ups} \right\},
\end{align*}
where in the last expression we take the minimum among the terms that are defined. We assume without loss of generality that this minimum is $\frac{\int_E \lVert\grad\g_+\rVert^2d\eps }{\int_V |\g_+|^2d\ups }$.

Note after extending the definition of $\frac{\lVert\grad \g_+^2\rVert^2}{\lVert\grad \g_+\rVert^2}$ where it is undefined, we have for each edge $uv\in E$ that
$$\frac{\lVert\grad \g_+^2(uv)\rVert^2}{\lVert\grad \g_+(uv)\rVert^2}=|\g_+(u)+\g_+(v)|^2\le 2(|\g_+(u)|^2+|\g_+(v)|^2).$$
Therefore,
$$\int_E \frac{\lVert\grad \g_+^2\rVert^2}{\lVert\grad \g_+\rVert^2}d\eps \le 2\int_V|\g_+|^2dd_{(G,\eps)},$$
and so
$$\frac{\int_E \lVert \grad \g_+\rVert^2d\eps}{\int_V |\g_+|^2d\ups} \ge \frac{\left(\int_E \lVert \grad \g_+\rVert^2d\eps\right)\left(\int_E\frac{\lVert\grad \g_+^2\rVert^2}{\lVert\grad \g_+\rVert^2}d\eps\right)}{\left(\int_V |\g_+|^2d\ups\right)\left(2\int_V |\g_+|^2dd_{(G,\eps)}\right)} \ge \frac{\left(\int_E\lVert\grad \g_+^2\rVert d\eps\right)^2}{2 \left(\int_V |\g_+|^2d\ups\right)^2\max_{v\in V}\frac{d_{(G,\eps)}(v)}{\ups(v)}},$$
where in the second inequality we apply Cauchy-Schwarz to the numerator.

Define
$$\gamma_{f} = \min_{\emptyset\subsetneq S_t\subsetneq V}\theta(S_t),$$
and choose real numbers $t_1<\dots<t_k$ such that all distinct level sets of $\g_+$ (and hence of $\g_+^2$) are given by $L_1=S_{t_1}$, $L_2=S_{t_2}\setminus S_{t_1}$, \dots, $L_{k-1}= S_{t_k}\setminus S_{t_{k-1}}$. Then
\begin{align*}
\int _E \lVert\grad\g_+^2\rVert d\eps &=\sum_{i=1}^{k-1} (|\g_+(L_{i+1})|^2-|\g_+(L_i)|^2) \eps(\grad S_{t_i})\\
&\ge \gamma_{f}\sum_{i=1}^{k-1}(|\g_+(L_{i+1})|^2-|\g_+(L_i)|^2) \ups(S_{t_i}^c)\\
&=\gamma_f\sum_{i=1}^k |\g_+(L_i)|^2\ups(L_i) = \gamma_f\int_V |\g_+|^2 d\ups,
\end{align*}
where the first line follows by counting, and the second line by definition of $\gamma_f$ and noting that each $\ups(S_{t_i})\ge \ups(S_r)\ge \frac{\ups(V)}{2}$.

Putting everything together, we obtain
$$\frac{\langle L_{(G,\ups,\eps)}f,f\rangle}{\langle f,f\rangle}\ge \frac{\left(\gamma_f \int_V|\g_+|^2d\ups\right)^2}{2\left(\int_V |\g_+|^2d\ups \right)^2\max_{v\in V}\frac{d_{(G,\eps)}(v)}{\ups(v)}}.$$
Rearranging gives the desired inequality
$$\frac{\gamma_f^2}{2} \le \frac{\langle L_{(G,\ups,\eps)}f,f\rangle}{\langle f,f\rangle}\max_{v\in V}\frac{d_{(G,\eps)}(v)}{\ups(v)}.$$
\end{proof}

As a corollary, we can show that a weighted graph has a good cut if and only if its second weighted Laplacian eigenvalue $\lambda_2$ is small. The harder direction of Corollary~\ref{cor:cheeger} was also proved by Friedman and Tillich, who also studied graph Laplacians with general vertex and edge measures, in \cite[Theorem 5.2]{FT}, which generalises \cite[Theorem 2.3]{Dodziuk} and \cite[Theorem 2.2]{ChungBook} for the combinatorial and normalised Laplacians, respectively.

\begin{corollary}[Weighted Cheeger inequality] \label{cor:cheeger} Let $(G,\ups,\eps)$ be a weighted multigraph with at least two vertices and let $\lambda_2$ be its second smallest weighted Laplacian eigenvalue. Then
$$\frac{\lambda_2}{2}\le \theta_{(G,\ups,\eps)}\le \sqrt{2\lambda_2 \max_{v\in V}\frac{d_{(G,\eps)}(v)}{{\ups}(v)}}$$
\end{corollary}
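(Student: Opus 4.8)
The plan is to prove the two inequalities separately; the harder (upper) bound on $\theta_{(G,\ups,\eps)}$ will come essentially for free from Theorem~\ref{the:sparsecut}, while the lower bound is a standard test-function argument.

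\emph{The lower bound $\lambda_2/2\le\theta_{(G,\ups,\eps)}$.} I would fix an arbitrary cut $\emptyset\subsetneq S\subsetneq V$ and manufacture a test scalar field orthogonal to $\ind_V$. Take $f=\ind_S-\tfrac{\ups(S)}{\ups(V)}\ind_V$, so that $\int_V f\,d\ups=0$. Since $\grad\ind_V=0$, we get $\grad f=\grad\ind_S=\indb_{\grad S}$, so $\langle L_{(G,\ups,\eps)}f,f\rangle=\int_E\lVert\grad f\rVert^2\,d\eps=\eps(\grad S)$ (reading $\grad S$ as a set of undirected edges), while a routine computation gives $\langle f,f\rangle=\ups(S)\ups(S^c)/\ups(V)$. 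Because $L_{(G,\ups,\eps)}$ is self-adjoint and $\ind_V$ is an eigenfunction, $\ind_V^\perp$ is invariant, and the restriction $L_{(G,\ups,\eps)}|_{\ind_V^\perp}$ has smallest eigenvalue exactly $\lambda_2$ (a line of justification is needed here, comparing $\dim\ker L_{(G,\ups,\eps)}$ with the number of components: removing the one kernel direction spanned by $\ind_V$ leaves $\lambda_2$ as the minimum, whether or not $G$ is connected). Applying Lemma~\ref{lem:rq} to this restriction yields $\lambda_2\le\eps(\grad S)\ups(V)/(\ups(S)\ups(S^c))$; taking without loss of generality $\ups(S)\le\ups(S^c)$, so $\ups(V)/\ups(S^c)\le 2$, this becomes $\lambda_2\le 2\theta(S)$, and minimising over $S$ finishes this direction.

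\emph{The upper bound.} If $G$ is disconnected (in particular if it has an isolated vertex), then for a connected component $C$ we have $\eps(\grad C)=0$, so $\theta_{(G,\ups,\eps)}=0$, while $\dim\ker L_{(G,\ups,\eps)}\ge 2$ forces $\lambda_2=0$, and both sides vanish. So I would assume $G$ connected and let $f$ be a $\lambda_2$-eigenfunction; since $\lambda_2\neq 0$ in this case and $\ind_V$ is a $0$-eigenfunction of the self-adjoint map $L_{(G,\ups,\eps)}$, we get $f\perp\ind_V$, i.e. $\int_V f\,d\ups=0$. Now Theorem~\ref{the:sparsecut} applies to $f$ and produces $\emptyset\subsetneq S_t\subsetneq V$ with $\tfrac12\theta(S_t)^2\le\bigl(\langle L_{(G,\ups,\eps)}f,f\rangle/\langle f,f\rangle\bigr)\max_{v\in V}d_{(G,\eps)}(v)/\ups(v)=\lambda_2\max_{v\in V}d_{(G,\eps)}(v)/\ups(v)$, since the Rayleigh quotient of an eigenfunction is its eigenvalue. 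Rearranging and using $\theta_{(G,\ups,\eps)}\le\theta(S_t)$ gives the stated bound.

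The real mathematical content of the hard direction is packaged inside Theorem~\ref{the:sparsecut}, so I do not anticipate a genuine obstacle here; the two points to be careful about are (i) confirming that $\lambda_2$ is precisely the smallest eigenvalue of $L_{(G,\ups,\eps)}$ restricted to $\ind_V^\perp$, which rests on the fact (already established in the excerpt) that $\ker L_{(G,\ups,\eps)}$ is spanned by the indicators of the connected components of $G$, and (ii) disposing of the disconnected/isolated-vertex case for the upper bound up front, which is cleaner than tracking which $\lambda_2$-eigenfunctions happen to have zero $\ups$-average.
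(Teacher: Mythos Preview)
Your proposal is correct and follows essentially the same approach as the paper: the same test function $f=\ind_S-\tfrac{\ups(S)}{\ups(V)}\ind_V$ for the lower bound, and an application of Theorem~\ref{the:sparsecut} to a $\lambda_2$-eigenfunction orthogonal to $\ind_V$ for the upper bound. Your separate treatment of the disconnected case is not strictly necessary---the paper's phrasing (choose an eigenfunction of eigenvalue $\lambda_2$ orthogonal to $\ind_V$, which exists by the spectral theorem regardless of connectivity) handles both cases uniformly, since Theorem~\ref{the:sparsecut} then yields $\theta(S_t)=0$ when $\lambda_2=0$.
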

\begin{proof}
Let $\emptyset\subsetneq S \subsetneq V$ attain an isoperimetric ratio of $\theta_{(G,\ups,\eps)}$. Consider the scalar field $f=\ind_S - \frac{\ups(S)}{\ups(V)}\ind_V$. We note that
$$\langle L_{(G,\ups,\eps)}f,f\rangle =\int_E\lVert\grad\ind_S\rVert^2d\eps = \eps(\grad S),$$
where the first equality follows because $\ind_V$ lies in the kernel of $\grad$. Additionally, we can compute from the definition of $f$ that
$$\langle f,f\rangle = \int_S\left|1-\frac{\ups(S)}{\ups(V)}\right|^2d\ups + \int_{S^c}\left|\frac{\ups(S)}{\ups(V)}\right|^2d\ups = \frac{\ups(S)\ups(S^c)}{\ups(V)}.$$
Note that
$$\ups(S)\ups(S^c) = \min\{\ups(S),\ups(S^c)\} \max\{\ups(S),\ups(S^c)\}$$
with
$$\frac{\max\{\ups(S),\ups(S^c)\}}{\ups(V)}\ge \frac{1}{2},$$
and so 
$$\frac{\lambda_2}{2}\le\frac{\langle L_{(G,\ups,\eps)}f,f\rangle}{2\langle f,f\rangle} \le \frac{\eps(\grad S)}{\min\{\ups(S),\ups(S^c)\}}=\theta_{(G,\ups,\eps)}.$$ The left inequality above is a consequence of Lemma~\ref{lem:rq} applied to $L_{(G,\ups,\eps)}$ restricted to the orthogonal complement of $\ind_V$, since nonzero $f\in L^2(V,\ups)$ satisfies
$$\int_V fd\ups = \int_S d\ups - \frac{\ups(S)}{\ups(V)}\int_V d\ups = \ups(S)-\frac{\ups(S)}{\ups(V)}\ups(V)=0.$$

For the other direction, let nonzero $f_2\in L^2(V,\ups)$ be an eigenfunction of $L_{(G,\ups,\eps)}$ orthogonal to $1_V$ with eigenvalue $\lambda_2$, and apply Theorem~\ref{the:sparsecut}.
\end{proof}

We next prove a bound on the weight of an \textit{independent set}, a subset $S\subseteq V$ with no two vertices of $S$ joined by an edge, using the largest weighted Laplacian eigenvalue $\lambda_n$ of a weighted graph.

\begin{theorem}\label{the:indep}
Let $(G,\ups,\eps)$ be a weighted multigraph with at least one nonloop edge and let $\lambda_n$ be its largest weighted Laplacian eigenvalue. Then for any nonempty independent set $S$, we have
$$\ups(S)\le \ups(V)\frac{\lambda_n-\frac{d_{(G,\eps)}(S)}{\ups(S)}}{\lambda_n}.$$
\end{theorem}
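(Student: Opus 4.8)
The plan is to bound $\lambda_n$ from below by the Rayleigh quotient of the test function $f=\ind_S-\frac{\ups(S)}{\ups(V)}\ind_V$, the same function used in the proof of Corollary~\ref{cor:cheeger}. Since $G$ has a nonloop edge, $S$ cannot equal $V$, so $\emptyset\subsetneq S\subsetneq V$ and hence $f$ is nonzero; Lemma~\ref{lem:rq} applied to the self-adjoint operator $L_{(G,\ups,\eps)}$ then gives $\lambda_n\ge \langle L_{(G,\ups,\eps)}f,f\rangle/\langle f,f\rangle$.

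For the numerator, since $\ind_V\in\ker\grad$ we have $\langle L_{(G,\ups,\eps)}f,f\rangle=\langle\grad\ind_S,\grad\ind_S\rangle=\int_E\lVert\grad\ind_S\rVert^2 d\eps=\eps(\grad S)$, the last equality because $\grad\ind_S$ has squared norm $1$ on each edge with exactly one endpoint in $S$ and $0$ on every other edge (in particular on loops). As $S$ is independent, every edge meeting $S$ has its other endpoint in $S^c$, so $\eps(\grad S)=\sum_{v\in S}d_{(G,\eps)}(v)=d_{(G,\eps)}(S)$. For the denominator, the computation carried out in the proof of Corollary~\ref{cor:cheeger} gives $\langle f,f\rangle=\ups(S)\ups(S^c)/\ups(V)$. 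Hence $\lambda_n\ge d_{(G,\eps)}(S)\ups(V)/(\ups(S)\ups(S^c))$.

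It then remains only to rearrange. Writing $\ups(S^c)=\ups(V)-\ups(S)$ turns this into $\lambda_n\ups(S)(\ups(V)-\ups(S))\ge d_{(G,\eps)}(S)\ups(V)$, i.e. $\ups(V)\bigl(\lambda_n\ups(S)-d_{(G,\eps)}(S)\bigr)\ge\lambda_n\ups(S)^2$; dividing by $\lambda_n\ups(S)>0$ --- legitimate because $\ups(S)>0$ as $S\neq\emptyset$, and $\lambda_n>0$ since $G$ has a nonloop edge and so $L_{(G,\ups,\eps)}$ has a positive eigenvalue --- yields exactly $\ups(S)\le\ups(V)\frac{\lambda_n-d_{(G,\eps)}(S)/\ups(S)}{\lambda_n}$. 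I do not expect any real obstacle here: the only points to watch are that $f\neq 0$ and $\lambda_n>0$, so that the final division is valid, and the elementary edge count $\eps(\grad S)=d_{(G,\eps)}(S)$, which relies on $S$ containing no edges --- in particular no loops --- of $G$.
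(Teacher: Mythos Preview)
Your proof is correct and follows essentially the same approach as the paper: both use the test function $f=\ind_S-\frac{\ups(S)}{\ups(V)}\ind_V$, compute $\langle L_{(G,\ups,\eps)}f,f\rangle=d_{(G,\eps)}(S)$ and $\langle f,f\rangle=\ups(S)\ups(S^c)/\ups(V)$, bound the Rayleigh quotient above by $\lambda_n$ via Lemma~\ref{lem:rq}, and rearrange. If anything, you are more explicit than the paper in checking that $f\neq 0$ (since $S\subsetneq V$) and that $\lambda_n>0$, so that the final division is legitimate.
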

\begin{proof}
Consider the scalar field $f=\ind_S - \frac{\ups(S)}{\ups(V)}\ind_{V}\in L^2(V,\ups)$. By the same calculations as in the proof of Theorem~\ref{cor:cheeger},
$$\langle L_{(G,\ups,\eps)}f,f\rangle =\eps(\grad S)=d_{(G,\eps)}(S),$$
where the right equality is because $S$ is an independent set and so $\grad S$ consists of all edges incident to any vertex of $S$, and
$$\langle f,f\rangle =\frac{\ups(S)\ups(S^c)}{\ups(V)}=\ups(S)\left(1-\frac{\ups(S)}{\ups(V)}\right).$$

Therefore, the Rayleigh quotient of $f$ satisfies
$$\frac{\langle L_{(G,\ups,\eps)}f,f\rangle}{\langle f,f\rangle}=\frac{d_{(G,\eps)}(S)}{\ups(S)\left(1-\frac{\ups(S)}{\ups(V)}\right)} \le \lambda_n,$$
by Lemma~\ref{lem:rq}. Rearranging gives the desired inequality
$$\ups(S)\le \ups(V)\frac{\lambda_n - \frac{d_{(G,\eps)}(S)}{\ups(S)}}{\lambda_n}.$$
\end{proof}

\begin{remark}
Lower bounding the quantity $\frac{d_{(G,\eps)}(S)}{\ups(S)}$ by $\min_{v\in V}\frac{d_{(G,\eps)}(v)}{\ups(v)}$ gives a weaker but perhaps more practical bound on the weight of an independent set, without the dependence on $S$ on the right hand side.
\end{remark}

When $L_{(G,\ups,\eps)}$ is the combinatorial Laplacian, we obtain the following bound on the size of an independent set, which is also \cite[Corollary 3.5]{GN} of Godsil and Newman when all edge weights are $1$.

\begin{corollary}
Let $(G,\eps)$ be an edge-weighted graph with at least one nonloop edge and let $\lambda_n$ be its largest combinatorial Laplacian eigenvalue. Then for any nonempty independent set $S$, we have
$$|S|\le n\frac{\lambda_n-\bar d_{(G,\eps)}(S)}{\lambda_n},$$
where $\bar d_{(G,\eps)}(S)$ denotes the average degree of a vertex in $S$.
\end{corollary}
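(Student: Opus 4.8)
The plan is to obtain this corollary as the immediate specialisation of Theorem~\ref{the:indep} to the case of unit vertex weights. Concretely, I would take $\ups = \#_V$ to be the counting measure, so that $W_\ups = I$ and hence $L_{(G,\ups,\eps)} = W_\ups^{-1}L_{(G,\eps)} = L_{(G,\eps)}$ is exactly the combinatorial Laplacian, as recorded in Section~\ref{sec:calc}. In particular the largest weighted Laplacian eigenvalue $\lambda_n$ of $(G,\#_V,\eps)$ coincides with the largest combinatorial Laplacian eigenvalue of $(G,\eps)$, and the hypothesis that $G$ has at least one nonloop edge is precisely the hypothesis of Theorem~\ref{the:indep} (which also guarantees $\lambda_n > 0$, so that the division below is legitimate).

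Next I would translate the quantities appearing in Theorem~\ref{the:indep} into the unweighted language. With $\ups = \#_V$ we have $\ups(S) = |S|$ and $\ups(V) = n$. Moreover $d_{(G,\eps)}(S)$ as used in the proof of Theorem~\ref{the:indep} is $\eps(\grad S) = \sum_{v\in S}d_{(G,\eps)}(v)$, the second equality holding because $S$ is independent and so $\grad S$ consists of exactly the edges incident to a vertex of $S$, each counted once. Therefore
$$\frac{d_{(G,\eps)}(S)}{\ups(S)} = \frac{\sum_{v\in S}d_{(G,\eps)}(v)}{|S|} = \bar d_{(G,\eps)}(S),$$
by the definition of the average degree of a vertex of $S$.

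Finally I would substitute these identities into the conclusion of Theorem~\ref{the:indep}, namely $\ups(S)\le \ups(V)\frac{\lambda_n-\frac{d_{(G,\eps)}(S)}{\ups(S)}}{\lambda_n}$, to obtain $|S| \le n\,\frac{\lambda_n - \bar d_{(G,\eps)}(S)}{\lambda_n}$, which is the desired bound. There is no real obstacle here: the corollary is a direct dictionary translation, and the only points meriting an explicit word are that unit vertex weights recover the combinatorial Laplacian and that independence of $S$ lets one rewrite $\eps(\grad S)$ as the sum of the degrees of the vertices of $S$.
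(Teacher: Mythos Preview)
Your proposal is correct and matches the paper's approach: the paper states this corollary immediately after Theorem~\ref{the:indep} as the specialisation obtained by taking $\ups=\#_V$, without giving any further proof. Your translation of the relevant quantities ($\ups(S)=|S|$, $\ups(V)=n$, and $d_{(G,\eps)}(S)/\ups(S)=\bar d_{(G,\eps)}(S)$) is exactly the intended one.
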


Additionally, one could think to apply Theorem~\ref{the:indep} when $L_{(G,\ups,\eps)}$ is the normalised Laplacian, which gives the following bound used by Chung \cite{ChungBook} in her proof of Corollary~\ref{cor:normcol}.

\begin{corollary} Let $(G,\eps)$ be an edge-weighted graph with no isolated vertices and at least one nonloop edge and let $\lambda_n$ be its largest normalised Laplacian eigenvalue. Then for any nonempty independent set $S$, we have
$$d_{(G,\eps)}(S) \le 2\eps(E) \frac{\lambda_n-1}{\lambda_n}.$$
\end{corollary}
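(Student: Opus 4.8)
The plan is to deduce this corollary directly from Theorem~\ref{the:indep} by specialising the weighted Laplacian to the normalised Laplacian, that is, by taking vertex weights $\ups = d_{(G,\eps)}$. First I would observe that since $(G,\eps)$ has no isolated vertices, $d_{(G,\eps)}$ is a strictly positive function $V\to\mathbb R_{>0}$, so $(G,d_{(G,\eps)},\eps)$ is a genuine weighted multigraph. As recalled in Section~\ref{sec:calc}, the weighted Laplacian of $(G,d_{(G,\eps)},\eps)$ is exactly the normalised Laplacian $\nL_{(G,\eps)}$, so $\lambda_n$ is precisely the largest weighted Laplacian eigenvalue of $(G,d_{(G,\eps)},\eps)$; together with the hypothesis that $G$ has at least one nonloop edge, this puts us in position to invoke Theorem~\ref{the:indep}.

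Next I would evaluate the two quantities appearing in the bound of Theorem~\ref{the:indep} under this choice of $\ups$. For any $S\subseteq V$ we have $\ups(S) = \sum_{v\in S} d_{(G,\eps)}(v) = d_{(G,\eps)}(S)$; taking $S = V$ gives $\ups(V) = \sum_{v\in V} d_{(G,\eps)}(v) = 2\eps(E)$, since each edge, loop or not, contributes twice its weight to the sum of all vertex degrees. In particular, for a nonempty independent set $S$ we have $\ups(S) = d_{(G,\eps)}(S) > 0$ (again using that $G$ has no isolated vertices), so the term $\frac{d_{(G,\eps)}(S)}{\ups(S)}$ occurring in Theorem~\ref{the:indep} is simply equal to $1$.

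Substituting these identities into the conclusion $\ups(S) \le \ups(V)\frac{\lambda_n - d_{(G,\eps)}(S)/\ups(S)}{\lambda_n}$ of Theorem~\ref{the:indep} then yields $d_{(G,\eps)}(S) \le 2\eps(E)\frac{\lambda_n - 1}{\lambda_n}$, which is the claimed bound. I do not expect any genuine obstacle here: the proof is a one-line specialisation, and the only points that warrant care are checking that $\ups = d_{(G,\eps)}$ is an admissible (strictly positive) vertex weight — this is exactly where the no-isolated-vertices hypothesis is used — and the elementary handshake-type identity $\sum_{v\in V} d_{(G,\eps)}(v) = 2\eps(E)$ under the paper's convention that loops are counted twice in the degree.
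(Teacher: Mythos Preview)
Your proposal is correct and is exactly the approach the paper intends: the corollary is stated immediately after the sentence ``one could think to apply Theorem~\ref{the:indep} when $L_{(G,\ups,\eps)}$ is the normalised Laplacian,'' and the paper gives no further proof. Your specialisation $\ups = d_{(G,\eps)}$ together with the identities $\ups(S)=d_{(G,\eps)}(S)$, $\ups(V)=2\eps(E)$, and $d_{(G,\eps)}(S)/\ups(S)=1$ is precisely what is needed.
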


We conclude this section with results on \textit{proper colourings} of a graph $G$, which are maps
$$\kappa: V\to \mathbb Z_{>0}$$
with the property that $\kappa(u)\neq\kappa(v)$ whenever $u,v$ are joined by an edge in $E$. More specifically, we will give spectral lower bounds on the \textit{chromatic number} $\chi(G)$ of a loopless graph $G$, which is the least $t\in \mathbb Z_{>0}$ for which there exists a \textit{proper $t$-colouring} of $G$, a proper colouring $\kappa$ with $\kappa(v)\in [t]$ for all vertices $v\in V$.

\begin{theorem}\label{the:chrom}
Let $(G,\ups,\eps)$ be a loopless weighted multigraph with at least one edge and let $\lambda_n$ be its largest weighted Laplacian eigenvalue. Then
$$\chi(G)\ge \frac{\lambda_n}{\lambda_n - \frac{2\eps(E)}{\ups(V)}}.$$
\end{theorem}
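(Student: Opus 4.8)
The plan is to apply the independent‑set bound of Theorem~\ref{the:indep} to each colour class of an optimal proper colouring of $G$, and then combine the resulting inequalities using convexity (Cauchy–Schwarz).

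First I would set $t=\chi(G)$ and fix a proper $t$-colouring $\kappa\colon V\to[t]$, with colour classes $V_i=\kappa^{-1}(i)$. Since $G$ has an edge, $t\ge 2$, and we may assume each $V_i$ is nonempty (otherwise $\kappa$ would properly colour $G$ with fewer colours). Each $V_i$ is independent, the $V_i$ partition $V$, so $\sum_{i=1}^{t}\ups(V_i)=\ups(V)$; and because $G$ is loopless and each $V_i$ is independent, every edge contributes its weight to exactly two of the quantities $d_{(G,\eps)}(V_i)$, so $\sum_{i=1}^{t}d_{(G,\eps)}(V_i)=\sum_{v\in V}d_{(G,\eps)}(v)=2\eps(E)$.

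Next, for each $i$ I would apply Theorem~\ref{the:indep} to $V_i$ (valid since $G$ has a nonloop edge and $V_i\ne\emptyset$), and rearrange. Multiplying $\ups(V_i)\le\ups(V)\bigl(\lambda_n-d_{(G,\eps)}(V_i)/\ups(V_i)\bigr)/\lambda_n$ through by $\lambda_n\ups(V_i)>0$ and isolating the degree term gives
$$d_{(G,\eps)}(V_i)\le\frac{\lambda_n}{\ups(V)}\,\ups(V_i)\bigl(\ups(V)-\ups(V_i)\bigr).$$
Summing over $i$, using $\sum_i\ups(V_i)=\ups(V)$ and the Cauchy–Schwarz bound $\sum_i\ups(V_i)^2\ge\ups(V)^2/t$, yields
$$2\eps(E)=\sum_{i=1}^{t}d_{(G,\eps)}(V_i)\le\frac{\lambda_n}{\ups(V)}\Bigl(\ups(V)^2-\tfrac1t\ups(V)^2\Bigr)=\lambda_n\ups(V)\Bigl(1-\tfrac1t\Bigr).$$
Since $t\ge 2$ this already shows $2\eps(E)/\ups(V)<\lambda_n$, so $\lambda_n-2\eps(E)/\ups(V)>0$, and rearranging the displayed inequality gives $t\ge\lambda_n/\bigl(\lambda_n-2\eps(E)/\ups(V)\bigr)$, which is the claim as $t=\chi(G)$.

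I expect the only delicate points to be bookkeeping: verifying the degree‑sum identity $\sum_i d_{(G,\eps)}(V_i)=2\eps(E)$ (which genuinely uses loopfreeness and independence of the colour classes), and checking that the denominator $\lambda_n-2\eps(E)/\ups(V)$ is positive so that the final rearrangement is legitimate — this is not assumed in the hypotheses but drops out of the inequality chain once $t\ge 2$ (and $\lambda_n>0$ because $G$ has an edge). The spectral input, Theorem~\ref{the:indep}, does all the real work.
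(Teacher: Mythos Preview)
Your proof is correct, but it takes a genuinely different route from the paper's. The paper proves Theorem~\ref{the:chrom} by a probabilistic argument: it fixes a proper $\chi(G)$-colouring $\kappa$, defines scalar fields $x_{\sigma\kappa}(v)=\cos\frac{2\pi\sigma\kappa(v)}{\chi(G)}$ and $y_{\sigma\kappa}(v)=\sin\frac{2\pi\sigma\kappa(v)}{\chi(G)}$ for a uniformly random permutation $\sigma\in S_{\chi(G)}$, computes the expected value of the combined Rayleigh quotient $\frac{\langle L x_{\sigma\kappa},x_{\sigma\kappa}\rangle+\langle L y_{\sigma\kappa},y_{\sigma\kappa}\rangle}{\langle x_{\sigma\kappa},x_{\sigma\kappa}\rangle+\langle y_{\sigma\kappa},y_{\sigma\kappa}\rangle}$ to be exactly $\frac{2\eps(E)\chi(G)}{\ups(V)(\chi(G)-1)}$, and then bounds this by $\lambda_n$ for some realisation of $\sigma$. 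Your approach instead applies the independent-set bound (Theorem~\ref{the:indep}) to each colour class and combines via Cauchy--Schwarz on the $\ups(V_i)$. This is precisely the strategy the paper attributes, in the remark following Corollary~\ref{cor:normcol}, to Spielman and Chung in the combinatorial and normalised special cases; you have written out the general weighted version cleanly. Your argument is shorter and more direct, reusing Theorem~\ref{the:indep}; the paper's argument is self-contained (it does not rely on Theorem~\ref{the:indep}) and, as the remark notes, actually yields something slightly sharper --- an explicit scalar field of a specific form with Rayleigh quotient at least $\frac{2\eps(E)\chi(G)}{\ups(V)(\chi(G)-1)}$.
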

\begin{proof}
Fix a proper $\chi(G)$-colouring $\kappa: V\to [\chi(G)]$ of $G$. Define $x_\kappa,y_\kappa\in L^2(V,\ups)$ to be the scalar fields with $x_\kappa(v)= \cos \frac{2\pi \kappa(v)}{\chi(G)} $ and $y_\kappa(v) = \sin \frac{2\pi \kappa(v)}{\chi(G)}$ for all $v\in V$, and extend the notation similarly to other proper $\chi(G)$-colourings of $G$.

Let $\sigma \in S_{\chi(G)}$ be chosen uniformly at random. Then
\begin{align*}
\ex\frac{\langle L_{(G,\ups,\eps)}x_{\sigma\kappa},x_{\sigma\kappa}\rangle+\langle L_{(G,\ups,\eps)}y_{\sigma\kappa},y_{\sigma\kappa}\rangle}{\langle x_{\sigma\kappa},x_{\sigma\kappa}\rangle+\langle y_{\sigma\kappa},y_{\sigma\kappa}\rangle}&=\ex \frac{\int_E\lVert\grad x_{\sigma\kappa}\rVert^2d\eps+\int_E\lVert\grad y_{\sigma\kappa}\rVert^2d\eps}{\int_V |x_{\sigma\kappa}|^2d\ups+\int_V |y_{\sigma\kappa}|^2d\ups}\\ &=\frac{\ex[\int_E\lVert\grad x_{\sigma\kappa}\rVert^2d\eps+\int_E\lVert\grad y_{\sigma\kappa}\rVert^2d\eps]}{\ups(V)}\\
&=\frac{\int_E\ex[\lVert\grad x_{\sigma\kappa}\rVert^2+\lVert\grad y_{\sigma\kappa}\rVert^2]d\eps}{\ups(V)},
\end{align*}
where the second line follows since $\cos^2 \frac{2\pi\sigma(\kappa(v))}{\chi(G)}+\sin^2 \frac{2\pi\sigma(\kappa(v))}{\chi(G)}=1$ for all $\sigma\in S_{\chi(G)}$ and for all vertices $v\in V$, and the third line follows by interchanging the order of expectation and integration.

Note for any edge $e\in E$ that the values of its endpoints under $\sigma\kappa$ can be any pair of distinct values in $[\chi(G)]$ with equal likelihood. Then, letting $\omega=\cos\frac{2\pi}{\chi(G)}+  i\sin\frac{2\pi}{\chi(G)}$,
\begin{align*}
    \ex[\lVert\grad x_{\sigma\kappa}(e)\rVert^2+\lVert\grad y_{\sigma\kappa}(e)\rVert^2] &= \frac{1}{\chi(G)(\chi(G)-1)}\sum_{\substack{1\le i \le j \le \chi(G)\\ i\neq j}} |\omega^i-\omega^j|^2\\
    &=\frac{1}{\chi(G)-1}\sum_{i=1}^{\chi(G)-1} |\omega^i-1|^2\\
    &= \frac{1}{\chi(G)-1}\sum_{i=1}^{\chi(G)-1}(2-\omega^i-\omega^{-i})\\
    &=\frac{2}{\chi(G)-1}\left(\chi(G)-1-\sum_{i=1}^{\chi(G)-1}\omega^i\right)= \frac{2\chi(G)}{\chi(G)-1}
\end{align*}
for every edge $e\in E$. Therefore, 
$$\ex \frac{\langle L_{(G,\ups,\eps)}x_{\sigma\kappa},x_{\sigma\kappa}\rangle+\langle L_{(G,\ups,\eps)}y_{\sigma\kappa},y_{\sigma\kappa}\rangle}{\langle x_{\sigma\kappa},x_{\sigma\kappa}\rangle+\langle y_{\sigma\kappa},y_{\sigma\kappa}\rangle}=\frac{2\eps(E)\chi(G)}{\ups(V)(\chi(G)-1)},$$
and so there exists some $\sigma\in S_{\chi(G)}$ for which 
\begin{align*}
\frac{2\eps(E)\chi(G)}{\ups(V)(\chi(G)-1)}&\le\frac{\langle L_{(G,\ups,\eps)}x_{\sigma\kappa},x_{\sigma\kappa}\rangle+\langle L_{(G,\ups,\eps)}y_{\sigma\kappa},y_{\sigma\kappa}\rangle}{\langle x_{\sigma\kappa},x_{\sigma\kappa}\rangle+\langle y_{\sigma\kappa},y_{\sigma\kappa}\rangle}\\
&\le \max\left\{\frac{\langle L_{(G,\ups,\eps)}x_{\sigma\kappa},x_{\sigma\kappa}\rangle}{\langle x_{\sigma\kappa},x_{\sigma\kappa}\rangle},\frac{\langle L_{(G,\ups,\eps)}y_{\sigma\kappa},y_{\sigma\kappa}\rangle}{\langle y_{\sigma\kappa},y_{\sigma\kappa}\rangle}\right\}\le\lambda_n,
\end{align*}
where on the second line we take the maximum among the terms that are defined and apply Lemma~\ref{lem:rq}. Rearranging gives the desired inequality
$$\chi(G)\ge \frac{\lambda_n}{\lambda_n - \frac{2\eps(E)}{\ups(V)}}.$$
\end{proof}

When $L_{(G,\ups,\eps)}$ is the combinatorial Laplacian, we obtain the following, which was also found by Spielman in \cite[Section 3.7]{Spiel} in the case where all edge weights are $1$.

\begin{corollary}\label{cor:combcol}
Let $(G,\eps)$ be a loopless edge-weighted multigraph with at least one edge and let $\lambda_n$ be its largest combinatorial Laplacian eigenvalue. Then
$$\chi(G)\ge \frac{\lambda_n}{\lambda_n - \bar d_{(G,\eps)}},$$
where $\bar d_{(G,\eps)}$ denotes the average degree of a vertex.
\end{corollary}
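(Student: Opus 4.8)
The plan is to obtain Corollary~\ref{cor:combcol} as the special case of Theorem~\ref{the:chrom} in which all vertex weights equal $1$. Concretely, I would take $\ups=\#_V$, the counting measure on $V$, so that by the discussion following Definition~\ref{def:wlap} the weighted Laplacian $L_{(G,\#_V,\eps)}$ is exactly the combinatorial Laplacian $L_{(G,\eps)}$; in particular the quantity $\lambda_n$ appearing in Theorem~\ref{the:chrom} coincides with the largest combinatorial Laplacian eigenvalue of $(G,\eps)$. Since $G$ is loopless and has at least one edge, the hypotheses of Theorem~\ref{the:chrom} are satisfied, and it yields
$$\chi(G)\ge \frac{\lambda_n}{\lambda_n-\frac{2\eps(E)}{\ups(V)}}.$$

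It then remains only to identify $\frac{2\eps(E)}{\ups(V)}$ with the average degree $\bar d_{(G,\eps)}$. Here $\ups(V)=\#_V(V)=n$, the number of vertices, so the one thing to verify is that $2\eps(E)=\sum_{v\in V}d_{(G,\eps)}(v)$. This is a routine double count: by definition $d_{(G,\eps)}(v)=\sum_{u\in V}\eps(u,v)$, and summing over all $v$ counts each nonloop edge once at each of its two endpoints, with no correction term because $G$ has no loops. Hence $\frac{2\eps(E)}{\ups(V)}=\frac{1}{n}\sum_{v\in V}d_{(G,\eps)}(v)=\bar d_{(G,\eps)}$, and substituting into the displayed inequality gives the claim. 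There is no genuine obstacle, since all of the work is already contained in Theorem~\ref{the:chrom}; the only subtlety is that the identity $2\eps(E)=\sum_{v\in V}d_{(G,\eps)}(v)$ relies on the looplessness hypothesis (a loop at $v$ would contribute $2$ to $d_{(G,\eps)}(v)$ but only $1$ to $\eps(E)$, given the convention fixed in Section~\ref{sec:bg}).
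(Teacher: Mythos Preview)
Your proof is correct and follows exactly the paper's approach: specialise Theorem~\ref{the:chrom} at $\ups=\#_V$ and identify $\frac{2\eps(E)}{\ups(V)}$ with the average degree. One small remark: your parenthetical about looplessness being needed for $\sum_v d_{(G,\eps)}(v)=2\eps(E)$ is not quite right under the paper's conventions, since Section~\ref{sec:bg} stipulates that loops are counted twice in $\eps(u,v)$, so a loop of weight $w$ contributes $2w$ to the degree sum and $w$ to $\eps(E)$, and the identity still holds; this does not affect your argument since $G$ is assumed loopless anyway.
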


Finally, if we apply our spectral bound on the chromatic number when $L_{(G,\ups,\eps)}$ is the normalised Laplacian, we obtain Chung's \cite[Theorem 6.7]{ChungBook}, which also appears in the work of Coutinho, Grandsire and Passos in \cite[Lemma 3.3]{CGP} when all edge weights are $1$.

\begin{corollary}\cite[Theorem 6.7]{ChungBook}\label{cor:normcol}
Let $(G,\eps)$ be a loopless edge-weighted multigraph with no isolated vertices and let $\lambda_n$ be its largest normalised Laplacian eigenvalue. Then
$$\chi(G)\ge 1 + \frac{1}{\lambda_n - 1}.$$
\end{corollary}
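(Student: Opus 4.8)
The plan is to derive Corollary~\ref{cor:normcol} as the special case of Theorem~\ref{the:chrom} in which the vertex measure is the degree measure $d_{(G,\eps)}$. Recall from the discussion following Definition~\ref{def:wlap} that the normalised Laplacian $\nL_{(G,\eps)}$ is precisely the weighted Laplacian $L_{(G,d_{(G,\eps)},\eps)}$, so $\lambda_n$ is also the largest weighted Laplacian eigenvalue of the weighted graph $(G,d_{(G,\eps)},\eps)$.

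First I would verify the hypotheses of Theorem~\ref{the:chrom}: $G$ is loopless by assumption, and since $(G,\eps)$ has no isolated vertices, every vertex has positive degree, so in particular $G$ has at least one edge. Applying Theorem~\ref{the:chrom} to $(G,d_{(G,\eps)},\eps)$ therefore yields
$$\chi(G)\ge \frac{\lambda_n}{\lambda_n-\dfrac{2\eps(E)}{d_{(G,\eps)}(V)}}.$$
The only computation needed is the weighted handshake identity $d_{(G,\eps)}(V)=\sum_{v\in V} d_{(G,\eps)}(v)=2\eps(E)$, which holds because $G$ is loopless and so each edge $uv$ contributes its weight $\eps(uv)$ to exactly the two degrees $d_{(G,\eps)}(u)$ and $d_{(G,\eps)}(v)$. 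Hence $\dfrac{2\eps(E)}{d_{(G,\eps)}(V)}=1$, and substituting gives
$$\chi(G)\ge\frac{\lambda_n}{\lambda_n-1}=1+\frac{1}{\lambda_n-1},$$
as desired.

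I do not expect a genuine obstacle here: this is a direct specialisation once the normalised Laplacian is recognised as a weighted Laplacian. The only minor points requiring care are confirming that the hypothesis ``no isolated vertices'' supplies the ``at least one edge'' hypothesis of Theorem~\ref{the:chrom}, and noting (for instance, from $\operatorname{tr}\nL_{(G,\eps)}=n$ together with the fact that $G$ has an edge, forcing not all eigenvalues to equal $1$) that $\lambda_n>1$, so that the displayed bound is meaningful rather than vacuous.
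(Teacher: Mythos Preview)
Your proof is correct and follows precisely the route the paper intends: the corollary is stated as the specialisation of Theorem~\ref{the:chrom} to $\ups=d_{(G,\eps)}$, and your verification of the hypotheses together with the handshake identity $d_{(G,\eps)}(V)=2\eps(E)$ is exactly what is needed. Your additional remark that $\lambda_n>1$ (so the bound is nondegenerate) is a nice touch not made explicit in the paper.
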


\begin{remark}
Spielman's \cite{Spiel} proof of Corollary~\ref{cor:combcol} in the case of unit edge weights and Chung's \cite{ChungBook} proof of Corollary~\ref{cor:normcol} used Theorem~\ref{the:indep} applied to the combinatorial and normalised Laplacians, respectively. Coutinho, Grandsire and Passos in \cite{CGP} gave a proof of Corollary~\ref{cor:normcol} in the case of unit edge weights using interlacing.

Our proof of Theorem~\ref{the:chrom} is distinct from both approaches and demonstrates an application of the probabilistic method. Moreover, our proof shows something slightly stronger than the theorem statement, namely that there exists a nonzero scalar field of one of the forms $x_{\sigma\kappa},y_{\sigma\kappa}$ with Rayleigh quotient $\ge \frac{2\eps(E)\chi(G)}{\ups(V)(\chi(G)-1)}$.
\end{remark}

\section{New families of trees distinguished by the chromatic symmetric function}\label{sec:csf}
In our final section, we will use deletion-contraction to relate the weighted Laplacian characteristic polynomial to Stanley's chromatic symmetric function \cite{Stan95}, and use this connection to prove new cases of a conjecture of Stanley.

We begin by introducing a few new definitions. An \textit{integer partition} $\alpha=(\alpha_1,\dots,\alpha_{\ell(\alpha)})$ is a list of positive integers $\alpha_1\ge\dots\ge\alpha_{\ell(\alpha)}$ sorted in weakly decreasing order. If $\alpha_1+\dots+\alpha_{\ell(\alpha)}=n$, then we write $\alpha\vdash n$. $\Sym$, the \textit{algebra of symmetric functions}, may be realised as a subalgebra of $\mathbb R[[x_1,x_2,\dots]]$, where the variables $x_j$ commute, as follows. The \textit{$i$th power sum symmetric function} $p_i$ is defined by
$$p_i=\sum_j x_j^i.$$
Given a partition $\alpha=(\alpha_1,\dots,\alpha_{\ell(\alpha)})$, the \textit{power sum symmetric function} $p_\alpha$ is
$$p_\alpha=\prod_{i=1}^{\ell(\alpha)}p_{\alpha_i}.$$
Then $\Sym$ is the algebra spanned by the basis $\{p_\alpha\}_{\alpha\vdash n\ge0}$.

Stanley's chromatic symmetric function \cite{Stan95} was extended to vertex-weighted graphs with integer weights by Crew and Spirkl in \cite{CS}.

\begin{definition}
The \textit{chromatic symmetric function} of an integer vertex-weighted multigraph $(G,\ups)$ with vertex set $\{v_1,\dots,v_n\}$ is
$$X_{(G,\ups)}=\sum_{\kappa} x_{\kappa(v_1)}^{\ups(v_1)}\dots x_{\kappa(v_n)}^{\ups(v_n)},$$
where the sum is over all proper colourings $\kappa$ of $G$.
\end{definition}

The motivation of Crew and Spirkl to consider vertex-weighted graphs was to obtain a deletion-contraction recurrence, which we state next.

\begin{theorem}\cite[Lemma 2]{CS}\label{csfdc}
Let $(G,\ups)$ be an integer vertex-weighted multigraph and let $e$ be an edge. Then
$$X_{(G,\ups)}= X_{(G-e,\ups)}-X_{(G/e,\ups/e)}.$$
\end{theorem}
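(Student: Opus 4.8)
The plan is to prove the identity by partitioning the colourings counted by $X_{(G-e,\ups)}$ according to the colours received by the two endpoints of $e$. Recall that $X_{(G-e,\ups)}$ is the sum, over proper colourings $\kappa$ of $G-e$, of the monomial $x_{\kappa(v_1)}^{\ups(v_1)}\cdots x_{\kappa(v_n)}^{\ups(v_n)}$, and that a colouring is proper on $G-e$ exactly when it is proper on every edge of $G$ other than $e$. So I would split these colourings into those whose endpoints of $e$ get distinct colours and those whose endpoints of $e$ get equal colours, and identify the two resulting partial sums as $X_{(G,\ups)}$ and $X_{(G/e,\ups/e)}$ respectively.

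Take $e=uv$ with $u\neq v$. The colourings proper on $G-e$ with $\kappa(u)\neq\kappa(v)$ are precisely the proper colourings of $G$, and their monomials are literally unchanged, so this block contributes exactly $X_{(G,\ups)}$. For the block with $\kappa(u)=\kappa(v)$, I would introduce the natural map sending such a $\kappa$ to the colouring of $G/e$ that assigns the common colour $\kappa(u)$ to the contracted vertex $w$ and agrees with $\kappa$ on all other vertices. One checks this is a bijection onto the proper colourings of $G/e$: every edge of $G/e$ is an edge $ab$ of $G-e$ with any occurrence of $u$ or $v$ relabelled to $w$, so properness on $ab$ reads off identically on the two sides, and conversely every proper colouring of $G/e$ pulls back. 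On monomials, the two factors $x_{\kappa(u)}^{\ups(u)}x_{\kappa(v)}^{\ups(v)}$ merge into $x_{\kappa(u)}^{\ups(u)+\ups(v)}=x_{\kappa(u)}^{(\ups/e)(w)}$, which is exactly the factor the contracted vertex contributes to $X_{(G/e,\ups/e)}$, while every other factor is unchanged; so this block contributes $X_{(G/e,\ups/e)}$. Adding the two blocks gives $X_{(G-e,\ups)}=X_{(G,\ups)}+X_{(G/e,\ups/e)}$, which rearranges to the claimed recurrence.

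I would then check that the argument survives the degeneracies allowed in a multigraph. If $G$ has another edge parallel to $e$, then no colouring proper on $G-e$ can satisfy $\kappa(u)=\kappa(v)$, so the second block is empty; correspondingly that parallel edge becomes a loop in $G/e$, forcing $X_{(G/e,\ups/e)}=0$, so the identity still holds. If $e$ is itself a loop at a vertex $v$, then $G$ admits no proper colouring and $X_{(G,\ups)}=0$, while contracting a loop just deletes it and leaves the vertex weight unchanged, so $G/e=G-e$ as weighted graphs and both sides vanish.

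There is no genuine obstacle here: the whole content is a single weight-preserving bijection between the colourings of $G-e$ whose endpoints of $e$ share a colour and the proper colourings of $G/e$. The only two points needing care are confirming that properness is transported correctly across the contraction, including the degenerate cases where loops are created, and matching the exponent bookkeeping to the convention $(\ups/e)(w)=\ups(u)+\ups(v)$ for the contracted vertex weight; both are routine.
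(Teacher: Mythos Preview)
Your proof is correct and is precisely the standard bijective argument: partition the proper colourings of $G-e$ by whether the endpoints of $e$ receive equal or distinct colours, identifying the two blocks with the proper colourings of $G/e$ and $G$ respectively. The paper does not supply its own proof of this statement; it is quoted as \cite[Lemma~2]{CS}, and your argument is essentially the one given there.
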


\begin{remark}
Due to the deletion-contraction relation, the chromatic symmetric function $X_{(G,\ups)}$ is, up to a sign, an evaluation of the $W$-polynomial of Noble and Welsh \cite[Section 2]{NW} for integer vertex-weighted graphs. Similarly, by the relations in Theorem~\ref{the:delcon} and Lemma~\ref{lem:simplify}(ii), the polynomial $P_{(G,\ups)}(t)$ for integer vertex-weighted graphs is also, up to a sign, an evaluation of the $W$-polynomial.

In the special case where the graph is unweighted, the $W$-polynomial is known as the $U$-polynomial. The $U$-polynomial, Brylawski's polychromate \cite{Brylawski} and a generalisation of Stanley's chromatic symmetric function known as the Tutte symmetric function \cite{Stan98} have been shown to be equivalent in \cite{NW, Sarmiento}. Because the combinatorial Laplacian characteristic polynomial of an unweighted graph is, up to a sign, an evaluation of the $U$-polynomial, results on the three equivalent graph polynomials can give results on the combinatorial Laplacian.

For example, the construction of graphs with equal Tutte symmetric function in \cite[Theorem 11]{ACSZ}, which generalises a construction of Brylawski of graphs with equal polychromate in \cite[Theorem 4.6]{Brylawski}, also constructs graphs with equal combinatorial Laplacian spectrum. Because it employs a deletion-contraction argument, it can also be modified to construct weighted graphs with equal weighted Laplacian eigenvalues, even when not all weights are equal to $1$.

Another application is to the graph reconstruction conjecture of Kelly and Ulam, which asks whether for all graphs $G$ on at least three vertices the graph can be determined, up to isomorphism, from the multiset of isomorphism classes of the graphs $G-v$ for $v\in V$; see \cite{Bondy} for a survey. One approach to the conjecture is to study which graph polynomials and invariants are \textit{reconstructible}, or can be deduced from the multiset of isomorphism classes of the $G-v$ for all graphs $G$ on at least three vertices. 

Tutte showed in \cite{TutRec} that various polynomials and invariants, including the adjacency characteristic polynomial and chromatic polynomial of a graph, are reconstructible. It turns out that the combinatorial Laplacian characteristic polynomial is also reconstructible, because the $U$-polynomial is reconstructible, e.g. by considering the spanning subgraph expansion of the $U$-polynomial in \cite[Proposition 5.1]{NW}, and applying \cite[Theorem 6.7]{TutRec} and \cite[Theorem 6.8]{TutRec} to reconstruct the terms corresponding to the disconnected and connected spanning subgraphs, respectively.
\end{remark}

Stanley's original definition in \cite[Definition 2.1]{Stan95} considered only unweighted graphs, which is the case where all vertex weights are equal to $1$. We say that an (unweighted) simple graph is \textit{distinguished by the chromatic symmetric function} if any other simple graph with the same chromatic symmetric function must be isomorphic. Stanley's tree isomorphism conjecture is as follows.

\begin{conjecture}\cite[Section 2]{Stan95} \label{conj:trees}
Unweighted trees are distinguished by the chromatic symmetric function.
\end{conjecture}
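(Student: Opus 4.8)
This statement is a longstanding open conjecture, so I cannot offer a complete proof; instead I will describe the strategy I would pursue, which exploits the deletion-contraction machinery of this paper to extract spectral and enumerative invariants from the chromatic symmetric function and should, at a minimum, yield new cases in the spirit of Corollary~\ref{cor:newtrees}.

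First I would pin down the precise relationship between the chromatic symmetric function and the weighted Laplacian characteristic polynomial. Let $\phi:\Sym\to\mathbb{R}[t]$ be the algebra homomorphism determined by $\phi(p_i)=it$ for all $i$. I claim that $\phi(X_{(G,\ups)})=\p_{(G,\ups)}(t)$ for every integer vertex-weighted multigraph $(G,\ups)$ with unit edge weights. Both sides satisfy the same recursion: applying the linear map $\phi$ to Theorem~\ref{csfdc} gives $\phi(X_{(G,\ups)})=\phi(X_{(G-e,\ups)})-\phi(X_{(G/e,\ups/e)})$, which matches Theorem~\ref{the:delcon} specialised to $\eps(e)=1$. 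On the edgeless base case, a disjoint union of vertices of weights $\ups(v_i)$ has $X_{(G,\ups)}=\prod_i p_{\ups(v_i)}$ and $\p_{(G,\ups)}(t)=t^{n}\prod_i\ups(v_i)$, and $\phi$ carries the former to the latter. Since iterated deletion-contraction reduces any graph to edgeless ones, induction establishes the identity. In particular, the chromatic symmetric function of an unweighted tree determines its combinatorial Laplacian characteristic polynomial, hence its Laplacian spectrum, and via Theorem~\ref{the:mforest} the coefficients $c_k=\sum_F w(F)$ enumerating $k$-component spanning forests weighted by the product of component sizes.

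Second, I would extract finer data from the full power-sum expansion. For a tree $T$ on $n$ vertices one has $X_T=\sum_{S\subseteq E(T)}(-1)^{|S|}p_{\lambda(S)}$, where $\lambda(S)$ records the vertex-sizes of the components of the spanning subforest $(V,S)$; thus $X_T$ is equivalent to the multiset of component-size partitions over all subforests of $T$. The plan is an inductive peeling argument: recover from this multiset a canonical peripheral feature of $T$ (for instance the number of leaves, the multiset of pendant-path lengths, or the local structure at the leaves, all visible in low-order coefficients and in specialisations of $X_T$), use Theorem~\ref{csfdc} and Lemma~\ref{lem:simplify} to relate $X_T$ to the chromatic symmetric functions of the smaller trees obtained by deleting or contracting that pendant structure, and then induct on the number of vertices. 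Each removable invariant one can reliably identify enlarges the family of trees that can be reconstructed.

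The main obstacle is intrinsic: proving that no two non-isomorphic trees share the entire subforest component-size statistic \emph{is} the conjecture, and the peeling must be shown to terminate in a well-defined isomorphism type for every tree rather than for a structured subfamily. The delicate point is that the invariants provably recoverable from $X_T$ by the specialisation $\phi$ (the spectrum, the forest counts, and leaf and path data) are known to fail to separate arbitrary trees on their own, so the argument cannot rest on any fixed finite list of invariants. One therefore needs either a single invariant rich enough to be proved injective on tree isomorphism classes, or a peeling scheme whose recovered peripheral data is always sufficient to invoke the inductive hypothesis. Guaranteeing this uniformly across all trees is exactly where the difficulty concentrates, and why a full resolution remains out of reach.
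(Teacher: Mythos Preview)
The statement is Stanley's open conjecture, and the paper does not prove it either; it establishes partial results via Theorem~\ref{the:csf} and Corollaries~\ref{cor:dist} and~\ref{cor:newtrees}. Your first step is exactly the paper's approach: define the homomorphism $\varphi(p_i)=it$, show $\varphi(X_{(F,\ups)})=\p_{(F,\ups)}(t)$ by matching the two deletion-contraction recursions and the edgeless base case, and conclude that the chromatic symmetric function of a tree determines its combinatorial Laplacian spectrum. So any tree determined by its Laplacian spectrum is distinguished by $X_T$, which is precisely Corollary~\ref{cor:dist}, and your anticipated ``new cases in the spirit of Corollary~\ref{cor:newtrees}'' is exactly what the paper delivers.

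One genuine error: you claim $\varphi(X_{(G,\ups)})=\p_{(G,\ups)}(t)$ for \emph{every} integer vertex-weighted multigraph. This is false. Iterated contraction on a graph with a cycle eventually creates a loop; at a loop the two recursions diverge, since Theorem~\ref{csfdc} gives $X_{(G,\ups)}=0$ while Lemma~\ref{lem:simplify}(ii) gives $\p_{(G,\ups)}(t)=\p_{(G-e,\ups)}(t)$ and Theorem~\ref{the:delcon} does not apply. Concretely, for the unweighted triangle $K_3$ one computes $\varphi(X_{K_3})=t^3-6t^2+6t$ but $\p_{K_3}(t)=t(t-3)^2=t^3-6t^2+9t$. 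The paper accordingly restricts Theorem~\ref{the:csf} to forests, where contraction never produces a loop and the induction is clean. This does not affect your intended application to trees, but the general claim as stated is incorrect.

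Your second and third paragraphs correctly identify both the further information available in the full power-sum expansion and the fundamental obstruction: the Laplacian spectrum alone does not separate trees (the paper cites McKay's result that almost no tree is determined by its spectrum), so the specialisation $\varphi$ cannot by itself resolve the conjecture, and any peeling scheme must go beyond the invariants recoverable through $\varphi$.
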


It is known that any simple graph with the same chromatic symmetric function as a tree must also be a tree, since the chromatic symmetric function specialises to the \textit{chromatic polynomial $\chi_G(t)$}, which counts the number of proper $t$-colourings $\kappa: G\to [t]$, at $x_1=\dots= x_t = 1$ and $x_j=0$ for $j>t$, and the chromatic polynomial of a simple graph $G$ is equal to $t(t-1)^{n-1}$ if and only if $G$ is a tree on $n$ vertices. Some families of trees known to be distinguished by the chromatic symmetric function include all caterpillars \cite{LS} and all starlike trees \cite{MMW}.

We will now show that for integer vertex-weighted forests, the weighted Laplacian characteristic polynomial can be recovered from the chromatic symmetric function.

\begin{theorem}\label{the:csf}
Let $(F,\ups)$ be an integer vertex-weighted forest. Then
$$\p_{(F,\ups)}(t) = \varphi(X_{(F,\ups)}),$$
where $\varphi$ is the algebra homomorphism
\begin{align*}
\varphi:\Sym&\to \mathbb R[t]\\
p_\alpha&\mapsto \left(\prod_{i=1}^{\ell(\alpha)} \alpha_i\right)t^{\ell(\alpha)}.
\end{align*}
\end{theorem}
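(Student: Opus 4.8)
The plan is to induct on the number of edges of $F$, playing the deletion-contraction recurrence for $\p$ in Theorem~\ref{the:delcon} against the one for $X$ in Theorem~\ref{csfdc}. Since every edge of a forest has weight $1$, the recurrence of Theorem~\ref{the:delcon} specialises, for any edge $e$, to $\p_{(F,\ups)}(t) = \p_{(F-e,\ups)}(t) - \p_{(F/e,\ups/e)}(t)$, which has exactly the same shape as $X_{(F,\ups)} = X_{(F-e,\ups)} - X_{(F/e,\ups/e)}$. Because forests are closed under edge deletion and contraction, and $\ups/e$ is again integer-valued, both $(F-e,\ups)$ and $(F/e,\ups/e)$ are integer vertex-weighted forests with one fewer edge than $F$, so the inductive hypothesis will apply to them.

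First I would settle the base case, where $F$ has no edges and is therefore a disjoint union of isolated vertices $v_1,\dots,v_n$. Here $L_{(F,\eps)}$ is the zero matrix, so $\p_{(F,\ups)}(t) = \det(tW_{\ups}) = \left(\prod_{i=1}^n \ups(v_i)\right)t^n$. On the other side, every colouring of an edgeless graph is proper, so $X_{(F,\ups)} = \prod_{i=1}^n\left(\sum_j x_j^{\ups(v_i)}\right) = \prod_{i=1}^n p_{\ups(v_i)} = p_\alpha$, where $\alpha$ is the partition obtained by sorting $\ups(v_1),\dots,\ups(v_n)$ into weakly decreasing order. Applying $\varphi$ then yields $\varphi(p_\alpha) = \left(\prod_i \alpha_i\right)t^{\ell(\alpha)} = \left(\prod_i \ups(v_i)\right)t^n$, which matches $\p_{(F,\ups)}(t)$.

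For the inductive step, I would pick any edge $e$ of $F$; it is automatically a nonloop edge, so Theorem~\ref{the:delcon} with $\eps(e)=1$ gives $\p_{(F,\ups)}(t) = \p_{(F-e,\ups)}(t) - \p_{(F/e,\ups/e)}(t)$. Applying the inductive hypothesis to the two smaller forests and using that $\varphi$ is an algebra homomorphism, hence $\bR$-linear, we obtain
$$\p_{(F,\ups)}(t) = \varphi\!\left(X_{(F-e,\ups)}\right) - \varphi\!\left(X_{(F/e,\ups/e)}\right) = \varphi\!\left(X_{(F-e,\ups)} - X_{(F/e,\ups/e)}\right) = \varphi(X_{(F,\ups)}),$$
the last equality by Theorem~\ref{csfdc}. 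This closes the induction.

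I do not expect a genuine obstacle here; the proof is short once the two recurrences are lined up. The points that need care are: checking the base-case identification of the edgeless chromatic symmetric function with the product of power sums $p_{\ups(v_i)}$; confirming that the $\eps(e)=1$ specialisation of Theorem~\ref{the:delcon} has the same sign pattern as Theorem~\ref{csfdc}; and noting at the outset that $\varphi$ is well-defined, since prescribing $\varphi(p_i)=it$ and extending multiplicatively is consistent with the stated formula $\varphi(p_\alpha) = \left(\prod_i \alpha_i\right)t^{\ell(\alpha)}$ on the power-sum basis.
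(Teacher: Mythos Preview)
Your proof is correct and follows essentially the same approach as the paper: induction on the number of edges, with the edgeless base case computed directly via $X_{(F,\ups)}=p_\alpha$ and $\p_{(F,\ups)}(t)=\det(tW_\ups)$, and the inductive step obtained by matching the two deletion-contraction recurrences from Theorems~\ref{the:delcon} and~\ref{csfdc}.
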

\begin{proof}
We will proceed by induction on the number of edges of $F$. If $F$ has no edges and its vertex weights under $\ups$ are given in weakly decreasing order by the partition $\alpha$, then the chromatic symmetric function of $(F,\ups)$ is exactly the power sum symmetric function $p_\alpha$, e.g. by \cite[Section 3]{CS}. Moreover, since $F$ has no edges, the combinatorial Laplacian $L_F$ is just the zero matrix. Therefore,
$$P_{(F,\ups)}(t) = \det (t W_\ups)=\left(\prod_{v\in V}\ups(v)\right)t^{|F|} = \left(\prod_{i=1}^{\ell(\alpha)}\alpha_i\right)t^{\ell(\alpha)}=\varphi(X_{(F,\ups)}).$$

For the inductive step, assume $F$ has at least one edge $e$. It is not a loop, as $F$ is a forest. Since $(F-e,\ups)$ and $(F/e,\ups/e)$ are both integer vertex-weighted forests with strictly fewer edges, we have by the inductive hypothesis and the deletion-contraction relations in Theorem~\ref{the:delcon} (recalling that for vertex-weighted graphs we treat all edge weights as being $1$) and Theorem~\ref{csfdc} that
$$P_{(F,\ups)}(t) = P_{(F-e,\ups)}(t)-P_{(F/e,\ups/e)}(t) = \varphi(X_{(F-e,\ups)})-\varphi(X_{(F/e,\ups/e)}) = \varphi (X_{(F,\ups)}),$$
as claimed.
\end{proof}

An (unweighted) simple graph $G$ is said to be \textit{determined by its combinatorial Laplacian spectrum} if any other simple graph with the same combinatorial Laplacian spectrum must be isomorphic. From Theorem~\ref{the:csf}, we can deduce the following.

\begin{corollary}\label{cor:dist}
If an unweighted tree $T$ is determined by its combinatorial Laplacian spectrum, then it is also distinguished by the chromatic symmetric function.
\end{corollary}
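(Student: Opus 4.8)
The plan is to chain together two facts: Theorem~\ref{the:csf}, which says that for integer vertex-weighted forests the weighted Laplacian characteristic polynomial $\p_{(F,\ups)}(t)$ is obtained from $X_{(F,\ups)}$ by the fixed algebra homomorphism $\varphi$, specialized to the unweighted case $\ups\equiv 1$; and the observation that for an unweighted graph, $\p_{(G,\#_V)}(t) = \det(tI - L_G)$ is exactly the combinatorial Laplacian characteristic polynomial, so it encodes the combinatorial Laplacian spectrum. Suppose $T$ is an unweighted tree that is determined by its combinatorial Laplacian spectrum, and let $G$ be any simple graph with $X_G = X_T$. The goal is to conclude $G \cong T$.

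First I would argue that $G$ must itself be a tree. As noted in the text just before Conjecture~\ref{conj:trees}, the chromatic symmetric function specializes to the chromatic polynomial $\chi_G(t)$ (set $x_1 = \cdots = x_t = 1$, $x_j = 0$ for $j > t$), and a simple graph $G$ on $n$ vertices has $\chi_G(t) = t(t-1)^{n-1}$ if and only if $G$ is a tree on $n$ vertices; since $X_G = X_T$ forces $\chi_G = \chi_T = t(t-1)^{n-1}$, where $n = |V(T)|$, we get that $G$ is a tree on $n$ vertices. In particular $G$, like $T$, is a forest, so Theorem~\ref{the:csf} applies to both (viewing each as an integer vertex-weighted forest with all weights $1$).

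Next, applying Theorem~\ref{the:csf} to both $T$ and $G$ with the trivial weighting gives $\p_T(t) = \varphi(X_T) = \varphi(X_G) = \p_G(t)$. Since for an unweighted graph $\p_G(t) = \det(tI - L_G)$ by the definition of $\p$, this says $\det(tI - L_T) = \det(tI - L_G)$, i.e.\ $T$ and $G$ have the same combinatorial Laplacian characteristic polynomial, hence the same combinatorial Laplacian spectrum. By the hypothesis that $T$ is determined by its combinatorial Laplacian spectrum, we conclude $G \cong T$. Since $G$ was an arbitrary simple graph with $X_G = X_T$, this shows $T$ is distinguished by the chromatic symmetric function.

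There is no real obstacle here — the corollary is essentially a direct composition of the preceding theorem with the chromatic-polynomial specialization argument. The only point requiring a moment of care is the first step: one must confirm that the hypothesis $X_G = X_T$ indeed forces $G$ to be a forest (and hence a graph to which Theorem~\ref{the:csf} applies), rather than merely assuming it; this is handled by the chromatic polynomial argument recalled above. Everything else is bookkeeping about the identification $\p_{(G,\#_V)}(t) = \det(tI - L_G)$ for unweighted $G$.
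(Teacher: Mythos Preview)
Your proposal is correct and follows essentially the same approach as the paper's proof: both argue that any simple graph $G$ with $X_G = X_T$ must be a tree via the chromatic polynomial specialization, then apply Theorem~\ref{the:csf} to deduce $\p_T(t) = \varphi(X_T) = \varphi(X_G) = \p_G(t)$, and conclude $G \cong T$ from the hypothesis on the Laplacian spectrum.
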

\begin{proof}
Suppose $T'$ is another unweighted simple graph with the same chromatic symmetric function $X_T=X_{T'}$. Then $T'$ must be a tree. By Theorem~\ref{the:csf},
$$\p_{T}(t) = \varphi(X_T)=\varphi(X_{T'})=\p_{T'}(t),$$
so $T$ and $T'$ have the same combinatorial Laplacian spectrum. Since $T$ is determined by its combinatorial Laplacian spectrum, any such $T'$ must be isomorphic to $T$, and so $T$ is distinguished by the chromatic symmetric function.
\end{proof}

There has been a large body of work on trees determined by their combinatorial Laplacian spectra, partly due to the interest generated by the survey paper of van Dam and Haemers \cite{vDH}, which asked more generally, which graphs are determined by their spectra for various matrices associated with a graph. In their concluding remarks \cite[Section 8]{vDH}, they noted that resolving the question for general graphs seemed out of reach, and proposed the more tractable problem of classifying which trees are determined by their spectra.

Various families of caterpillars \cite{ACSLUR, Boulet, BZL, LiThesis, LL, SH, Stanic}, including paths \cite{vDH}, and all starlike trees \cite{OT} are known to be determined by their combinatorial Laplacian spectra. Some families of trees which are determined by their combinatorial Laplacian spectra but not previously known to be distinguished by the chromatic symmetric function are described next.

\begin{corollary}\label{cor:newtrees}
The following unweighted trees are distinguished by the chromatic symmetric function:
\begin{enumerate}
    \item[(i)] Trees $B^a_{n}$ with a central vertex adjacent to $a$ vertices, each of which is joined by pendant edges to $n-1$ other vertices, for $n=3$ with $a\ge 1$, as well as for $n\ge a^2\ge 1$.
    \item[(ii)] Trees $h_{m,n}^a$ obtained by joining by edges a vertex to an endpoint of each of three paths on $a$, $m$ and $n$ vertices, respectively, and adding a pendant edge to each vertex of the path on $a$ vertices.
    \item[(iii)] Trees $H_{m,n}^a$ obtained by adding a pendant edge to the vertex on the path on $a$ vertices furthest from the initial vertex in $h_{m,n}^a$, for $a,m,n\ge 1$.
    \item[(iv)] Trees $i_n$ obtained from a path on $n$ vertices by joining an endpoint of a unique path on $2$ vertices to each vertex of the path on $n$ vertices, as well as to both endpoints of the path on $n$ vertices (with repetition), for $n\ge 1$.
    \item[(v)] Trees $M_{m,n}^{a,b}$ obtained by joining by edges one endpoint of an edge to an endpoint of each of two paths on $a$ and $m$ vertices, and the other endpoint of the edge to an endpoint of each of two paths on $b$ and $n$ vertices, for $a\ge m\ge1$, $b\ge n\ge1$, $a\ge b$ and $(a,b,m,n)\neq(m+2n+1,m+n+1,m,n),(b+2n+2,b,b+n,n)$. 
    \item[(vi)] Trees $T_{\ell,m,n}^a$ obtained by joining by edges a vertex to an endpoint of each of three paths on $\ell$, $m$, and $n$ vertices, and adding two pendant edges to the opposite endpoints of the first $a$ paths, for $\ell,m,n\ge 1$ and $1\le a\le 3$.
    \item[(vii)] Trees $y_n$ obtained by adding a pendant edge to each vertex of degree $2$ in $i_n$, for $n\ge 1$.
    \item[(viii)] Trees $Y_n$ obtained by adding a pendant edge to each vertex of the initial path in $y_n$, for $n\ge 1$.
    \item[(iv)] Trees $Z_n$ obtained by adding a pendant edge to each vertex of degree $3$ in $Y_n$, for $n\ge 1$.
\end{enumerate}
\end{corollary}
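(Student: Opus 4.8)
The strategy is to invoke Corollary~\ref{cor:dist}, which says that an unweighted tree determined by its combinatorial Laplacian spectrum is distinguished by the chromatic symmetric function. Since any unweighted simple graph with the same chromatic symmetric function as a tree is itself a tree, the claim for each family (i)--(ix) reduces to a single assertion: every tree in that family is determined by its combinatorial Laplacian spectrum. Once that is known for a tree $T$ in one of these families, Corollary~\ref{cor:dist} (via Theorem~\ref{the:csf}) immediately yields that $T$ is distinguished by the chromatic symmetric function, with no further computation. So the whole proof is organized as: state the reduction, then go family by family.

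Concretely, I would proceed as follows. The trees in families (ii), (iii), (vi), (vii), (viii), (ix), together with the $i_n$ of (iv), are caterpillars or small modifications obtained by attaching pendant edges, and the needed spectral-determination statements should be assembled from the existing work on caterpillars determined by their Laplacian spectra, namely \cite{ACSLUR, Boulet, BZL, LiThesis, LL, SH, Stanic} (with \cite{vDH} covering paths). The trees $B^a_n$ of (i) in the ranges $n=3$ with $a\ge 1$ and $n\ge a^2\ge 1$, and the trees $M^{a,b}_{m,n}$ of (v) with the listed tuples excluded, are starlike or "double-broom"-type trees, for which one cites \cite{OT} and the associated literature; the parameter restrictions in (i) and (v) are presumably exactly the ranges in which the cited determination results hold, with Laplacian-cospectral mates appearing outside them (which is why the exceptional tuples $(m+2n+1,m+n+1,m,n)$ and $(b+2n+2,b,b+n,n)$ are removed in (v)). For each family the task is then simply to quote the relevant theorem and check that its hypotheses match the stated parameter set, and to conclude by Corollary~\ref{cor:dist}.

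The main obstacle is therefore bibliographic and bookkeeping-heavy rather than conceptual: one must correctly pair each of the nine families with a published determination-by-spectrum result and verify that the parameter conditions in the corollary are precisely those under which it applies, being especially careful with the boundary ranges in (i) and the excluded tuples in (v). If some family — plausibly $y_n$, $Y_n$, or $Z_n$ in (vii)--(ix), each built by iteratively attaching pendant edges — is not available in the literature in exactly the generality needed, then one would additionally have to give a direct proof that those trees are determined by their combinatorial Laplacian spectra, for instance by computing enough Laplacian characteristic-polynomial coefficients (equivalently, via Theorem~\ref{the:mforest}, enough spanning-forest counts $c_k$) to force any cospectral tree to be isomorphic. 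That verification, not the invocation of Corollary~\ref{cor:dist}, is where the genuine work would lie.
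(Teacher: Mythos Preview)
Your overall approach is exactly the paper's: apply Corollary~\ref{cor:dist} to reduce each case to a spectral-determination statement, then cite the literature family by family. The paper's proof is two sentences and contains no additional argument.

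Your bibliographic matching, however, is off in places. The trees $B^a_n$ are not starlike (for $a\ge 2$ there are $a$ vertices of degree $n\ge 3$ in addition to the central vertex), so \cite{OT} does not apply; the paper cites \cite{AAS} for (i). Similarly, the $M^{a,b}_{m,n}$ of (v) are the $\Pi$-shape trees of \cite{WHHL}, not a consequence of \cite{OT}. Several of the families you group as caterpillars are not caterpillars either (e.g.\ $h^a_{m,n}$ generally has a spider as its spine after deleting leaves). The actual references used are \cite{AAS}, \cite{LiThesis}, \cite{LiThesis}, \cite{LiThesis}, \cite{WHHL}, \cite{WH}, \cite{LiThesis}, \cite{ZB}, \cite{LiThesis} for (i)--(ix) respectively, and no separate spanning-forest computation is needed for (vii)--(ix). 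Once you correct these attributions, your proposal is complete.
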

\begin{proof}
The given families of trees are shown to be determined by their combinatorial Laplacian spectra in \cite[Theorem 11]{AAS}, \cite[Theorem 3.2.4]{LiThesis}, \cite[Theorem 3.2.1]{LiThesis}, \cite[Theorem 3.3.2]{LiThesis}, \cite[Theorem 3.1]{WHHL}, \cite[Theorem 3.2]{WH}, \cite[Theorem 3.4.2]{LiThesis}, \cite[Theorem 2.1]{ZB} and \cite[Theorem 3.4.4]{LiThesis}, respectively. By Corollary~\ref{cor:dist}, they are distinguished by the chromatic symmetric function.
\end{proof}

\begin{example}
By Corollary~\ref{cor:newtrees}, the following trees are all distinguished by the chromatic symmetric function.
\begin{center}
\begin{tikzpicture}
\coordinate (1) at (0,0);
\coordinate (2) at (-.6,-.5);
\coordinate (3) at (.6,-.5);
\coordinate (4) at (.6,-1);
\coordinate (6) at (.95,-.85);
\coordinate (7) at (.25,-.85);
\coordinate (8) at (-.25,-.85);
\coordinate (9) at (-.95,-.85);
\coordinate (5) at (-.6,-1);
\filldraw [black] (1) circle (2pt);
\filldraw [black] (2) circle (2pt);
\filldraw [black] (3) circle (2pt);
\filldraw [black] (4) circle (2pt);
\filldraw [black] (5) circle (2pt);
\filldraw [black] (6) circle (2pt);
\filldraw [black] (7) circle (2pt);
\filldraw [black] (8) circle (2pt);
\filldraw [black] (9) circle (2pt);
\draw[-] (1) to (2);
\draw[-] (1) to (3);
\draw[-] (3)--(4);
\draw[-] (7)--(3);
\draw[-] (6)--(3);
\draw[-] (5)--(2);
\draw[-] (8)--(2);
\draw[-] (9)--(2);

\node[] at (0,-1.45) { $B^2_4$};
\end{tikzpicture}\quad\quad
\begin{tikzpicture}
\coordinate (1) at (0,0);
\coordinate (2) at (.5,0);
\coordinate (3) at (1,-0);
\coordinate (4) at (1.5,0);
\coordinate (5) at (2,0);
\coordinate (6) at (2.5,0);
\coordinate (7) at (3,0);
\coordinate (8) at (1.5,.5);
\coordinate (9) at (1.5,1);
\coordinate (10) at (2,.5);
\coordinate (11) at (2,1);
\filldraw [black] (1) circle (2pt);
\filldraw [black] (2) circle (2pt);
\filldraw [black] (3) circle (2pt);
\filldraw [black] (4) circle (2pt);
\filldraw [black] (5) circle (2pt);
\filldraw [black] (6) circle (2pt);
\filldraw [black] (7) circle (2pt);
\filldraw [black] (8) circle (2pt);
\filldraw [black] (9) circle (2pt);
\filldraw [black] (10) circle (2pt);
\filldraw [black] (11) circle (2pt);
\draw[-] (1) to (2);
\draw[-] (3) to (2);
\draw[-] (3) to (4);
\draw[-] (5) to (4);
\draw[-] (5) to (6);
\draw[-] (7) to (6);
\draw[-] (4) to (8);
\draw[-] (8) to (9);
\draw[-] (10) to (8);
\draw[-] (11) to (9);
\node[] at (1.5,-.4) { $h_{3,3}^2$};
\end{tikzpicture}\quad\quad
\begin{tikzpicture}
\coordinate (1) at (0,0);
\coordinate (2) at (.5,0);
\coordinate (3) at (1,-0);
\coordinate (4) at (1.5,0);
\coordinate (5) at (2,0);
\coordinate (6) at (2.5,0);
\coordinate (7) at (3,0);
\coordinate (8) at (1.5,.5);
\coordinate (9) at (1.5,1);
\coordinate (12) at (1.5,1.5);
\coordinate (10) at (2,.5);
\coordinate (11) at (2,1);
\filldraw [black] (1) circle (2pt);
\filldraw [black] (2) circle (2pt);
\filldraw [black] (3) circle (2pt);
\filldraw [black] (4) circle (2pt);
\filldraw [black] (5) circle (2pt);
\filldraw [black] (6) circle (2pt);
\filldraw [black] (7) circle (2pt);
\filldraw [black] (8) circle (2pt);
\filldraw [black] (9) circle (2pt);
\filldraw [black] (10) circle (2pt);
\filldraw [black] (11) circle (2pt);
\filldraw [black] (12) circle (2pt);
\draw[-] (1) to (2);
\draw[-] (3) to (2);
\draw[-] (3) to (4);
\draw[-] (5) to (4);
\draw[-] (5) to (6);
\draw[-] (7) to (6);
\draw[-] (4) to (8);
\draw[-] (8) to (9);
\draw[-] (12) to (9);
\draw[-] (10) to (8);
\draw[-] (11) to (9);
\node[] at (1.5,-.4) { $H_{3,3}^2$};
\end{tikzpicture}

\begin{tikzpicture}
\coordinate (1) at (0,0);
\coordinate (2) at (.5,0);
\coordinate (3) at (1,-0);
\coordinate (4) at (1.5,0);
\coordinate (5) at (2,0);
\coordinate (6) at (2.5,0);
\coordinate (7) at (3,0);
\coordinate (8) at (1.5,-.5);
\coordinate (9) at (1.5,-1);
\coordinate (10) at (2,-.5);
\coordinate (11) at (2,-1);
\coordinate (12) at (1,-.5);
\coordinate (13) at (1,-1);
\filldraw [black] (1) circle (2pt);
\filldraw [black] (2) circle (2pt);
\filldraw [black] (3) circle (2pt);
\filldraw [black] (4) circle (2pt);
\filldraw [black] (5) circle (2pt);
\filldraw [black] (6) circle (2pt);
\filldraw [black] (7) circle (2pt);
\filldraw [black] (8) circle (2pt);
\filldraw [black] (9) circle (2pt);
\filldraw [black] (10) circle (2pt);
\filldraw [black] (11) circle (2pt);
\filldraw [black] (12) circle (2pt);
\filldraw [black] (13) circle (2pt);
\draw[-] (1) to (2);
\draw[-] (3) to (2);
\draw[-] (3) to (4);
\draw[-] (5) to (4);
\draw[-] (5) to (6);
\draw[-] (7) to (6);
\draw[-] (4) to (8);
\draw[-] (8) to (9);
\draw[-] (5) to (10);
\draw[-] (10) to (11);
\draw[-] (3) to (12);
\draw[-] (13) to (12);
\node[] at (1.5,-1.4) { $i_3$};
\end{tikzpicture}\quad\quad
\begin{tikzpicture}
\coordinate (1) at (0,0);
\coordinate (2) at (.5,0);
\coordinate (3) at (1,-0);
\coordinate (4) at (1.5,0);
\coordinate (5) at (2,0);
\coordinate (6) at (2.5,0);
\coordinate (8) at (1.5,-.5);
\coordinate (12) at (1,-.5);
\coordinate (13) at (1,-1);
\filldraw [black] (1) circle (2pt);
\filldraw [black] (2) circle (2pt);
\filldraw [black] (3) circle (2pt);
\filldraw [black] (4) circle (2pt);
\filldraw [black] (5) circle (2pt);
\filldraw [black] (6) circle (2pt);
\filldraw [black] (8) circle (2pt);
\filldraw [black] (12) circle (2pt);
\filldraw [black] (13) circle (2pt);
\draw[-] (1) to (2);
\draw[-] (3) to (2);
\draw[-] (3) to (4);
\draw[-] (5) to (4);
\draw[-] (5) to (6);
\draw[-] (4) to (8);
\draw[-] (3) to (12);
\draw[-] (13) to (12);
\node[] at (1.25,-1.4) { $M_{2,1}^{2,2}$};
\end{tikzpicture}\quad\quad
\begin{tikzpicture}
\coordinate (1) at (0,0);
\coordinate (2) at (.5,0);
\coordinate (3) at (1,-0);
\coordinate (4) at (1.5,0);
\coordinate (5) at (2,0);
\coordinate (6) at (2.5,0);
\coordinate (8) at (3,0);
\coordinate (7) at (-.35,-.35);
\coordinate (9) at (-.35,.35);
\coordinate (10) at (1.35,-1.35);
\coordinate (11) at (.65,-1.35);
\coordinate (12) at (1,-.5);
\coordinate (13) at (1,-1);
\filldraw [black] (1) circle (2pt);
\filldraw [black] (2) circle (2pt);
\filldraw [black] (3) circle (2pt);
\filldraw [black] (4) circle (2pt);
\filldraw [black] (5) circle (2pt);
\filldraw [black] (6) circle (2pt);
\filldraw [black] (8) circle (2pt);
\filldraw [black] (7) circle (2pt);
\filldraw [black] (9) circle (2pt);
\filldraw [black] (12) circle (2pt);
\filldraw [black] (13) circle (2pt);
\filldraw [black] (10) circle (2pt);
\filldraw [black] (11) circle (2pt);
\draw[-] (1) to (2);
\draw[-] (1) to (7);
\draw[-] (1) to (9);
\draw[-] (3) to (2);
\draw[-] (3) to (4);
\draw[-] (5) to (4);
\draw[-] (5) to (6);
\draw[-] (4) to (8);
\draw[-] (3) to (12);
\draw[-] (13) to (12);
\draw[-] (13) to (10);
\draw[-] (13) to (11);
\node[] at (1.325,-1.75) { $T_{2,2,4}^2$};
\end{tikzpicture}

\begin{tikzpicture}
\coordinate (1) at (-.35,.35);
\coordinate (2) at (0,0);
\coordinate (14) at (-.35,-.35);
\coordinate (3) at (.5,-0);
\coordinate (4) at (1.5,0);
\coordinate (5) at (2.5,0);
\coordinate (6) at (3,0);
\coordinate (7) at (3.35,.35);
\coordinate (18) at (3.35,-.35);
\coordinate (8) at (1.5,-.5);
\coordinate (16) at (2,-.5);
\coordinate (17) at (3,-.5);
\coordinate (9) at (1.5,-1);
\coordinate (10) at (2.5,-.5);
\coordinate (11) at (2.5,-1);
\coordinate (12) at (.5,-.5);
\coordinate (15) at (1,-.5);
\coordinate (13) at (.5,-1);
\filldraw [black] (1) circle (2pt);
\filldraw [black] (2) circle (2pt);
\filldraw [black] (3) circle (2pt);
\filldraw [black] (4) circle (2pt);
\filldraw [black] (5) circle (2pt);
\filldraw [black] (6) circle (2pt);
\filldraw [black] (7) circle (2pt);
\filldraw [black] (8) circle (2pt);
\filldraw [black] (9) circle (2pt);
\filldraw [black] (10) circle (2pt);
\filldraw [black] (11) circle (2pt);
\filldraw [black] (12) circle (2pt);
\filldraw [black] (13) circle (2pt);
\filldraw [black] (14) circle (2pt);
\filldraw [black] (15) circle (2pt);
\filldraw [black] (16) circle (2pt);
\filldraw [black] (17) circle (2pt);
\filldraw [black] (18) circle (2pt);
\draw[-] (1) to (2);
\draw[-] (14) to (2);
\draw[-] (3) to (2);
\draw[-] (3) to (4);
\draw[-] (5) to (4);
\draw[-] (5) to (6);
\draw[-] (7) to (6);
\draw[-] (4) to (8);
\draw[-] (8) to (9);
\draw[-] (5) to (10);
\draw[-] (10) to (11);
\draw[-] (10) to (17);
\draw[-] (6) to (18);
\draw[-] (3) to (12);
\draw[-] (13) to (12);
\draw[-] (15) to (12);
\draw[-] (16) to (8);
\node[] at (1.5,-1.4) { $y_3$};
\end{tikzpicture}\quad\quad
\begin{tikzpicture}
\coordinate (1) at (-.35,.35);
\coordinate (2) at (0,0);
\coordinate (14) at (-.35,-.35);
\coordinate (3) at (.5,-0);
\coordinate (4) at (1.5,0);
\coordinate (5) at (2.5,0);
\coordinate (19) at (.5,0.5);
\coordinate (20) at (1.5,0.5);
\coordinate (21) at (2.5,0.5);
\coordinate (6) at (3,0);
\coordinate (7) at (3.35,.35);
\coordinate (18) at (3.35,-.35);
\coordinate (8) at (1.5,-.5);
\coordinate (16) at (2,-.5);
\coordinate (17) at (3,-.5);
\coordinate (9) at (1.5,-1);
\coordinate (10) at (2.5,-.5);
\coordinate (11) at (2.5,-1);
\coordinate (12) at (.5,-.5);
\coordinate (15) at (1,-.5);
\coordinate (13) at (.5,-1);
\filldraw [black] (1) circle (2pt);
\filldraw [black] (2) circle (2pt);
\filldraw [black] (3) circle (2pt);
\filldraw [black] (4) circle (2pt);
\filldraw [black] (5) circle (2pt);
\filldraw [black] (6) circle (2pt);
\filldraw [black] (7) circle (2pt);
\filldraw [black] (8) circle (2pt);
\filldraw [black] (9) circle (2pt);
\filldraw [black] (10) circle (2pt);
\filldraw [black] (11) circle (2pt);
\filldraw [black] (12) circle (2pt);
\filldraw [black] (13) circle (2pt);
\filldraw [black] (14) circle (2pt);
\filldraw [black] (15) circle (2pt);
\filldraw [black] (16) circle (2pt);
\filldraw [black] (17) circle (2pt);
\filldraw [black] (18) circle (2pt);
\filldraw [black] (19) circle (2pt);
\filldraw [black] (20) circle (2pt);
\filldraw [black] (21) circle (2pt);
\draw[-] (1) to (2);
\draw[-] (14) to (2);
\draw[-] (3) to (2);
\draw[-] (3) to (19);
\draw[-] (3) to (4);
\draw[-] (20) to (4);
\draw[-] (5) to (4);
\draw[-] (5) to (21);
\draw[-] (5) to (6);
\draw[-] (7) to (6);
\draw[-] (4) to (8);
\draw[-] (8) to (9);
\draw[-] (5) to (10);
\draw[-] (10) to (11);
\draw[-] (10) to (17);
\draw[-] (6) to (18);
\draw[-] (3) to (12);
\draw[-] (13) to (12);
\draw[-] (15) to (12);
\draw[-] (16) to (8);
\node[] at (1.5,-1.4) { $Y_3$};
\end{tikzpicture}\quad\quad
\begin{tikzpicture}
\coordinate (1) at (-.35,.35);
\coordinate (2) at (0,0);
\coordinate (25) at (-.5,0);
\coordinate (14) at (-.35,-.35);
\coordinate (3) at (.5,-0);
\coordinate (4) at (1.5,0);
\coordinate (5) at (2.5,0);
\coordinate (19) at (.5,0.5);
\coordinate (20) at (1.5,0.5);
\coordinate (21) at (2.5,0.5);
\coordinate (6) at (3,0);
\coordinate (26) at (3.5,0);
\coordinate (7) at (3.35,.35);
\coordinate (18) at (3.35,-.35);
\coordinate (8) at (1.5,-.5);
\coordinate (15) at (.85,-.85);
\coordinate (16) at (1.85,-.85);
\coordinate (17) at (2.85,-.85);
\coordinate (22) at (.15,-.85);
\coordinate (23) at (1.15,-.85);
\coordinate (24) at (2.15,-.85);
\coordinate (9) at (1.5,-1);
\coordinate (10) at (2.5,-.5);
\coordinate (11) at (2.5,-1);
\coordinate (12) at (.5,-.5);
\coordinate (13) at (.5,-1);
\filldraw [black] (1) circle (2pt);
\filldraw [black] (2) circle (2pt);
\filldraw [black] (3) circle (2pt);
\filldraw [black] (4) circle (2pt);
\filldraw [black] (5) circle (2pt);
\filldraw [black] (6) circle (2pt);
\filldraw [black] (7) circle (2pt);
\filldraw [black] (8) circle (2pt);
\filldraw [black] (9) circle (2pt);
\filldraw [black] (10) circle (2pt);
\filldraw [black] (11) circle (2pt);
\filldraw [black] (12) circle (2pt);
\filldraw [black] (13) circle (2pt);
\filldraw [black] (14) circle (2pt);
\filldraw [black] (15) circle (2pt);
\filldraw [black] (16) circle (2pt);
\filldraw [black] (17) circle (2pt);
\filldraw [black] (18) circle (2pt);
\filldraw [black] (19) circle (2pt);
\filldraw [black] (20) circle (2pt);
\filldraw [black] (21) circle (2pt);
\filldraw [black] (22) circle (2pt);
\filldraw [black] (23) circle (2pt);
\filldraw [black] (24) circle (2pt);
\filldraw [black] (25) circle (2pt);
\filldraw [black] (26) circle (2pt);
\draw[-] (1) to (2);
\draw[-] (14) to (2);
\draw[-] (3) to (2);
\draw[-] (3) to (19);
\draw[-] (3) to (4);
\draw[-] (20) to (4);
\draw[-] (5) to (4);
\draw[-] (5) to (21);
\draw[-] (5) to (6);
\draw[-] (7) to (6);
\draw[-] (4) to (8);
\draw[-] (8) to (9);
\draw[-] (5) to (10);
\draw[-] (10) to (11);
\draw[-] (10) to (17);
\draw[-] (10) to (24);
\draw[-] (6) to (18);
\draw[-] (3) to (12);
\draw[-] (13) to (12);
\draw[-] (15) to (12);
\draw[-] (22) to (12);
\draw[-] (16) to (8);
\draw[-] (23) to (8);
\draw[-] (26) to (6);
\draw[-] (25) to (2);
\node[] at (1.5,-1.4) { $Z_3$};
\end{tikzpicture}
\end{center}
\end{example}

Unfortunately, only studying the combinatorial Laplacian spectra of trees is not sufficient to resolve Conjecture~\ref{conj:trees}, as there exist infinitely many pairs of nonisomorphic trees with the same combinatorial Laplacian spectrum, including pairs consisting of a caterpillar and a tree that is not a caterpillar \cite{Stanic}. An even stronger negative result due to McKay in \cite[Corollary 4.4]{McKay} states that the proportion of trees on $n$ vertices that are determined by their combinatorial Laplacian spectra goes to zero as $n$ goes to infinity. (Actually, McKay's result stated that the proportion goes to zero for trees determined by the adjacency spectra of their line graphs, which is equivalent, since for any tree $T$ the adjacency matrix of its line graph can be shown to equal to $K_T-2I$, where $K_T$ is the edge Laplacian of $T$ in a certain basis.)

However, Theorem~\ref{the:csf} does give insight into what information is captured by the chromatic symmetric function of a tree. It may also be worthwhile to study constructions of nonisomorphic trees with the same combinatorial Laplacian spectrum, either to search for counterexamples of the conjecture, or to understand the properties of possible counterexamples. Some relevant constructions can be found in \cite[Theorem 4.2]{McKay} and \cite[Theorem 3.1]{WHHL}.

\section{Acknowledgments}\label{sec:ack}  The authors would like to thank Will Evans and Joel Friedman for helpful conversations, Mike Huang for helping translate Chinese-language articles and Fei Wen for sending the authors a copy of \cite{WH}.

\bibliographystyle{plain}

\begin{thebibliography}{10}

\bibitem{AAS} G.~Aalipour, S.~Akbari and N.~Shajari, Laplacian spectral characterization of two families of trees, \emph{Linear Multilinear Algebra} 62 (7), 965-977 (2014).

\bibitem{ACSZ} J.~Aliste-Prieto, L.~Crew, S.~Spirkl and J.~Zamora, A vertex-weighted {T}utte symmetric function, and constructing graphs with equal chromatic symmetric function, \emph{Electron. J. Comb.} 28 (2), P2.1 33pp (2021).

\bibitem{ACSLUR} M.~Ajmal, X.-W.~Cao, M.~Salman, J.-B.~Liu and M.~Ur Rehman, A special class of triple starlike trees characterized by Laplacian spectrum, \emph{AIMS Math.} 6 (5), 4394--4403 (2021).

\bibitem{Bondy} J. A.~Bondy, A graph reconstructor's manual, Surveys in Combinatorics, 1991 (A. D.~Keedwell, ed.), \emph{Cambridge University Press}, 221--252 (1991).

\bibitem{Bollobas} B.~Bollob\'as, Modern Graph Theory, \emph{Springer Science \& Business Media}, (1998).

\bibitem{Boulet} R.~Boulet, The centipede is determined by its Laplacian spectrum, \emph{C. R. Math. Acad. Sci. Paris} 346 (13)--(14), 711--716 (2008).

\bibitem{BHG} G.~Bravo-Hermsdorff and L. M.~Gunderson, A unifying framework for spectrum-preserving graph sparsification and coarsening, \emph{Adv. Neural Inf. Process. Syst.} 32, (2019).

\bibitem{BSST} R. L.~Brooks, C. A. B.~Smith, A. H.~Stone and W. T.~Tutte, The dissection of rectangles into squares, \emph{Duke Math. J.} 7 (1), 312--340 (1940).

\bibitem{Brylawski} T.~Brylawski, Intersection theory for graphs, \emph{J. Combin. Theory Ser. B} 30 (2), 233--246 (1981).

\bibitem{BZL} C.-J.~Bu, J.~Zhou, H.-B.~Li, Spectral determination of some chemical graphs, \emph{Filomat} 26 (6), 1123--1131 (2012).

\bibitem{Butler} S.~Butler, Interlacing for weighted graphs using the normalized Laplacian, \emph{Electron. J. Linear Algebra} 16, 90--98 (2007).

\bibitem{CWW} C.~Cai, D.~Wang and Y.~Wang, Graph coarsening with neural networks, \emph{Int. Conf. Learn. Represent.}, (2021).

\bibitem{Chaiken} S.~Chaiken, A combinatorial proof of the all minors matrix tree theorem, \emph{SIAM J. Alg. Disc. Meth.} 3 (3), 319--329 (1982).

\bibitem{CDHLPS} G.~Chen, G.~Davis, F.~Hall, Z.~Li, K.~Patel and M.~Stewart, An interlacing result on normalized Laplacians, \emph{SIAM J. Discrete Math.} 18 (2), 353--361 (2004).

\bibitem{Cheeger} J.~Cheeger, A lower bound for the smallest eigenvalue of the Laplacian, Problems in Analysis (R. C. Gunning, ed.), \emph{Princeton University Press}, 195--199 (1970).

\bibitem{CDL} S. V.~Chmutov, S. V.~Duzhin and S. K.~Lando, Vassiliev knot invariants. III. Forest algebra and weighted graphs,
\emph{Adv. in Soviet Math.} 21, 135--146 (1994).

\bibitem{ChungBook} F. R. K.~Chung, Spectral Graph Theory, \textit{American Mathematical Society}, edition 2, (1997).

\bibitem{CL} F. R. K.~Chung and R. P.~Langlands, A combinatorial Laplacian with vertex weights, \emph{J. Combin. Theory Ser. A} 75 (2), 316--327 (1996).

\bibitem{CGP} G.~Coutinho, R.~Grandsire and C.~Passos,
Colouring the normalized Laplacian,
\emph{Electron. Notes Theor. Comput. Sci.} 346, 345--354, (2019).

\bibitem{CS} L.~Crew and S.~Spirkl, A deletion-contraction relation for the chromatic symmetric function, \emph{European J. Combin.} 89, 103--143 (2020).

\bibitem{vDH} E. R.~van Dam and W. H.~Haemers, Which graphs are determined by their spectrum?, \emph{Linear Algebra Appl.} 373, 241--272 (2003).

\bibitem{Dodziuk} J.~Dodziuk, Difference equations, isoperimetric inequality and transience of certain
random walks, \emph{Trans. Amer. Math. Soc.} 284 (2), 787--794 (1984).

\bibitem{FT} J.~Friedman and J.-P.~Tillich, Calculus on graphs, {\tt arXiv:cs/0408028v2}

\bibitem{GMS} R.~Grone, R.~Merris and V. S.~Sunder, The Laplacian spectrum of a graph, \emph{SIAM J. Matrix Anal. Appl.} 11 (2), 218--238 (1990).

\bibitem{GN} C. D.~Godsil and M. W.~Newman, Eigenvalue bounds for independent sets, \emph{J. Combin. Theory Ser. B} 98 (4), 721--734 (2008). 

\bibitem{HJ}
D.~Horak and J.~Jost, Spectra of combinatorial Laplace operators on simplicial complexes, \emph{Adv. Math.} 244, 303--336 (2013).

\bibitem{Kir} G.~Kirchhoff, Uber die Aufl\"osung der Gleichungen, auf welche man bei der Untersuchung der linearen Verteilung galvanischer Str\"ome gef\"uhrt wird, \emph{Ann. Phys. Chem.} 148 (12), 497--508 (1847).

\bibitem{LiThesis} J.-F.~Li, Some trees determined by their Laplacian spectrum, master's thesis, Hunan Normal University (2008).

\bibitem{Lovasz} L.~Lov\'asz, Combinatorial Problems and Exercises, \emph{North-Holland}, edition 2, (1993).

\bibitem{LL} P.-L.~Lu and X.-G.~Liu, Laplacian spectral characterization of some double starlike trees, \emph{Journal of Harbin Engineering University} 37 (2), 242--247 (2016).

\bibitem{LS}
M.~Loebl, J.-S.~Sereni, Isomorphism of weighted trees and {S}tanley's isomorphism conjecture for caterpillars, \emph{Ann. Inst. Henri Poincar\'{e} Comb. Phys. Interact.} 6 (3),  357--384 (2019).

\bibitem{MMW} J.~Martin, M.~Morin and J.~Wagner, On distinguishing trees by their chromatic symmetric
functions, \emph{J. Combin. Theory Ser. A} 115 (2), 237--253 (2008).

\bibitem{McKay} B. D.~McKay, On the spectral characterisation of trees, \emph{Ars Combin.} 3, 219--232 (1977).

\bibitem{Mohar} B.~Mohar, Eigenvalues, diameter, and mean distance in graphs, \emph{Graphs Combin.} 7, 53--64 (1991).

\bibitem{NW} S. D.~Noble and D. J. A.~Welsh, A weighted graph polynomial from chromatic invariants of knots, \emph{Ann. Inst. Fourier (Grenoble)} 49 (3), 1057--1087 (1999).

\bibitem{OT} G. R.~Omidi and K.~Tajbakhsh, Starlike trees are determined by their Laplacian spectrum, \emph{Linear Algebra Appl.} 422 (2)--(3), 654--658 (2007).

\bibitem{Sarmiento} I.~Sarmiento, The polychromate and a chord diagram polynomial, \emph{Ann. Comb.} 4, 227--236 (2000).

\bibitem{SH} X.-L.~Shen and Y.-P.~Hou, Some trees are determined by their Laplacian spectra, \emph{J. Nat. Sci. Hunan Norm. Univ.} 29 (1), 21--24, 46 (2006).

\bibitem{Smith} R. L.~Smith, Some interlacing properties of the Schur complement of a Hermitian matrix, \emph{Linear Algebra Appl.} 177, 137--144 (1992).

\bibitem{Spiel} D. A.~Spielman, The adjacency matrix and the $n$th eigenvalue, {\tt https://www.cs.yale.edu/homes/spielman/561/2012/lect03-12.pdf}

\bibitem{Stanic} Z.~Stani\'c, On determination of caterpillars with four terminal vertices by their Laplacian spectrum, \emph{Linear Algebra Appl.} 431 (11), 2035--2048 (2009).

\bibitem{Stan73} R. P.~Stanley, Acyclic orientations of graphs, \emph{Discrete Math.} 5 (2), 171--198 (1973).

\bibitem{Stan95} R. P.~Stanley, A symmetric function generalization of the chromatic polynomial of a graph, \emph{Adv. Math.} 111 (1), 166--194 (1995).

\bibitem{Stan98} R. P.~Stanley, Graph colorings and related symmetric functions: ideas and applications. A
description of results, interesting applications, \& notable open problems, \emph{Discrete Math.} 193 (1)--(3), 267--286 (1998).

\bibitem{TutChrom} W. T.~Tutte, A contribution to the theory of chromatic polynomials, \emph{Canad. J. Math.} 6, 80--91 (1954).

\bibitem{TutRec} W. T.~Tutte, All the king's horses (a guide to reconstruction), Graph Theory and Related Topics (J. A.~Bondy and U. S. R.~Murty, ed.), \emph{Academic Press}, 15--33 (1979).

\bibitem{WH} F.~Wen and Q.-X.~Huang, On the Laplacian spectral characterization of the generalized $T$-shape trees, \emph{Ars Combin.} 112, 419--431 (2013).

\bibitem{WHHL} F.~Wen, Q.-X.~Huang, X.-Y.~Huang and F.-J.~Liu, On the Laplacian spectral characterization of $\Pi$-shape trees, \emph{Indian J. Pure Appl. Math.} 49, 397--411 (2018).

\bibitem{XFL} S.~Xu, J.~Fang and X.-Y.~Liu, Weighted Laplacian and its theoretical applications, \emph{IOP Conf. Ser.: Mater. Sci. Eng.} 768, 072032 (2020).

\bibitem{ZB} T.~Zhang and Y.-Q.~Bai, Two families of trees determined by their Laplacian spectrum, \emph{Oper. Res. Trans.} 21 (1), 103--110 (2017).
\end{thebibliography}

\end{document}